\newcommand{\bea}{\begin{eqnarray}}
\newcommand{\eea}{\end{eqnarray}}
\newcommand{\g}{\mathfrak g}
\newtheorem{theorem}{Theorem}
\newtheorem{lemma}[theorem]{Lemma}
\newtheorem{proposition}[theorem]{Proposition}
\newtheorem{corollary}[theorem]{Corollary}
\newtheorem{remark}[theorem]{Remark}
\numberwithin{theorem}{section}
\makeatletter \@addtoreset{equation}{section}
\begin{document}

\title[]{  On the representation theory of  the   vertex algebra $L_{-5/2}(sl(4))$ }

\author[]{Dra\v zen  Adamovi\' c}
\address{Department of Mathematics, Faculty of Science \\
	University of Zagreb \\
	Bijeni\v cka 30 \\
    Croatia }

\email{adamovic@math.hr}

\author[]{Ozren Per\v se}
\address{Department of Mathematics, Faculty of Science \\
	University of Zagreb \\
	Bijeni\v cka 30 \\
	Croatia
}

\email{perse@math.hr}

\author[]{Ivana Vukorepa}
\address{Department of Mathematics, Faculty of Science \\
	University of Zagreb \\
	Bijeni\v cka 30 \\
	Croatia
}
\email{vukorepa@math.hr} 

\begin{abstract}
	We  study  the representation theory of non-admissible simple affine vertex algebra $L_{-5/2} (sl(4))$. We determine an explicit formula for the singular vector of conformal weight four in the universal affine vertex algebra
	$V^{-5/2} (sl(4))$, and show that it generates the maximal ideal in $V^{-5/2} (sl(4))$. We classify irreducible $L_{-5/2} (sl(4))$--modules in the category ${\mathcal O}$, and determine the fusion rules between irreducible modules in the category of ordinary modules $KL_{-5/2}$. It turns out that this fusion algebra is isomorphic to the fusion algebra of $KL_{-1}$.	We also prove that $KL_{-5/2}$ is a semi-simple, rigid braided tensor category.
	
	 In our proofs we use the notion of collapsing level for the affine $\mathcal{W}$--algebra, and the properties of conformal embedding $gl(4)  \hookrightarrow sl(5)$ at level $k=-5/2$ from \cite{AKMPP-16}.
	 We show that $k=-5/2$ is a collapsing level with respect to the subregular nilpotent element $f_{subreg}$, meaning that the simple quotient of the affine $\mathcal{W}$--algebra $W^{-5/2}(sl(4), f_{subreg})$ is isomorphic to the Heisenberg vertex algebra $M_J(1)$. We prove certain results on vanishing and non-vanishing of cohomology for the quantum Hamiltonian reduction functor $H_{f_{subreg}}$. It turns out that the properties of $H_{f_{subreg}}$ are more subtle than in the case of minimal reducition.
	  
 \end{abstract}
 \maketitle

\section{Introduction}

Representation theory of simple affine vertex algebra $L_{k}( \mathfrak{g})$, for arbitrary simple Lie algebra $\mathfrak{g}$ and general level $k \in \mathbb{C}$ is a very interesting problem for many years. Some of the best understood cases are non-negative integer levels $k \in \mathbb{Z}_{\geq 0}$ (cf. \cite{FZ}, \cite{Li-local}), and a class of cetrain rational levels, called admissible levels (cf. \cite{AM}, \cite{Ara}, \cite{KW1}, \cite{KW2}), which includes non-negative integers. On the other hand, to the best of our knowledge, there are not many results on the representation theory of $L_{k}( \mathfrak{g})$ for non-admissible (non-generic, non-critical) levels $k \in \mathbb{Q}$. Most of the results obtained thus far are for negative integer levels, since they naturally appear in free-field realizations of certain simple affine vertex algebras (cf. \cite{AP-JAA}),
in the context of affine vertex algebras associated to the Deligne exceptional series (cf. \cite{ArMo-joseph}), and in the context of collapsing levels for minimal affine $\mathcal{W}$--algebras (cf. \cite{AKMPP-IMRN}).

But still there are many open cases where the representation theory  of affine vertex algebras is unknown. Let us mention here the  cases which, in our opinion,  will  play an important role in future analysis of non-admissible affine vertex algebras:
\begin{itemize}
\item[(1)] The affine vertex algebras at negative levels which appeared in the decompositions of conformal embeddings in \cite{AKMPP-16}, \cite{AKMPP-JJM}, \cite{AMPP-AIM}.
    \item[(2)] The affine vertex algebras at collapsing levels for affine $\mathcal{W}$--algebras.
    \end{itemize}
 Based on a recent work   on vertex tensor categories (cf.   \cite{C-vir-ten}, \cite{CY}), one expects that for all vertex algebras appearing in (1) and (2), the category $KL_k$ of ordinary modules will have the structure of a rigid braided tensor category. In the cases of collapsing levels for minimal affine $\mathcal W$--algebras, the paper \cite{AKMPP-IMRN} implies that the category $KL_k$ is semisimple. Using this result,   and the decompositions from (1), the paper \cite{CY}  shows that $KL_k$ has the structure of a rigid braided tensor category. But there are many   cases of non-admissible affine vertex algebras which appeared in (1), and which haven't been discussed  neither   in \cite{AKMPP-IMRN} or in \cite{CY}.  For such vertex algebras, the minimal affine $\mathcal{W}$--algebras are complicated, so for proving the existence of the vertex tensor category structure, one needs to apply different method. It seems that it is natural to investigate quantum Hamiltonian reduction and $\mathcal W_k(\g,f)$ for  non-minimal nilpotent element~$f$. Unfortunately,   the results on vanishing or non-vanishing of  QHR  functor $H_f(\, \cdot \,)$ are not presented so explicitly as in the case of the minimal reduction, so we can not easily generalize methods of \cite{AKMPP-IMRN}. Certain results in this direction have been obtained recently in  \cite{ArE}, \cite{ArEM}, but only in the cases of admissible levels.

In the present paper, we present a case study of a new  example of a   non-admissible affine vertex algebra which belongs to both cases mentioned above.
In particular, we consider the simple affine vertex algebra $L_{-5/2} (sl(4))$. This case is of particular interest to us, since looking at the low rank Lie algebras $\mathfrak{g} = sl(n)$, and using the criterion from \cite{GK}, one obtains that the level $k = -5/2$ for $\widehat{sl(4)}$ is the new example of non-admissible, non-generic half-integer level, which has not been investigated before. Another motivation for studying this case is the conformal embedding of $L_{-5/2} (sl(4)) \otimes M  (1)$ into 
$L_{-5/2} (sl(5))$ (cf. \cite{AKMPP-16}) (here $M (1)$ denotes the Heisenberg vertex algebra associated to abelian Lie algebra of rank one). 

Our first step in understanding the simple vertex algebra $L_{-5/2} (sl(4))$ is determining an explicit formula for a singular vector $v$ of conformal weight $4$ in the universal affine vertex algebra $V^{-5/2} (sl(4))$. This enables us to classify irreducible modules in the category $\mathcal{O}$ for the associated quotient 
$\widetilde{L}_{-5/2} (sl(4)) =V^{-5/2} (sl(4)) / \langle v \rangle$, using the methods from \cite{A-PhD, AM}. The irreducible representations are
$$ L_{-5/2}(\mu_i(t)), \ t\in \mathbb{C}, \ i=1, \dots, 16, $$
 and they are parametrized as a union of $16$ lines in ${\Bbb C}^3$ (cf. Theorem  \ref{kvocijent-moduli}). So there are uncountably  many irreducible modules in the category $\mathcal{O}$. A particularly interesting consequence is that the irreducible representations in the  category $KL_{-5/2}$ of ordinary modules are   $$ \pi_r = L_{-5/2}(r \omega_1),  \ \pi_{-r}= L_{-5/2} (r \omega_3) \quad (r \in {\Bbb Z}_{\ge 0}).$$

Our next goal is to conclude that the vertex algebra $\widetilde{L}_{-5/2} (sl(4))$ is simple, i.e. that the singular vector $v$ generates the maximal ideal in $V^{-5/2} (sl(4))$, what also happens in the case of admissible affine vertex  algebras (cf. \cite{AM}, \cite{Ara}). Instead of proving this claim directly in the affine vertex algebra setting, we use affine $\mathcal{W}$-algebras. A close related  problem is to prove the semi-simplicity of the category of ordinary $L_{-5/2} (sl(4))$--modules.

Similar situation was in the case of affine vertex algebras $V^{-1}(sl(n))$, for $n \ge 3$. The classification of irreducible $L_{-1}(sl(n))$--modules was obtained in \cite{AP-2007, AP-JAA}, by using very similar methods to those which we use in the current paper (only formulas for singular vectors were much simpler).  The description of maximal ideal in $V^{-1}(sl(n))$ was obtained in \cite{ArMo-sheets}, and the semi-simplicity of the category $KL_{-1}$ was proved in \cite{AKMPP-IMRN}. For a review of this approach see Subsection \ref{review-1}. 

We show that the similar approach can be achieved in the case of affine vertex algebra $V^{-5/2}(sl(4))$. First we consider affine $\mathcal{W}$-algebra $W^{-5/2}(sl(4), f_{subreg})$, and show that its simple quotient is isomorphic to the Heisenberg vertex algebra. Then, the simplicity of $\widetilde{L}_{-5/2} (sl(4))$, and 
the semi-simplicity of $KL_{-5/2}$ is proved using the properties of the quantum Hamiltonian reduction functor  $H_{f_{subreg}}$. It turns out that $H_{f_{subreg}}$ has slightly different properties than in the case of the minimal reduction.

 We prove in  Theorem \ref{conj-p}:

\begin{theorem}  $H_{f_{subreg}} (\pi_r) \ne  \{0\}$ for $r\le 0$, but $H_{f_{subreg}} (M) =\{0\}$ for any highest weight module in $KL_{-5/2}$ of weight $n \omega_1$, $ n >0$.  
\end{theorem}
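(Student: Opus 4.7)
The plan is to combine three ingredients established earlier in the paper: (i) the identification of the simple quotient of $W^{-5/2}(sl(4),f_{subreg})$ with the Heisenberg vertex algebra $M_J(1)$; (ii) Euler--Poincaré (graded) character computations for the BRST complex defining $H^\bullet_{f_{subreg}}$; and (iii) the explicit singular vector of conformal weight four in $V^{-5/2}(sl(4))$, which fixes the characters of the $\pi_r$ and constrains their associated varieties.

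For the non-vanishing statement, I would first handle $r=0$ by observing that $H^0_{f_{subreg}}(L_{-5/2}(sl(4)))$ surjects onto the non-zero vertex algebra $M_J(1)$. For $r<0$, i.e.\ $\pi_r = L_{-5/2}(|r|\omega_3)$, the strategy is to compute the Euler--Poincaré character of the BRST complex applied to $\pi_r$ using the character of $\pi_r$ obtained from the classification of irreducibles in ${\mathcal O}$, and to verify that the resulting virtual character is non-zero; this forces at least one cohomology group to be non-zero. As an independent check (and as a way to identify the actual module), one can decompose the reduction of $L_{-5/2}(sl(5))$ along the conformal embedding $L_{-5/2}(gl(4))\hookrightarrow L_{-5/2}(sl(5))$ of \cite{AKMPP-16}: the $\pi_r$ appear as isotypic components, and their reductions should be identifiable with explicit Fock modules for $M_J(1)$ of controlled Heisenberg charge.

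For the vanishing on highest weight modules of weight $n\omega_1$ with $n>0$, I would again start from the Euler--Poincaré character of the BRST complex. The Weyl--Kac-type expansion restricted to the Slodowy slice should exhibit systematic sign cancellation for these weights, reflecting that $n\omega_1$ lies on the ``wrong'' side relative to the grading by $h_{subreg}$. The virtual vanishing must then be upgraded to genuine vanishing in every cohomological degree. For this I would combine (a) Arakawa-type associated-variety arguments, tracking the associated variety of any highest weight module with top $V(n\omega_1)$ using the fact that the singular vector $v$ of weight four annihilates it, and showing this variety avoids the closure of the subregular orbit in a way that forces $H^\bullet_{f_{subreg}}$ to vanish; with (b) the observation that any non-zero $H^i$ would inherit an $M_J(1)$-module structure whose Heisenberg charge is incompatible with input of highest weight $n\omega_1$.

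The main obstacle, flagged in the excerpt itself, is the interplay between $H^0_{f_{subreg}}$ and the higher cohomology groups. Unlike the minimal reduction treated in \cite{AKMPP-IMRN}, where Arakawa's concentration theorem forces $H^i = 0$ for $i\neq 0$, here one expects genuinely non-trivial higher cohomology in some cases, so a pure Euler--Poincaré argument is insufficient for the vanishing half and does not immediately pinpoint the correct degree for the non-vanishing half. The crux of the proof is therefore to leverage the very explicit structure of $W^{-5/2}(sl(4),f_{subreg})$ (with its Heisenberg simple quotient) and of the modules $\pi_r$ (coming from the weight-four singular vector) to control each $H^i_{f_{subreg}}$ individually, rather than appealing to a general vanishing theorem.
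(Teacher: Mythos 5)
Your proposal has a genuine gap in both halves. For the non-vanishing half, you propose to compute the Euler--Poincar\'e character of the BRST complex applied to $\pi_r$ ``using the character of $\pi_r$ obtained from the classification of irreducibles in $\mathcal O$''; but the classification (Theorem \ref{kvocijent-moduli}) gives only the highest weights of the irreducible modules, not their characters, so there is nothing concrete to feed into that computation. For the vanishing half the situation is worse: your obstruction (b), an alleged incompatibility of the Heisenberg charge with input of highest weight $n\omega_1$, cannot work, because $H_{f_{subreg}}(V^{-5/2}(n\omega_1))\neq\{0\}$ (Kac--Wakimoto reduction of generalized Verma modules, cf.\ Remark \ref{general-comments-vanishing}) and its highest weight vector carries the perfectly consistent charge $J(0)=(n\omega_1\,|\,\omega_1)$. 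Vanishing is therefore not a formal weight phenomenon: it occurs only after passing to $\widetilde L_{-5/2}$--modules, i.e.\ after quotienting by $J^{-5/2}\cdot V^{-5/2}(n\omega_1)$, so any correct proof must make the ideal $J^{-5/2}$ act on the module, which your sketch never does. Note also that your worry about higher cohomology is misplaced: the vanishing of $H^i$ for $i\neq 0$ on $KL_k$ (Arakawa, cited in the paper as exactness of $H_{f_{subreg}}$ on $KL_{-5/2}$) holds for arbitrary nilpotent $f$; the genuine subtlety here is that $H^0$ itself may vanish.

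What the paper actually uses, and what your outline is missing, is the explicit degree-two singular vector $w_\nu$ lying in $J^{-5/2}\cdot V^{-5/2}(\mu)$ for $\mu=n\omega_1,\,n\omega_3$ (Lemma \ref{sing-1-dod}, Proposition \ref{prop-formula-singv-mod}). Computing its image in Zhu's $C_2$--algebra modulo $J_\chi$ (Lemma \ref{lem-mapping}) shows that under $H_{f_{subreg}}$ it is sent, for $\mu=n\omega_1$, to the highest weight vector of $H_{f_{subreg}}(V^{k}(n\omega_1))$, so the reduction of the universal module $\overline M(n\omega_1)$ vanishes and, by right-exactness, so does the reduction of every highest weight module of weight $n\omega_1$; while for $\mu=n\omega_3$ it is sent to $G^+(-1)v_\mu^W$, a vector of positive degree, giving $H_{f_{subreg}}(\overline M(n\omega_3))\cong M_J(1,\tfrac{1}{4}n)\neq\{0\}$, and a conformal-weight comparison ($\tfrac{m(m-1)}{4}\neq \tfrac{n(n-1)}{4}$ for $m\neq n$), together with the already-proved vanishing for weights $m\omega_1$, shows the image survives in every quotient, in particular in $\pi_{-n}$. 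Your ingredient (a), the associated-variety route, could in principle replace this for irreducible modules (the paper says as much, citing Arakawa's methods), but you give no way to compute the relevant varieties, and for the universal, non-irreducible modules the only available handle is again the singular vector $w_\nu$ --- exactly the computation your proposal omits.
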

 
 It seems that  in general the non-vanishing of QHR functor for irreducible modules can be determined by the methods from \cite{Ara-11}. Since our proof requires more precise information  for modules which are not neccasary irreducible, we decide to present a different   proof.
 The proof    is given  in Section  \ref{singular-new} and it 
is based on the following key observations:

\begin{itemize}
\item Let $k=-5/2$. The generalized Verma module $V^k(\mu)$ for $\mu=n\omega_1, n\omega_3$, $n >0$, contains a singular vector $w_{\nu}$ of degree $2$, and $w_{\nu}$ belongs to the submodule $J^k \cdot V^{k}(\mu)$ (here $J^k$ denotes the ideal in $V^{-5/2} (sl(4))$ generated by singular vector $v$). Then we construct universal $\widetilde L_k(sl(4))$--modules $\overline M(\mu)$.

\item For $\mu = n \omega_1$, the QHR functor $H_{f_{subreg}}$ sends $w_{\nu}$ to a highest weight vector of $H_{f_{subreg}} (V^{k}(\mu))$, implying the vanishing of cohomology for all highest weight modules in $KL_k$ of the weight $\mu$.

\item For $\mu = n \omega_3$, $w_{\nu}$ is mapped to a singular vector  of degree one in  $H_{f_{subreg}} (V^{k}(\mu))$, which generates a proper submodule. Using this we show that $H_{f_{subreg}} (\overline M(n \omega_3)) \ne \{0\}$. The classification of irreducible $\widetilde L_k(sl(4))$--modules enables us to show  non-vanishing of cohomology   for all highest weight modules in $KL_k$ of the weight $\mu$.
\end{itemize}

 We show in Theorem  \ref{main-max}   how these properties imply that $\widetilde{L}_{-5/2} (sl(4))$ is simple and $KL_{-5/2}$ is  semi-simple.
 
Next, we exploit the conformal embedding of $L_{-5/2} (sl(4)) \otimes M  (1)$ into $L_{-5/2} (sl(5))$ and the result from \cite{AKMPP-16} which says that 
all modules $\pi_r$, $r \in {\Bbb Z}$ are realized inside of $L_{-5/2} (sl(5))$. 
Using that fact, we are able to determine the fusion rules for irreducible $L_{-5/2} (sl(4))$--modules in the category $KL_{-5/2}$ (cf.  Proposition   \ref{fusion-rules}):
\bea \label{fr-uvod} \pi_r \times \pi_s = \pi_{r+s} \quad (r,s \in {\Bbb Z}).  \eea
Then applying results of the new work \cite{CY}, we prove that $KL_{-5/2}$ is a rigid, braided tensor category. 

Let us summarize main results of our paper in the following theorem:
\begin{theorem}Let $k=-5/2$.
\item[(1)] The set $\{ \pi_r \ \vert \  r \in {\Bbb Z}\}$ provides a complete list of irreducible $L_{k}(sl(4))$--modules in the category $KL_{k}$.
\item[(2)] The maximal ideal $J^k$  of $V^k(sl(4))$ is generated by  a singular vector $v$ of degree $4$ and $\g$--weight $2 \omega_2$ (cf. Theorem \ref{thm-singv}).
\item[(3)] $KL_{-5/2}$ is a semi-simple, rigid  braided tensor category with the  fusion rules (\ref{fr-uvod}).
\item[(4)] The set $\{ L_{-5/2} (\mu_i(t)) \ \vert \, i=1, \dots, 16, t \in {\Bbb C}\}$ provides a complete list of irreducible $L_{-5/2}(sl(4))$--modules in the category $\mathcal O$.
\item[(5)] There exist indecomposable $L_{-5/2}(sl(4))$--modules in the category $\mathcal O$ (cf. Remark \ref{remark-ind}).
\end{theorem}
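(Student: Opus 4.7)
The theorem is the paper's summary, so the proposal is to assemble the five items from the individual structural results developed in the paper, rather than to attack any of them head-on. The plan is to proceed in the order (2) $\Rightarrow$ (1),(4) $\Rightarrow$ (3) $\Rightarrow$ (5), because simplicity of $\widetilde L_k(sl(4))$ is what upgrades the classification of irreducibles for the auxiliary quotient to a classification for $L_k(sl(4))$ itself.

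First I would dispose of (2). The singular vector $v$ of conformal weight $4$ and $\mathfrak g$-weight $2\omega_2$ is the one produced explicitly earlier in the paper (Theorem \ref{thm-singv}); let $\langle v\rangle = J^k$ be the ideal it generates in $V^k(sl(4))$, $k=-5/2$. To show $J^k$ is maximal, i.e.\ that $\widetilde L_k(sl(4)):=V^k/J^k$ coincides with $L_k(sl(4))$, I would invoke the collapsing-level result proved in the paper: the simple quotient of $W^k(sl(4),f_{subreg})$ is the Heisenberg vertex algebra $M_J(1)$. Combined with the vanishing/non-vanishing statement for $H_{f_{subreg}}$ (the displayed Theorem \ref{conj-p} / Theorem \ref{main-max}), this forces any non-trivial quotient of $\widetilde L_k(sl(4))$ to be killed by $H_{f_{subreg}}$ on the modules $\pi_r$ with $r\le 0$, contradicting non-vanishing there. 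The same argument simultaneously yields semi-simplicity of $KL_k$ (Theorem \ref{main-max}), which is the input needed for (3).

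With $\widetilde L_k(sl(4))=L_k(sl(4))$ in hand, items (1) and (4) are immediate transfers of the classification for $\widetilde L_k(sl(4))$. For (4), Theorem \ref{kvocijent-moduli} already parametrises the irreducibles in $\mathcal O$ for $\widetilde L_k(sl(4))$ as the $16$ one-parameter families $L_{-5/2}(\mu_i(t))$; by (2) these are precisely the irreducibles in $\mathcal O$ for $L_k(sl(4))$. For (1) I would then select, from the sixteen families, those modules whose highest weight integrates to a finite-dimensional $\mathfrak g$-module; the discussion in the introduction identifies these with $\pi_r=L_{-5/2}(r\omega_1)$ and $\pi_{-r}=L_{-5/2}(r\omega_3)$, $r\in\mathbb Z_{\ge 0}$, giving the complete list $\{\pi_r:r\in\mathbb Z\}$.

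For (3), semi-simplicity of $KL_k$ was already obtained in the proof of (2). It remains to establish the fusion rules and rigidity. I would exploit the conformal embedding $L_{-5/2}(sl(4))\otimes M(1)\hookrightarrow L_{-5/2}(sl(5))$ of \cite{AKMPP-16}: the decomposition of $L_{-5/2}(sl(5))$ as an $L_{-5/2}(sl(4))\otimes M(1)$-module, together with the fact that every $\pi_r$ appears there, lets one read off $\pi_r\times\pi_s=\pi_{r+s}$ by comparing charges of the Heisenberg factor, exactly as recorded in Proposition \ref{fusion-rules}. Rigidity and the braided tensor structure are then inherited by applying the general machinery of \cite{CY} to this semi-simple category with invertible simples, since the fusion group is $\mathbb Z$. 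Finally, (5) is settled by exhibiting a non-split self-extension inside $\mathcal O$ as in Remark \ref{remark-ind}; the construction is a short computation with a reducible generalized Verma module on one of the sixteen lines, so no further heavy lifting is needed.

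The real obstacle in this program is not the assembly but the QHR input behind (2): showing $H_{f_{subreg}}$ vanishes on highest-weight modules of weight $n\omega_1$ with $n>0$ while being non-zero on the $\pi_{-r}$ family. Unlike the minimal reduction used in \cite{AKMPP-IMRN}, the subregular functor is not known to be exact on category $\mathcal O$, so the vanishing has to be tracked through the auxiliary modules $\overline M(\mu)$ and the degree-$2$ singular vector $w_\nu\in J^k\cdot V^k(\mu)$ constructed in Section \ref{singular-new}; this is the step I expect to absorb almost all of the technical effort.
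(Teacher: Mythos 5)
Your assembly coincides with the paper's own proof: item (2) together with semi-simplicity is Theorem \ref{main-max} (resting on the collapsing-level Theorem \ref{collapsing-1}, the QHR dichotomy of Theorem \ref{conj-p}, and the automorphism $\sigma$ of Remark \ref{automorphism}), items (1) and (4) then transfer from Theorem \ref{kvocijent-moduli} and Corollary \ref{klas-quot-KL}, item (3) is Proposition \ref{fusion-rules} plus the rigidity argument via the conformal embedding and \cite{CY}, and item (5) is Remark \ref{remark-ind}. One small correction to your closing remark: $H_{f_{subreg}}$ is exact on $KL_{-5/2}$ (this is cited from \cite{Ara-IMRN} and used in the paper); the genuine subtlety is instead the vanishing of $H_{f_{subreg}}$ on highest weight modules of weight $n\omega_1$, $n>0$, which Section \ref{singular-new} controls through the degree-two singular vectors $w_\nu$ and the modules $\overline M(\mu)$, exactly as you anticipate.
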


We observe that the top components of irreducible $L_{-5/2} (sl(4))$--modules from the category $KL_{-5/2}$ are the same as the top components of irreducible $L_{-1} (sl(4))$--modules from the category $KL_{-1}$, and that the fusion rules for irreducible modules from $KL_{-5/2}$ and $KL_{-1}$  coincide (cf. \cite{AP-JAA}).

We also determine the decompositions of irreducible $L_{-5/2} (sl(5))$--modules in the category $\mathcal{O}$ (note that the level $k = -5/2$ is admissible for $\widehat{sl(5)}$) as $L_{-5/2} (sl(4)) \otimes M  (1)$--modules. 
Furthermore, we show that there is a homomorphism from Zhu's algebra of 
$L_{-5/2} (sl(4))$ to associated Weyl algebra and study certain  $L_{-5/2}( sl(4))$--modules as subquotients of relaxed $L_{-5/2}( sl(5))$--modules.

\vskip 5mm
{\bf Acknowledgements.}
The authors  are  partially supported   by the
QuantiXLie Centre of Excellence, a project cofinanced
by the Croatian Government and European Union
through the European Regional Development Fund - the
Competitiveness and Cohesion Operational Programme
(KK.01.1.1.01.0004).

\section{Preliminaries} \label{sect-prelim}

We assume that the reader is familiar with the notions of vertex algebra (cf. \cite{Bo}, \cite{FHL}, \cite{FLM}, \cite{K2}), simple Lie algebra and affine Kac-Moody algebra (cf. \cite{K1}). In this section we recall some results on affine vertex algebras. We also discuss affine $\mathcal{W}$--algebras following the papers \cite{ArMo-joseph}, \cite{ArMo-sheets}, \cite{KW3}.

\subsection{Affine vertex algebras}
Let $\mathfrak{g}$ be a simple Lie algebra with a triangular decomposition 
$\mathfrak{g} = \mathfrak{n}_{-} \oplus \mathfrak{h} \oplus \mathfrak{n}_{+}$.
Denote by $\left( \cdot \, | \, \cdot \right)$ the invariant bilinear form on $\mathfrak{g}$, normalized by the condition $\left( \theta \, | \, \theta \right)=2$, where $\theta$ is the highest root of $\mathfrak{g}$. We will often identify $\mathfrak{h}^*$ with $\mathfrak{h}$ via $\left( \cdot \, | \, \cdot \right)$. 
For $\mu \in \mathfrak{h}^*$, denote by $V(\mu )$ the irreducible highest weight $\mathfrak{g}$--module with highest weight $\mu$. Let us denote by $\alpha_1, \ldots, \alpha_{\ell}$ simple roots, by $h_1, \ldots, h_{\ell}$ simple co-roots ($h_i= \alpha_{i}^{\vee}$, for $i=1, \ldots , \ell$), and by $\omega _1 , \ldots, \omega_{\ell}$ fundametal weights for $\mathfrak{g}$ ($\ell = \dim \mathfrak{h}$). Let $\hat{\mathfrak{g}}$ be the (untwisted) affinization of $\mathfrak{g}$. Let  $\alpha_0, \alpha_1, \ldots, \alpha_{\ell}$ be simple roots, $\alpha_{0}^{\vee}, \alpha_{1}^{\vee} \ldots, \alpha_{\ell}^{\vee}$ simple co-roots, and $\Lambda _0 , \Lambda _1, \ldots, \Lambda_{\ell}$ fundametal weights for $\hat{\mathfrak{g}}$. For $\mu \in \mathfrak{h}^*$ and $k \in \mathbb{C}$, denote by $L_{k}\left( \mu \right)$ the irreducible highest weight $\hat{\mathfrak{g}}$--module with highest weight $\widehat{\mu} : = k \Lambda _0 + \mu  \in \hat{\mathfrak{h}} ^*$. 

Denote by $V^{k}(\mathfrak{g})$ the universal affine vertex algebra associated to simple Lie algebra $\mathfrak{g}$ and level $k \in \mathbb{C}$, $k \neq -h^{\vee}$, and 
by $L_{k}(\mathfrak{g})$ the unique simple quotient of $V^{k}(\mathfrak{g})$.
For any quotient $V$ of $V^{k}(\mathfrak{g})$, we define category $KL_{k}$ of $V$--modules as in \cite{AKMPP-IMRN}. If $\mathfrak{g}$ is a $1$--dimensional commutative Lie algebra and $k\ne 0$, then $V^{k}(\mathfrak{g})$ is a simple Heisenberg vertex algebra, which we denote by $M(1)$.

Let $v$ be a $\hat{\mathfrak{g}}$--singular vector in $V^{k}(\mathfrak{g})$. Denote by
$\langle v \rangle$ the ideal in $V^{k}(\mathfrak{g})$ generated by $v$ and by
\begin{equation} \label{L-tilde}
	\widetilde{L}_{k}\left( \mathfrak{g}\right)=V^{k}(\mathfrak{g})/ \langle v \rangle
\end{equation}
the associated quotient vertex algebra.

\subsection{Zhu's algebra}
For a vertex algebra $V$, denote by $A(V)$ Zhu's algebra associated to $V$ (cf. \cite{Z}). Denote by $[a]$ the image of $a \in V$ in $A(V)$. We have:

\begin{proposition}\cite[Theorem 2.1.2, Theorem 2.2.1]{Z}  \label{Zhu1}
	\item[(1)] Let $M=\oplus_{n=0}^{\infty}M(n)$ be a $\mathbb{Z}_+$--graded $V$--module. Then $M(0)$ is an $A(V)$--module.
	\item[(2)] Let $W$ be an $A(V)$--module. Then there exists a $\mathbb{Z}_+$--graded  $V$-module $M=\oplus_{n=0}^{\infty}M(n)$ such that $M(0)$ is isomorphic to $W$ as an $A(V)$--module.
\end{proposition}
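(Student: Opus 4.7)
The plan is to follow Zhu's original strategy, which handles the two parts by dual constructions. For part (1), I would assign to each homogeneous $a \in V$ of conformal weight $\mathrm{wt}(a)=n$ its zero mode $o(a) = a_{n-1}$; by the standard grading rule $a_m : M(k) \to M(k-m+n-1)$, this operator preserves each $M(j)$, in particular $M(0)$. The content of the assertion is then that on $M(0)$ one has $o(a * b) = o(a) o(b)$ and $o(a \circ b) = 0$, so the map $[a] \mapsto o(a)|_{M(0)}$ descends to a well-defined action of $A(V) = V/O(V)$. Both identities are proved by the same device: one rewrites
$$o(a * b) = \mathrm{Res}_z \frac{(1+z)^{\mathrm{wt}(a)}}{z}\, Y(a,z)b, \qquad o(a \circ b) = \mathrm{Res}_z \frac{(1+z)^{\mathrm{wt}(a)}}{z^2}\, Y(a,z)b,$$
applied to a vector in $M(0)$, and expands using the Borcherds (Jacobi) identity, discarding all modes $a_m$ with $m \geq n$ since they annihilate $M(0)$ by the grading.

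For part (2), I would build a universal $\mathbb{Z}_+$-graded $V$-module on top of $W$. Following Frenkel--Zhu, introduce the topological associative algebra $U(V) = (V \otimes \mathbb{C}[t,t^{-1}])/\mathrm{Im}(L_{-1} \otimes 1 + 1 \otimes \partial_t)$, with product inherited from the mode products of $V$, and its subalgebra $U(V)_{\geq 0}$ spanned by modes $a_{(m)}$ with $m \geq \mathrm{wt}(a)-1$. There is a natural surjection $U(V)_{\geq 0}/U(V)_{>0} \twoheadrightarrow A(V)$; through it, any $A(V)$-module $W$ becomes a $U(V)_{\geq 0}$-module on which $U(V)_{>0}$ acts by zero. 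I would then form the induced module $\overline{M}(W) = U(V) \otimes_{U(V)_{\geq 0}} W$, equip it with the natural $\mathbb{Z}_+$-grading from $U(V)$, and verify via the Borcherds identity that the $Y$-operators promote to a genuine $\mathbb{Z}_+$-graded $V$-module structure. Quotienting out the maximal graded submodule meeting the degree-zero piece trivially then yields a module whose top component is isomorphic to $W$ as an $A(V)$-module.

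The main obstacle lies in part (2), specifically in showing that $W$ survives intact as the top component of $\overline{M}(W)$ rather than being collapsed by hidden relations of the induction. This amounts to a PBW-type theorem for $U(V)$: one must prove that the composition $A(V) \to U(V)_{\geq 0}/U(V)_{>0}$ is an isomorphism, so that no additional relations are forced on $W$ by the mode algebra. The standard route is a filtration argument, using the commutator formulas in $U(V)$ together with the defining relations of $A(V) = V/O(V)$ to reduce any putative new identity on $W$ to one already implied by its $A(V)$-module structure. Once this compatibility is in hand, parts (1) and (2) combine to establish the expected bijection between graded-irreducible $V$-modules and irreducible $A(V)$-modules, which underlies the classification results used later in the paper.
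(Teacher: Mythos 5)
This proposition is not proved in the paper at all: it is quoted verbatim from Zhu's work \cite{Z} (Theorems 2.1.2 and 2.2.1), so there is no in-paper argument to compare against. Your sketch reproduces, correctly in outline, exactly the strategy of the cited source: zero modes $o(a)=a_{\mathrm{wt}(a)-1}$ acting on the top level and the identities $o(a*b)=o(a)o(b)$, $o(O(V))=0$ on $M(0)$ for part (1), and the induced module over the mode algebra with a quotient recovering $W$ as top component for part (2). Two small caveats: the displayed equations give the elements $a*b$ and $a\circ b$ themselves, not their zero modes, so the notation should be adjusted; and the step you single out as the main obstacle (that the degree-zero piece of the induced/quotient module is $W$ itself, i.e.\ that the mode algebra imposes no relations beyond those of $A(V)$) is indeed the genuine technical core of Zhu's proof and cannot be waved through by a generic filtration remark --- it is precisely what is established in \cite{Z} (and in later clarifications of the Frenkel--Zhu universal enveloping algebra), so for a complete argument you would need to carry out that verification in detail rather than assert it.
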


\begin{proposition}\cite[Theorem 2.2.2]{Z}
	There is a one-to-one correspondence between irreducible $A(V)$--modules and irreducible $\mathbb{Z}_+$--graded $V$--modules.
\end{proposition}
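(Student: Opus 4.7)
The plan is to bootstrap from the previous Proposition, which produces one direction of each arrow (an $A(V)$--module $W$ yields a $\mathbb{Z}_+$--graded $V$--module $M$ with $M(0) \cong W$, and the top component of any $\mathbb{Z}_+$--graded $V$--module is an $A(V)$--module). The task is then to promote these constructions to a bijection on isomorphism classes of irreducible objects.

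First, given an irreducible $A(V)$--module $W$, I would use part (2) of Proposition \ref{Zhu1} to produce a $\mathbb{Z}_+$--graded $V$--module $M = \bigoplus_{n\ge 0} M(n)$ with $M(0) \cong W$. In general $M$ need not be irreducible, so I would consider the sum $N$ of all graded $V$--submodules of $M$ whose intersection with $M(0)$ is zero; $N$ is the unique maximal such submodule, and the quotient $L(W) := M/N$ is a $\mathbb{Z}_+$--graded $V$--module whose top component is still $W$. I would then argue that $L(W)$ is irreducible: any nonzero graded submodule must meet the top component in a nonzero $A(V)$--submodule of the irreducible $W$, hence equal $W$ there, and since $W$ generates $L(W)$ as a $V$--module (by maximality of $N$ combined with the construction in part (2), which takes $M$ to be generated by $M(0)$), this submodule must be all of $L(W)$.

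Conversely, for an irreducible $\mathbb{Z}_+$--graded $V$--module $M$, part (1) says $M(0)$ is an $A(V)$--module. To see $M(0)$ is irreducible, I would let $W_0 \subseteq M(0)$ be a nonzero $A(V)$--submodule, and consider the $V$--submodule $\langle W_0 \rangle \subseteq M$ it generates; irreducibility of $M$ forces $\langle W_0 \rangle = M$, so $W_0 = M(0) \cap \langle W_0 \rangle$ recovers all of $M(0)$ (the content being that the degree--zero part of the $V$--submodule generated by $W_0$ coincides with $W_0$, a standard Zhu computation using the grading on $V$ and the fact that $A(V)$--action on $M(0)$ is realized by zero modes of $V$). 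Finally, I would verify that the two assignments $W \mapsto L(W)$ and $M \mapsto M(0)$ are mutually inverse on isomorphism classes; uniqueness of $L(W)$ up to isomorphism follows because any irreducible graded $V$--module with top component $W$ is a quotient of the universal object provided by part (2).

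The main obstacle is the step "the degree--zero part of the $V$--submodule generated by $W_0$ equals $W_0$." This requires the nontrivial algebraic fact, established in Zhu's original argument, that for any $a \in V$ and any $n \in \mathbb{Z}$ the mode $a_n$ acting between graded components of $M$ can, modulo the Zhu relations $O(V)$, be expressed through the zero--mode action of $A(V)$ on $M(0)$; equivalently, one needs to track how elements of $V$ of positive conformal weight annihilate or preserve $M(0)$ in a controlled fashion. Once this identification is in place, both directions of the correspondence, together with their inverse relationship, follow cleanly from the universal property in Proposition \ref{Zhu1}(2) and the irreducibility argument sketched above.
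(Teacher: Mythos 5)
The paper itself contains no proof of this proposition: it is quoted directly from Zhu \cite[Theorem 2.2.2]{Z}, so the only thing to compare your sketch against is Zhu's original argument --- which is in fact the route you follow. The two assignments you describe (the top-level functor $M \mapsto M(0)$, and in the other direction the quotient $L(W)$ of a graded module with top level $W$ by the unique maximal graded submodule meeting $M(0)$ trivially) are exactly the standard ones, and your irreducibility argument for $L(W)$ is sound, granted that the module furnished by Proposition \ref{Zhu1}(2) is generated by its top level. Note, however, that the statement of Proposition \ref{Zhu1}(2) only asserts the \emph{existence} of some $\mathbb{Z}_+$--graded module with prescribed top component; it gives neither generation by $M(0)$ nor any universal property. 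Both are features of Zhu's explicit induced (``generalized Verma'') construction, not of the statement you are allowed to quote.

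This is where the genuine gap lies, and it is the one you flag yourself: the claim that the degree-zero part of the $V$--submodule generated by an $A(V)$--submodule $W_0 \subseteq M(0)$ is again $W_0$ (equivalently, that modes of $V$ returning to degree zero act through $A(V)$), and the claim that every irreducible $\mathbb{Z}_+$--graded module with top $W$ is a quotient of ``the universal object provided by part (2).'' Without proving the first fact and without constructing the universal object (rather than invoking bare existence), neither the irreducibility of $M(0)$ for irreducible $M$ nor the mutual inverseness of the two assignments is established, so the proposal is an accurate roadmap of Zhu's proof rather than a self-contained argument. Two smaller points worth recording if you flesh this out: the bijection is on isomorphism classes and requires the normalization $M(0) \neq 0$ (otherwise a degree shift produces spurious ``different'' graded modules), and in checking that the sum $N$ of all graded submodules intersecting $M(0)$ trivially again meets $M(0)$ trivially, you should use gradedness of the summands to project to degree zero.
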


It follows from \cite[Theorem 3.1.1]{FZ} that the Zhu's algebra  $A(V^{k}(\mathfrak{g}))$ is isomorphic to universal enveloping algebra $\mathcal{U}(\mathfrak{g})$, where the isomorphism $F \colon A(V^{k}(\mathfrak{g})) \to \mathcal{U}(\mathfrak{g})$ is given by
\begin{equation} \label{Zhu-iso} F ([a_1(-n_1-1) \ldots a_m(-n_m-1)\mathbf{1}]) = (-1)^{n_1+ \cdots + n_m}a_m \ldots a_1,
\end{equation}
for $a_1, \ldots ,a_m \in \mathfrak{g}$ and $n_1, \ldots , n_m \in \mathbb{Z}_{\ge 0}$.
For the quotient vertex algebra $\widetilde{L}_{-5/2}( \mathfrak{g})$ defined by relation (\ref{L-tilde}), the associated Zhu's algebra is 
$$ A(\widetilde{L}_{k}( \mathfrak{g})) \cong \mathcal{U}(\mathfrak{g})/ \langle v' \rangle,$$
where $\langle v' \rangle$ is a two-sided ideal in $\mathcal{U}(\mathfrak{g})$ generated by the vector $v'= F([v])$ (cf. \cite[Proposition 1.4.2]{FZ}). 

We recall the method for classification of irreducible $A(\widetilde{L}_{k}( \mathfrak{g}))$--modules in the category $\mathcal{O}$ from \cite{A-PhD, AM}.
Denote by $_L$ the adjoint action of  $\mathcal{U}(\mathfrak g)$ on
$\mathcal{U}(\mathfrak g)$ defined by $ X_Lf=[X,f]$ for $X \in
\mathfrak g$ and $f \in \mathcal{U}(\mathfrak g)$. Let $R$ be a $\mathcal{U}(\mathfrak g)$--submodule
of $\mathcal{U}(\mathfrak g)$ generated by the vector $v'$.
Clearly, $R$ is an irreducible finite-dimensional $\mathcal{U}(\mathfrak g)$--module. 
Let $R_{0}$ be the zero-weight subspace of $R$.

\begin{proposition}\cite{A-PhD, AM} \label{Class-affine-quotient}
	Let $V(\mu)$ be an irreducible highest weight $\mathcal{U}(\mathfrak{g})$--module with the highest weight vector  $v_{\mu}$, for $\mu \in \mathfrak{h}^*$. The following statements are equivalent:
	\begin{enumerate}
		\item $V(\mu)$ is an $A( \widetilde{L}_{k}( \mathfrak{g}) ) $--module,
		\item $RV(\mu )=0$,
		\item $R_0v_{\mu}=0$.
	\end{enumerate}
\end{proposition}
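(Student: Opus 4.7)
The plan is to handle the cycle of implications with the real work sitting in $(3)\Rightarrow(2)$; $(2)\Rightarrow(3)$ is trivial since $R_0 \subset R$ and $v_\mu \in V(\mu)$.

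For $(1) \Leftrightarrow (2)$, the first step is the algebraic identification
\begin{equation*}
\mathcal{U}(\mathfrak{g})\, R \;=\; R\, \mathcal{U}(\mathfrak{g}) \;=\; \langle v' \rangle.
\end{equation*}
Since $R$ is $\mathrm{ad}$-stable, the commutation $Xr = rX + X_L r$ with $X_L r \in R$ gives $\mathfrak{g} R \subset R\mathfrak{g} + R$ and symmetrically, so iterating along a PBW basis yields $\mathcal{U}(\mathfrak{g}) R = R \mathcal{U}(\mathfrak{g})$; and because $R$ is the $\mathrm{ad}$-orbit of $v'$, this two-sided subspace is the two-sided ideal generated by $v'$. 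Consequently $V(\mu)$ is a module over $A(\widetilde{L}_k(\mathfrak{g})) \cong \mathcal{U}(\mathfrak{g})/\langle v' \rangle$ iff $\langle v' \rangle V(\mu) = 0$ iff $RV(\mu) = 0$.

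For the nontrivial direction $(3)\Rightarrow(2)$, I would first reduce to proving $Rv_\mu = 0$: using the identity above,
\begin{equation*}
R V(\mu) \;=\; R\, \mathcal{U}(\mathfrak{g})\, v_\mu \;=\; \mathcal{U}(\mathfrak{g})\, R\, v_\mu,
\end{equation*}
so $Rv_\mu = 0$ forces $RV(\mu) = 0$. To show $Rv_\mu = 0$ under hypothesis (3), I would induct downward through the (finitely many) weights of $R$. For a weight vector $r \in R_\beta$ and $X \in \mathfrak{n}_+$, using $X v_\mu = 0$,
\begin{equation*}
X(rv_\mu) \;=\; (X_L r) v_\mu + r(Xv_\mu) \;=\; (X_L r) v_\mu.
\end{equation*}
In the base case $\beta = \lambda$ (the highest weight of $R$), we have $X_L r = 0$, so $rv_\mu$ is $\mathfrak{n}_+$-singular in $V(\mu)$. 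Inductively, if $R_{\beta'} v_\mu = 0$ for every weight $\beta' > \beta$ of $R$, then $X_L r \in R_{\beta + \mathrm{wt}(X)}$ gives $(X_L r) v_\mu = 0$, so $rv_\mu$ is again $\mathfrak{n}_+$-singular. Since the only nonzero $\mathfrak{n}_+$-singular vector in the irreducible module $V(\mu)$ is $v_\mu$ up to scalar, either $\mathrm{wt}(rv_\mu) = \mu + \beta \ne \mu$ and $rv_\mu = 0$ by weight reasons, or $\beta = 0$ and $rv_\mu = 0$ directly by hypothesis (3).

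The main subtlety is the initial identification $\mathcal{U}(\mathfrak{g}) R = R \mathcal{U}(\mathfrak{g})$; without it, the hypothesis $R_0 v_\mu = 0$ gives information only about the highest weight line of $V(\mu)$, and one cannot propagate it to all of $V(\mu)$. Once that is in hand, the remaining weight induction is clean, because the commutator identity transfers the $\mathfrak{n}_+$-raising problem from the infinite-dimensional module $V(\mu)$ to the finite-dimensional adjoint module $R$, where the induction terminates at the highest weight $\lambda$.
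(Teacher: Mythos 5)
Your proof is correct, and it follows the standard argument from the cited sources \cite{A-PhD, AM} (the paper itself states this proposition only with a citation and gives no proof): the identification $\mathcal{U}(\mathfrak{g})R = R\,\mathcal{U}(\mathfrak{g}) = \langle v'\rangle$ via $\mathrm{ad}$-stability of $R$, followed by the weight-descending induction showing each $rv_\mu$ is $\mathfrak{n}_+$-singular and hence vanishes unless its weight is $\mu$. All steps, including the reduction $RV(\mu) = \mathcal{U}(\mathfrak{g})Rv_\mu$ and the use of irreducibility of $V(\mu)$ to rule out singular vectors of weight $\mu+\beta$ with $\beta\ne 0$, check out.
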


Denote by $\mathcal{U}(\mathfrak{g} ) _0$ the zero-weight subspace of $\mathcal{U}(\mathfrak{g} )$, and consider the Harish-Chandra projection map 
$$ \mathcal{U}(\mathfrak{g} ) _0 \to \mathcal{U}(\mathfrak{h} )$$
which is the restriction of the projection map
$$\mathcal{U}(\mathfrak{g} ) = \mathcal{U}(\mathfrak{h} ) \oplus (\mathfrak{n}_{-}\mathcal{U}(\mathfrak{g}) + \mathcal{U}(\mathfrak{g}) \mathfrak{n}_{+}) \to \mathcal{U}(\mathfrak{h} )$$
to $\mathcal{U}(\mathfrak{g} ) _0$ (cf. \cite{ArMo-sheets}). 

Let $r \in R_{0}$. Since $R_{0} \subset \mathcal{U}(\mathfrak{g} ) _0$, it follows that there exists the unique polynomial
$p_{r} \in \mathcal{S}( \frak h) = \mathcal{U}( \frak h)$ such that
$$ rv_{\mu}=p_{r}(\mu)v_{\mu}.$$
Set 
\begin{equation} \label{def-polinomi}
 {\mathcal P}_{0}=\{ \ p_{r} \ \vert \ r \in R_{0} \}. 
\end{equation}
We have:

\begin{corollary}\cite{A-PhD, AM} \label{koro-polinomi} There is a one-to-one correspondence between 
	\begin{enumerate} 
		\item irreducible $A(\widetilde{L}_{k}( \mathfrak{g}))$--modules
		in the category $\mathcal{O}$, 
		\item weights $\mu \in {\frak h}^{*}$ such that
		$p(\mu)=0$ for all $p \in {\mathcal P}_{0}$.
	\end{enumerate}
\end{corollary}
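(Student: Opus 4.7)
The plan is to combine Zhu's correspondence (Proposition \ref{Zhu1}) with Proposition \ref{Class-affine-quotient}, and then translate the resulting vanishing condition into polynomial identities via the Harish-Chandra projection. Since irreducible objects of category $\mathcal{O}$ for $\mathcal{U}(\mathfrak{g})$ are precisely the irreducible highest weight modules $V(\mu)$ for $\mu \in \mathfrak{h}^*$, and since $A(\widetilde{L}_{k}(\mathfrak{g})) \cong \mathcal{U}(\mathfrak{g})/\langle v' \rangle$, classifying irreducible $A(\widetilde{L}_{k}(\mathfrak{g}))$--modules in $\mathcal{O}$ is the same as determining those $\mu \in \mathfrak{h}^*$ for which the two-sided ideal $\langle v' \rangle$ annihilates $V(\mu)$. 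Under Proposition \ref{Zhu1} this in turn parametrizes irreducible $\mathbb{Z}_{\ge 0}$--graded $\widetilde{L}_{k}(\mathfrak{g})$--modules whose top component lies in $\mathcal{O}$.

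First I would apply Proposition \ref{Class-affine-quotient}, part (3), to replace the condition $\langle v' \rangle V(\mu)=0$ by the much smaller pointwise condition $R_{0} v_{\mu}=0$ on the highest weight vector. Next, given $r \in R_{0} \subset \mathcal{U}(\mathfrak{g})_{0}$, I would decompose $r$ according to
\[
 \mathcal{U}(\mathfrak{g}) = \mathcal{U}(\mathfrak{h}) \oplus \bigl(\mathfrak{n}_{-}\mathcal{U}(\mathfrak{g}) + \mathcal{U}(\mathfrak{g})\mathfrak{n}_{+}\bigr).
\]
Because $v_{\mu}$ is annihilated by $\mathfrak{n}_{+}$ and $r$ has weight zero, only the $\mathcal{U}(\mathfrak{h})$--component contributes, and this component acts on $v_{\mu}$ by the scalar $p_{r}(\mu)$, with $p_{r}$ precisely the polynomial attached to $r$ through the Harish-Chandra projection and collected in $\mathcal{P}_{0}$ via (\ref{def-polinomi}). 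Consequently, $R_{0} v_{\mu}=0$ is equivalent to $p_{r}(\mu)=0$ for every $r \in R_{0}$, i.e.\ to $p(\mu)=0$ for every $p \in \mathcal{P}_{0}$.

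To finish the bijection I would use that distinct weights produce non-isomorphic irreducible highest weight modules, so the assignment $\mu \mapsto V(\mu)$ identifies the common zero set of $\mathcal{P}_{0}$ in $\mathfrak{h}^{*}$ with the set of isomorphism classes of irreducible $A(\widetilde{L}_{k}(\mathfrak{g}))$--modules in $\mathcal{O}$. There is no substantive obstacle here: the statement is essentially a reformulation of Proposition \ref{Class-affine-quotient} in the coordinates provided by the Harish-Chandra projection. The only point deserving care is confirming that the reduction $rv_{\mu}=p_{r}(\mu)v_{\mu}$ is well-defined and yields the polynomials used to define $\mathcal{P}_{0}$, which follows from uniqueness of the Harish-Chandra decomposition and the highest weight property of $v_{\mu}$.
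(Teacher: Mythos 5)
Your argument is correct and follows exactly the route the paper intends: the corollary is a direct reformulation of Proposition \ref{Class-affine-quotient}(3) via the Harish-Chandra projection, with $R_0 v_\mu = 0$ translated into the vanishing $p(\mu)=0$ for all $p\in\mathcal{P}_0$, which is the standard argument from \cite{A-PhD, AM} that the paper cites. No gaps; the observation that the weight-zero non-Cartan part of $r$ kills $v_\mu$ is precisely the point that makes $rv_\mu = p_r(\mu)v_\mu$ well defined.
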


\subsection{Zhu's $C_2$--algebra} \label{subsect-C2-affine}
For a vertex algebra $V$, denote by $R_V = V/C_2(V)$ Zhu's $C_2$--algebra of $V$ (cf. \cite{Z}). Then 
$$ R_{V^{k}(\mathfrak{g})} \cong \mathcal{S}(\mathfrak{g}) $$
under the algebra isomorphism uniquely determined by 
\begin{equation} \label{C2-map}
	\overline{x(-1)\mathbf{1}} \mapsto x, \quad \mbox{for} \ x \in \mathfrak{g}, 
\end{equation}
where $\overline{w}$ denotes the image of $w$ in $R_{V^{k}(\mathfrak{g})}$, for any $w \in V^{k}(\mathfrak{g})$. For the quotient vertex algebra $\widetilde{L}_{k}( \mathfrak{g})$ defined by relation (\ref{L-tilde}), denote by $v''$ the image of vector $\overline{v}$ under the map (\ref{C2-map}). Then 
$$ R_{\widetilde{L}_{k}( \mathfrak{g})} \cong \mathcal{S}(\mathfrak{g}) / I_{W},  $$
where $W$ is a $\mathfrak{g}$--module generated by $v''$ under the adjoint action, and $I_W$ is the ideal of $\mathcal{S}(\mathfrak{g})$ generated by $W$ (cf. \cite{ArMo-joseph}, \cite{ArMo-sheets}).

\subsection{Affine $\mathcal{W}$--algebras} \label{subsect-W-alg} In this subsection we briefly recall certain results on affine $\mathcal{W}$--algebras (see \cite{KW3} for details).

Let $\mathfrak{g}$ be a simple Lie algebra, and $(x,f)$ a pair of elements of 
$\mathfrak{g}$ such that $f$ is nilpotent, $\mathrm{ad} \, x$ acts semisimply on 
$\mathfrak{g}$ with half-integer eigenvalues: 
\begin{equation} \label{rel-grading}
\mathfrak{g} = \bigoplus _{j \in \frac{1}{2} \mathbb{Z}} \mathfrak{g} _j,	
\end{equation} 
and that this grading is good with respect to $f$. Let $W^k(\g, x, f)$ be the affine $\mathcal{W}$--algebra associated with $\mathfrak{g}$, $x$ and $f$ at level $k$, obtained by generalized Drinfeld-Sokolov reduction. We will assume that $k \neq -h^{\vee}$. Let us denote $W^k(\g, x, f)$ by $W^k(\g, f)$ for short. Thus
$$ W^k(\g, f)= H_f(V^{k}(\mathfrak{g})).$$
Let us also denote by $W_k(\g, f)$ the unique simple quotient of $W^k(\g, f)$.

Let $\{u_1, \ldots , u_d\}$ be a basis of the centralizer 
$\mathfrak{g}^f$, such that $u_i \in \mathfrak{g} _{-j_i}$, for all $i=1, \ldots ,d$,
where $j_i$ are some non-negative half-integers. Then the $\mathcal{W}$--algebra $W^k(\g, f)$ is strongly generated by elements
$$ J^{\{u_i\}} = J^{(u_i)} + \mathrm{(lower \ terms)}, $$
where
$$ J^{(u_i)} = u_i + \mathrm{(charged \ fermionic  \ part)},  $$
such that the conformal weight of $J^{\{u_i\}}$ is equal to $j_i +1$, for $i=1, \ldots ,d$ (see \cite{KW3} for details).

Next, we recall the description of Zhu's $C_2$--algebra $R_{W^k(\g, f)}$ from \cite{DSK}, \cite{Ara-annals} (see also \cite{ArMo-sheets}). Let $ \chi=\left( f \, | \, \cdot \right) \in \g ^*$. Choose a Lagrangian subspace $\mathcal{L} \subset \mathfrak{g} _{\frac{1}{2}}$ and set 
\begin{equation} \label{rel-ideal-J}
\mathfrak{m}=\mathcal{L} \oplus \bigoplus_{j\geq 1} \g _j, \ \ \ \  J_\chi = \sum_{x \in \mathfrak{m} } \mathcal{S}(\g)(x-\chi(x)).
\end{equation} 
Let $M$ be the unipotent subgroup of $G$ corresponding to $\mathfrak{m}$.
\begin{proposition}[\cite{DSK}, \cite{Ara-annals}] \label{prop-C2-W} We have:
$$ R_{W^k(\g, f)} \cong \left( \mathcal{S}(\g)/ J_\chi  \right) ^M. $$
\end{proposition}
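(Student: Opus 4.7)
The plan is to exploit the BRST/Drinfeld--Sokolov realization of $W^k(\g,f)$ and to track it through the $C_2$--functor. Recall that
$$ W^k(\g,f) = H^0(C^\bullet, d_{\mathrm{st}}), $$
where $C^\bullet = V^k(\g) \otimes F^{ch} \otimes F^{ne}$, with $F^{ch}$ the charged ghost vertex superalgebra built from $\mathfrak m$ and $F^{ne}$ the neutral ghost vertex superalgebra built from $\g_{1/2}$, and $d_{\mathrm{st}}$ is the standard BRST differential determined by $f$ and $\chi=(f\,|\,\cdot)$. The first step is to apply the $C_2$--functor to the whole BRST complex. Since $V\mapsto R_V$ is multiplicative on tensor products and sends a fermionic ghost vertex superalgebra to its exterior symbol algebra, one obtains
$$ R_{C^\bullet} \cong \mathcal S(\g)\otimes\Lambda(\mathfrak m\oplus\mathfrak m^*)\otimes\Lambda(\g_{1/2}), $$
equipped with an induced odd derivation $\bar d_{\mathrm{st}}$ whose form can be read off from the explicit cocycle formula for $d_{\mathrm{st}}$.

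Next, I would introduce the Li (equivalently, Kazhdan) filtration on $C^\bullet$, which is compatible with $d_{\mathrm{st}}$, and pass to the associated graded. Under this passage, the induced differential on $R_{C^\bullet}$ becomes the classical Chevalley--Koszul differential for the Hamiltonian reduction of $\g^*\cong\mathcal S(\g)$ by the unipotent group $M$ at the character $\chi$: the Koszul part imposes the moment-map condition $x-\chi(x)=0$ for $x\in\mathfrak m$, while the Chevalley part enforces $M$--invariance. The degree--zero cohomology of this classical complex is, by Kostant's transversality theorem, the coordinate ring of the Slodowy slice $f+\g^e$, which coincides with $(\mathcal S(\g)/J_\chi)^M$; the higher cohomologies vanish because the generators $x-\chi(x)$, $x\in\mathfrak m$, form a regular sequence and the $M$--action on the slice is free.

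Finally, a standard spectral sequence argument, comparing the induced filtration on $H^\bullet(C^\bullet,d_{\mathrm{st}})$ with the cohomology of the associated graded complex, degenerates at $E_1$ thanks to the vanishing just established. This yields
$$ R_{W^k(\g,f)} \cong H^0(R_{C^\bullet},\bar d_{\mathrm{st}}) \cong (\mathcal S(\g)/J_\chi)^M, $$
as claimed. The main obstacle is exactly this collapse of the spectral sequence, i.e.\ ensuring that no higher quantum BRST cohomology survives in the $C_2$--quotient, equivalently, that the $C_2$--functor commutes with Drinfeld--Sokolov reduction. As in \cite{DSK} and \cite{Ara-annals}, this rests on the freeness of the $M$--action on $f+\g^e$ and the exactness of the corresponding Koszul resolution, together with the compatibility of the Li filtration with the BRST grading.
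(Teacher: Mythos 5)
The paper itself contains no proof of this proposition: it is quoted from \cite{DSK} and \cite{Ara-annals}, so the only comparison available is with the arguments in those references, and your outline is essentially that argument — pass the BRST (Drinfeld--Sokolov) complex through the $C_2$/Li-filtration functor, identify the induced differential with the classical Chevalley--Koszul differential for the Hamiltonian reduction of $\g^*$ by $M$ at the character $\chi$, use Kostant-type transversality (in the good-grading form) to compute $H^0\cong\left(\mathcal S(\g)/J_\chi\right)^M\cong\mathbb{C}[f+\g^e]$ and kill the higher cohomology, and then argue degeneration of the spectral sequence; this is exactly how $R_{W^k(\g,f)}$ is identified in \cite{Ara-annals} (see also \cite{Ara-IMRN}). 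Two details of your setup are imprecise, though harmless for the case the paper needs. First, since $\g$ is an honest Lie algebra, $\g_{1/2}$ is even and the neutral sector of the Kac--Roan--Wakimoto complex is a symplectic boson ($\beta\gamma$) system, so its $C_2$ algebra is the symmetric algebra $\mathcal S(\g_{1/2})$, not $\Lambda(\g_{1/2})$; moreover the standard complex takes charged ghosts for $\bigoplus_{j\ge 1/2}\g_j$ together with the neutral ones, or equivalently charged ghosts for $\mathfrak m$ alone, but not both as written (in the paper's situation the good grading (\ref{rel-x-grading}) is even, so $\g_{1/2}=0$ and the discrepancy disappears). Second, Kostant's freeness statement concerns the $M$-action on $\chi+\mathfrak m^{\perp}=\operatorname{Spec}\left(\mathcal S(\g)/J_\chi\right)$, whose quotient is the Slodowy slice $f+\g^e$; the slice itself is a cross-section, not a free $M$-space. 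With these adjustments your plan is consistent with the cited proofs; the genuinely technical step — the vanishing of higher cohomology after passing to the associated graded, hence the collapse of the spectral sequence — is precisely what \cite{Ara-annals} establishes, and your sketch correctly isolates it as the main obstacle rather than proving it.
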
	

\subsection{Affine $\mathcal{W}$--algebra $W^k(sl(4), f_{subreg})$} \label{subsect-W-subreg}
In this subsection
we apply results stated in Subsection \ref{subsect-W-alg} to the
affine $\mathcal{W}$--algebra $W^k(sl(4), f_{subreg})$. This vertex algebra was recently studied in \cite{CL}, \cite{G-2017}.

Let $\mathfrak{g}$ be the simple Lie algebra $sl(4)$, let
\begin{equation} \label{rel-f-subreg}
	f=f_{subreg}=f_{\varepsilon_2-\varepsilon_3} + f_{\varepsilon_3-\varepsilon_4}
\end{equation}
be the subregular nilpotent element, and 
\begin{equation} \label{rel-x-grading}
	x= \frac{1}{4} \left(3 h_1 + 6  h_2 + 5 h_3\right) = \omega _2 + \omega _3
\end{equation}
a semisimple element of $\mathfrak{g}$ which defines a good grading with respect to $f$ (cf. \cite{G-2017}). The associated grading (\ref{rel-grading}) is also even, i.e. $\mathfrak{g} _j =0$ for $j \notin \mathbb{Z}$. In this case $\dim \mathfrak{g}^f =5$, so $W^k(\mathfrak{g}, f_{subreg})$ is strongly generated by $5$ elements,
which we denote by $J, \bar{L}, W, G^+, G^-$, and their conformal weights are $1,2,3,1,3$, respectively.  The explicit OPE for generators of $W^k(\mathfrak{g}, f_{subreg})$ will be given in the Appendix.

 Note that, since  $e_{\varepsilon_1-\varepsilon_2} \in \mathfrak{g}^f _{0}$,  we have the associated generator 
$$G^+ = J^{\{e_{\varepsilon_1-\varepsilon_2}\}} = J^{(e_{\varepsilon_1-\varepsilon_2})}$$
of conformal weight $1$. More details on these generators will be given in Section \ref{sect-collapsing}.

\section{Affine vertex algebra associated to $\widehat{sl(4)}$ at level $-5/2$ }
\label{sect-singular-vector}

In this section we determine an explicit formula for a singular vector of conformal weight four in the affine vertex algebra $V^{-5/2}(sl(4))$. We explicitly determine Zhu's algebra and Zhu's $C_2$--algebra of the associated quotient vertex algebra. 
Throughout this section we denote by $\mathfrak{g}$ the simple Lie algebra $sl(4)$, and use the standard choice of root vectors for $sl(4)$.

\begin{theorem} \label{thm-singv}
	The following vector $v$ is a singular vector of weight $-\frac{5}{2}\Lambda _0 - 4 \delta +  2\omega_2$  in $V^{-5/2}( \mathfrak{g}) $:
	\begin{eqnarray*}
	&&v= e_{\varepsilon_1-\varepsilon_3}(-1)e_{\varepsilon_2-\varepsilon_4}(-3)\mathbf{1}
	+e_{\varepsilon_1-\varepsilon_3}(-3)e_{\varepsilon_2-\varepsilon_4}(-1)\mathbf{1}\\
	&&+\frac{1}{2}e_{\varepsilon_1-\varepsilon_3}(-2)e_{\varepsilon_2-\varepsilon_4}(-2)\mathbf{1}
	-e_{\varepsilon_1-\varepsilon_4}(-1)e_{\varepsilon_2-\varepsilon_3}(-3)\mathbf{1}\\
	&&-e_{\varepsilon_1-\varepsilon_4}(-3)e_{\varepsilon_2-\varepsilon_3}(-1)\mathbf{1}
	-\frac{1}{2}e_{\varepsilon_1-\varepsilon_4}(-2)e_{\varepsilon_2-\varepsilon_3}(-2)\mathbf{1}\\
	&&+e_{\varepsilon_2-\varepsilon_4}(-1)e_{\varepsilon_2-\varepsilon_3}(-2)e_{\varepsilon_1-\varepsilon_2}(-1)\mathbf{1}
	-e_{\varepsilon_2-\varepsilon_4}(-2)e_{\varepsilon_2-\varepsilon_3}(-1)e_{\varepsilon_1-\varepsilon_2}(-1)\mathbf{1}\\
	&&-e_{\varepsilon_1-\varepsilon_3}(-1)e_{\varepsilon_2-\varepsilon_3}(-2)e_{\varepsilon_3-\varepsilon_4}(-1)\mathbf{1}
	-3e_{\varepsilon_1-\varepsilon_3}(-2)e_{\varepsilon_2-\varepsilon_3}(-1)e_{\varepsilon_3-\varepsilon_4}(-1)\mathbf{1}\\
	&&+2e_{\varepsilon_1-\varepsilon_2}(-1)e_{\varepsilon_2-\varepsilon_3}(-1)^2e_{\varepsilon_3-\varepsilon_4}(-1)\mathbf{1}
	-\frac{2}{3}e_{\varepsilon_1-\varepsilon_3}(-1)e_{\varepsilon_2-\varepsilon_4}(-1)h_{2}(-2)\mathbf{1}\\
	&&-e_{\varepsilon_1-\varepsilon_3}(-1)e_{\varepsilon_2-\varepsilon_4}(-2)h_{1}(-1)\mathbf{1}
	-e_{\varepsilon_1-\varepsilon_3}(-1)e_{\varepsilon_2-\varepsilon_4}(-2)h_{2}(-1)\mathbf{1}\\
	&&-e_{\varepsilon_1-\varepsilon_3}(-2)e_{\varepsilon_2-\varepsilon_4}(-1)h_{2}(-1)\mathbf{1}
	-e_{\varepsilon_1-\varepsilon_3}(-2)e_{\varepsilon_2-\varepsilon_4}(-1)h_{3}(-1)\mathbf{1}\\
	&&+\frac{2}{3}e_{\varepsilon_1-\varepsilon_4}(-1)e_{\varepsilon_2-\varepsilon_3}(-1)h_{2}(-2)\mathbf{1}
	+e_{\varepsilon_1-\varepsilon_4}(-1)e_{\varepsilon_2-\varepsilon_3}(-2)h_{1}(-1)\mathbf{1}\\
	&&+e_{\varepsilon_1-\varepsilon_4}(-1)e_{\varepsilon_2-\varepsilon_3}(-2)h_{2}(-1)\mathbf{1}
	+e_{\varepsilon_1-\varepsilon_4}(-1)e_{\varepsilon_2-\varepsilon_3}(-2)h_{3}(-1)\mathbf{1}\\
	&&+e_{\varepsilon_1-\varepsilon_4}(-2)e_{\varepsilon_2-\varepsilon_3}(-1)h_{2}(-1)\mathbf{1}
	+\frac{2}{3}e_{\varepsilon_1-\varepsilon_3}(-1)e_{\varepsilon_2-\varepsilon_4}(-1)h_{1}(-1)h_{2}(-1)\mathbf{1}\\
	&&-\frac{2}{3}e_{\varepsilon_1-\varepsilon_3}(-1)e_{\varepsilon_2-\varepsilon_4}(-1)h_{1}(-1)h_{3}(-1)\mathbf{1}
	+\frac{2}{3}e_{\varepsilon_1-\varepsilon_3}(-1)e_{\varepsilon_2-\varepsilon_4}(-1)h_{2}(-1)^2\mathbf{1}\\
	&&+\frac{2}{3}e_{\varepsilon_1-\varepsilon_3}(-1)e_{\varepsilon_2-\varepsilon_4}(-1)h_{2}(-1)h_{3}(-1)\mathbf{1}
	-\frac{2}{3}e_{\varepsilon_1-\varepsilon_4}(-1)e_{\varepsilon_2-\varepsilon_3}(-1)h_{1}(-1)h_{2}(-1)\mathbf{1}\\
	&&-\frac{4}{3}e_{\varepsilon_1-\varepsilon_4}(-1)e_{\varepsilon_2-\varepsilon_3}(-1)h_{1}(-1)h_{3}(-1)\mathbf{1}
	-\frac{2}{3}e_{\varepsilon_1-\varepsilon_4}(-1)e_{\varepsilon_2-\varepsilon_3}(-1)h_{2}(-1)^2\mathbf{1}\\
	&&-\frac{2}{3}e_{\varepsilon_1-\varepsilon_4}(-1)e_{\varepsilon_2-\varepsilon_3}(-1)h_{2}(-1)h_{3}(-1)\mathbf{1}
	-\frac{2}{3}e_{\varepsilon_1-\varepsilon_3}(-1)e_{\varepsilon_2-\varepsilon_4}(-1)e_{\varepsilon_1-\varepsilon_2}(-1)f_{\varepsilon_1-\varepsilon_2}(-1)\mathbf{1}\\
	&&+\frac{4}{3}e_{\varepsilon_1-\varepsilon_3}(-1)e_{\varepsilon_2-\varepsilon_4}(-1)e_{\varepsilon_1-\varepsilon_3}(-1)f_{\varepsilon_1-\varepsilon_3}(-1)\mathbf{1}
	+\frac{4}{3}e_{\varepsilon_1-\varepsilon_3}(-1)e_{\varepsilon_2-\varepsilon_4}(-1)e_{\varepsilon_1-\varepsilon_4}(-1)f_{\varepsilon_1-\varepsilon_4}(-1)\mathbf{1}\\
	&&+\frac{4}{3}e_{\varepsilon_1-\varepsilon_3}(-1)e_{\varepsilon_2-\varepsilon_4}(-1)e_{\varepsilon_2-\varepsilon_3}(-1)f_{\varepsilon_2-\varepsilon_3}(-1)\mathbf{1}
	+\frac{4}{3}e_{\varepsilon_1-\varepsilon_3}(-1)e_{\varepsilon_2-\varepsilon_4}(-1)^2f_{\varepsilon_2-\varepsilon_4}(-1)\mathbf{1}\\
	&&-\frac{2}{3}e_{\varepsilon_1-\varepsilon_3}(-1)e_{\varepsilon_2-\varepsilon_4}(-1)e_{\varepsilon_3-\varepsilon_4}(-1)f_{\varepsilon_3-\varepsilon_4}(-1)\mathbf{1}
	+\frac{2}{3}e_{\varepsilon_1-\varepsilon_4}(-1)e_{\varepsilon_2-\varepsilon_3}(-1)e_{\varepsilon_1-\varepsilon_2}(-1)f_{\varepsilon_1-\varepsilon_2}(-1)\mathbf{1}\\
	&&-\frac{4}{3}e_{\varepsilon_1-\varepsilon_4}(-1)e_{\varepsilon_2-\varepsilon_3}(-1)e_{\varepsilon_1-\varepsilon_3}(-1)f_{\varepsilon_1-\varepsilon_3}(-1)\mathbf{1}
	-\frac{4}{3}e_{\varepsilon_1-\varepsilon_4}(-1)e_{\varepsilon_2-\varepsilon_3}(-1)e_{\varepsilon_1-\varepsilon_4}(-1)f_{\varepsilon_1-\varepsilon_4}(-1)\mathbf{1}\\
	&&-\frac{4}{3}e_{\varepsilon_1-\varepsilon_4}(-1)e_{\varepsilon_2-\varepsilon_3}(-1)^2f_{\varepsilon_2-\varepsilon_3}(-1)\mathbf{1}
	-\frac{4}{3}e_{\varepsilon_1-\varepsilon_4}(-1)e_{\varepsilon_2-\varepsilon_3}(-1)e_{\varepsilon_2-\varepsilon_4}(-1)f_{\varepsilon_2-\varepsilon_4}(-1)\mathbf{1}\\
	&&+\frac{2}{3}e_{\varepsilon_1-\varepsilon_4}(-1)e_{\varepsilon_2-\varepsilon_3}(-1)e_{\varepsilon_3-\varepsilon_4}(-1)f_{\varepsilon_3-\varepsilon_4}(-1)\mathbf{1}
	-2e_{\varepsilon_2-\varepsilon_4}(-1)e_{\varepsilon_2-\varepsilon_3}(-1)e_{\varepsilon_1-\varepsilon_2}(-1)h_{3}(-1)\mathbf{1}\\
	&&+2e_{\varepsilon_1-\varepsilon_3}(-1)e_{\varepsilon_2-\varepsilon_3}(-1)e_{\varepsilon_3-\varepsilon_4}(-1)h_{1}(-1)\mathbf{1}
	+2e_{\varepsilon_1-\varepsilon_3}(-1)e_{\varepsilon_1-\varepsilon_4}(-1)f_{\varepsilon_1-\varepsilon_2}(-1)h_{3}(-1)\mathbf{1}\\
	&&+e_{\varepsilon_1-\varepsilon_3}(-1)e_{\varepsilon_1-\varepsilon_4}(-2)f_{\varepsilon_1-\varepsilon_2}(-1)\mathbf{1}
	-e_{\varepsilon_1-\varepsilon_3}(-2)e_{\varepsilon_1-\varepsilon_4}(-1)f_{\varepsilon_1-\varepsilon_2}(-1)\mathbf{1}\\
	&&-2e_{\varepsilon_1-\varepsilon_3}(-1)^2e_{\varepsilon_3-\varepsilon_4}(-1)f_{\varepsilon_1-\varepsilon_2}(-1)\mathbf{1}
	-2e_{\varepsilon_1-\varepsilon_4}(-1)e_{\varepsilon_2-\varepsilon_4}(-1)f_{\varepsilon_3-\varepsilon_4}(-1)h_{1}(-1)\mathbf{1}\\
	&&+e_{\varepsilon_1-\varepsilon_4}(-1)e_{\varepsilon_2-\varepsilon_4}(-2)f_{\varepsilon_3-\varepsilon_4}(-1)\mathbf{1}
	-e_{\varepsilon_1-\varepsilon_4}(-2)e_{\varepsilon_2-\varepsilon_4}(-1)f_{\varepsilon_3-\varepsilon_4}(-1)\mathbf{1}\\
	&&-2e_{\varepsilon_2-\varepsilon_4}(-1)^2e_{\varepsilon_1-\varepsilon_2}(-1)f_{\varepsilon_3-\varepsilon_4}(-1)\mathbf{1}
	+2e_{\varepsilon_1-\varepsilon_4}(-1)^2f_{\varepsilon_1-\varepsilon_2}(-1)f_{\varepsilon_3-\varepsilon_4}(-1) \mathbf{1}.
\end{eqnarray*}
\end{theorem}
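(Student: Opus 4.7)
The plan is to establish the theorem by direct computation, verifying the three defining properties of a singular vector in the universal affine vertex algebra $V^{-5/2}(sl(4))$: the $\mathfrak{h}$-weight, the conformal weight, and the annihilation by the positive part of $\widehat{sl(4)}$. The weight and conformal degree are immediate from inspection: every PBW monomial appearing in $v$ has total creation-mode degree $n_1+\cdots+n_r = 4$ and $\mathfrak{g}$-weight equal to $2\omega_2$. For example the leading term $e_{\varepsilon_1-\varepsilon_3}(-1)e_{\varepsilon_2-\varepsilon_4}(-3)\mathbf{1}$ contributes conformal weight $1+3=4$ and $\mathfrak{g}$-weight $(\varepsilon_1-\varepsilon_3)+(\varepsilon_2-\varepsilon_4)=2\omega_2$, and the same holds term-by-term for all remaining summands (including those involving the weight-zero factors $h_i(-n)$). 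Combined with the fact that $v$ lives at level $-5/2$, this gives the claimed $\widehat{\mathfrak{h}}$-weight $-\tfrac{5}{2}\Lambda_0 - 4\delta + 2\omega_2$.

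For the singular vector property, the task is to show $\hat{\mathfrak{n}}_+ \cdot v = 0$. By standard affine Lie algebra theory it suffices to verify annihilation by a minimal generating set of $\hat{\mathfrak{n}}_+$, namely $e_{\alpha_i}(0)$ for the three finite simple roots $i=1,2,3$, together with the affine generator $f_\theta(1)$ for the highest root $\theta=\varepsilon_1-\varepsilon_4$. Each of these acts as a linear operator on the finite-dimensional weight subspace of $V^{-5/2}(sl(4))$ of conformal weight $4$ and $\mathfrak{g}$-weight $2\omega_2$. The verification is carried out using the affine bracket $[x(m),y(n)]=[x,y](m+n)+m\delta_{m+n,0}(x|y)K$ at $K=-5/2$: expand the action of each of the four operators on each summand of $v$, reorder the result in PBW normal form, and check that all coefficients vanish.

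A more constructive route, which also explains the coefficients in the displayed formula, is to \emph{search} for the singular vector by parametrizing a general element of the $(2\omega_2,4)$-weight space of $V^{-5/2}(sl(4))$ in a PBW basis with undetermined rational coefficients, imposing the four annihilation conditions $e_{\alpha_i}(0)v=0$ and $f_\theta(1)v=0$, and solving the resulting homogeneous linear system over $\mathbb{Q}$. Kac--Kazhdan-type predictions at $k=-5/2$ lead one to expect that the solution space is one-dimensional, and the vector displayed in the statement is the (unique up to scalar) generator; this simultaneously produces the explicit coefficients and certifies the singular vector property.

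The main obstacle is purely computational: the weight space in question has dimension on the order of several hundred PBW monomials, and the candidate $v$ itself has roughly fifty summands, so both the search and the verification are infeasible by hand. In practice the calculation is performed in a computer algebra system equipped with an affine vertex algebra or Lie algebra package (such as SageMath, Maple, or a dedicated implementation of normal ordering for $V^k(\mathfrak{g})$), where it reduces to finite-dimensional linear algebra over $\mathbb{Q}$. Once the vanishing of $e_{\alpha_i}(0)v$ for $i=1,2,3$ and of $f_\theta(1)v$ is checked symbolically, the theorem follows.
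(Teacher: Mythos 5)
Your proposal is correct and follows essentially the same route as the paper: the authors also prove the theorem by direct verification of $e_{\varepsilon_i-\varepsilon_{i+1}}(0)v=0$ for $i=1,2,3$ and $f_\theta(1)v=0$, the weight and conformal degree being read off from the PBW monomials. The extra remarks you make about finding $v$ via an undetermined-coefficients search are consistent with, but not needed for, the verification itself.
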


\begin{proof}
	Direct verification of relations $e_{\varepsilon_i-\varepsilon_{i+1}}(0).v=0$ for $i=1,2,3$ and $f_\theta(1).v=0$.
\end{proof}

\begin{remark} The expression for singular vector  in Theorem \ref{thm-singv} was also known by P. Moseneder Frajria and P. Papi.
\end{remark}
\begin{remark}
As a $\hat{\mathfrak{g}}$--module, $V^{-5/2}( \mathfrak{g})$ is clearly a quotient of the Verma module $M(-\frac{5}{2} \Lambda_0)$. Thus, when looking for singular vectors in $V^{-5/2}( \mathfrak{g})$, it is natural to consider the projection of singular vectors from $M(-\frac{5}{2} \Lambda_0)$ onto $V^{-5/2}( \mathfrak{g})$. Using the results from \cite{KK}, one can easily verify that there exists a singular vector of weight $-\frac{5}{2}\Lambda _0 - 4 \delta +  2\omega_2$ in $M(-\frac{5}{2} \Lambda_0)$. But using the formula for that singular vector from \cite{MFF}, one easily obtains that it projects to zero in 
$V^{-5/2}( \mathfrak{g})$. Thus, the vector $v$ from Theorem \ref{thm-singv} is a subsingular (or primitive) vector in $M(-\frac{5}{2} \Lambda_0)$ with the same weight $-\frac{5}{2}\Lambda _0 - 4 \delta +  2\omega_2$ as the singular 
vector in $M(-\frac{5}{2} \Lambda_0)$. 
\end{remark}

Let us denote by 
$$\widetilde{L}_{-5/2}( \mathfrak{g})=V^{-5/2}( \mathfrak{g})/ \langle v \rangle $$
the associated quotient vertex algebra.
\begin{remark} \label{automorphism}
	Vertex algebra $V^{-5/2}( \mathfrak{g}) $ has an order two automorphism $\sigma$ which is lifted from the automorphism of the Dynkin diagram of $\mathfrak{g}$, defined by:
	$$\sigma(\alpha_1) = \alpha_3, \ \sigma(\alpha_2) = \alpha_2, \ \sigma(\alpha_3) =  \alpha_1.$$ 
	One easily checks that $\sigma (v) =v$ for the singular vector $v$ from Theorem \ref{thm-singv}. This implies that $\sigma$ induces an automorphism of $\widetilde{L}_{-5/2}(\mathfrak{g})$.  This fact will be used in Section \ref{sect-collapsing}. 
\end{remark}

In the next propositon we determine Zhu's algebra of the vertex algebra $\widetilde{L}_{-5/2}( \mathfrak{g})$.

\begin{proposition} \label{Zhu-algebra}
Zhu's algebra $A( \widetilde{L}_{-5/2}( \mathfrak{g}))$ is isomorphic to $\mathcal{U}(\mathfrak{g})/\langle v' \rangle$, where $\langle v' \rangle$ is a two-sided ideal in $\mathcal{U}(\mathfrak{g})$ generated by the following vector $v'$:
\begin{eqnarray*}
	&&v'=\frac{5}{2}e_{\varepsilon_2-\varepsilon_4}e_{\varepsilon_1-\varepsilon_3} - \frac{5}{2}e_{\varepsilon_2-\varepsilon_3}e_{\varepsilon_1-\varepsilon_4} + 4e_{\varepsilon_3-\varepsilon_4}e_{\varepsilon_2-\varepsilon_3}e_{\varepsilon_1-\varepsilon_3} + 2e_{\varepsilon_3-\varepsilon_4}e_{\varepsilon_2-\varepsilon_3}^2e_{\varepsilon_1-\varepsilon_2}\\
	&&+\frac{8}{3}h_{2}e_{\varepsilon_2-\varepsilon_4}e_{\varepsilon_1-\varepsilon_3} + h_{1}e_{\varepsilon_2-\varepsilon_4}e_{\varepsilon_1-\varepsilon_3} + h_{3}e_{\varepsilon_2-\varepsilon_4}e_{\varepsilon_1-\varepsilon_3}\\
	&&-\frac{8}{3}h_{2}e_{\varepsilon_2-\varepsilon_3}e_{\varepsilon_1-\varepsilon_4} - h_{1}e_{\varepsilon_2-\varepsilon_3}e_{\varepsilon_1-\varepsilon_4} - h_{3}e_{\varepsilon_2-\varepsilon_3}e_{\varepsilon_1-\varepsilon_4}\\
	&&+\frac{2}{3}h_{1}h_{2}e_{\varepsilon_2-\varepsilon_4}e_{\varepsilon_1-\varepsilon_3} - \frac{2}{3}h_{1}h_{3}e_{\varepsilon_2-\varepsilon_4}e_{\varepsilon_1-\varepsilon_3}+\frac{2}{3}h_{2}^2e_{\varepsilon_2-\varepsilon_4}e_{\varepsilon_1-\varepsilon_3}
	+\frac{2}{3}h_{2}h_{3}e_{\varepsilon_2-\varepsilon_4}e_{\varepsilon_1-\varepsilon_3}\\
	&&-\frac{2}{3}h_{1}h_{2}e_{\varepsilon_2-\varepsilon_3}e_{\varepsilon_1-\varepsilon_4} - \frac{4}{3}h_{1}h_{3}e_{\varepsilon_2-\varepsilon_3}e_{\varepsilon_1-\varepsilon_4}-\frac{2}{3}h_{2}^2e_{\varepsilon_2-\varepsilon_3}e_{\varepsilon_1-\varepsilon_4} - \frac{2}{3}h_{2}h_{3}e_{\varepsilon_2-\varepsilon_3}e_{\varepsilon_1-\varepsilon_4}\\
	&&-\frac{2}{3}f_{\varepsilon_1-\varepsilon_2}e_{\varepsilon_1-\varepsilon_2}e_{\varepsilon_2-\varepsilon_4}e_{\varepsilon_1-\varepsilon_3} + \frac{4}{3}f_{\varepsilon_1-\varepsilon_3}e_{\varepsilon_1-\varepsilon_3}e_{\varepsilon_2-\varepsilon_4}e_{\varepsilon_1-\varepsilon_3} + 
	\frac{4}{3}f_{\varepsilon_1-\varepsilon_4}e_{\varepsilon_1-\varepsilon_4}e_{\varepsilon_2-\varepsilon_4}e_{\varepsilon_1-\varepsilon_3}\\
	&&+\frac{4}{3}f_{\varepsilon_2-\varepsilon_3}e_{\varepsilon_2-\varepsilon_3}e_{\varepsilon_2-\varepsilon_4}e_{\varepsilon_1-\varepsilon_3} +
	\frac{4}{3}f_{\varepsilon_2-\varepsilon_4}e_{\varepsilon_2-\varepsilon_4}^2e_{\varepsilon_1-\varepsilon_3} - 
	\frac{2}{3}f_{\varepsilon_3-\varepsilon_4}e_{\varepsilon_3-\varepsilon_4}e_{\varepsilon_2-\varepsilon_4}e_{\varepsilon_1-\varepsilon_3}\\
	&&+\frac{2}{3}f_{\varepsilon_1-\varepsilon_2}e_{\varepsilon_1-\varepsilon_2}e_{\varepsilon_2-\varepsilon_3}e_{\varepsilon_1-\varepsilon_4} -
	\frac{4}{3}f_{\varepsilon_1-\varepsilon_3}e_{\varepsilon_1-\varepsilon_3}e_{\varepsilon_2-\varepsilon_3}e_{\varepsilon_1-\varepsilon_4} -
	\frac{4}{3}f_{\varepsilon_1-\varepsilon_4}e_{\varepsilon_1-\varepsilon_4}e_{\varepsilon_2-\varepsilon_3}e_{\varepsilon_1-\varepsilon_4}\\
	&&-\frac{4}{3}f_{\varepsilon_2-\varepsilon_3}e_{\varepsilon_2-\varepsilon_3}^2e_{\varepsilon_1-\varepsilon_4} -
	\frac{4}{3}f_{\varepsilon_2-\varepsilon_4}e_{\varepsilon_2-\varepsilon_4}e_{\varepsilon_2-\varepsilon_3}e_{\varepsilon_1-\varepsilon_4} +
	\frac{2}{3}f_{\varepsilon_3-\varepsilon_4}e_{\varepsilon_3-\varepsilon_4}e_{\varepsilon_2-\varepsilon_3}e_{\varepsilon_1-\varepsilon_4}\\
	&&-2h_{3}e_{\varepsilon_1-\varepsilon_2}e_{\varepsilon_2-\varepsilon_3}e_{\varepsilon_2-\varepsilon_4} + 
	2h_{1}e_{\varepsilon_3-\varepsilon_4}e_{\varepsilon_2-\varepsilon_3}e_{\varepsilon_1-\varepsilon_3} + 
	2h_{3}f_{\varepsilon_1-\varepsilon_2}e_{\varepsilon_1-\varepsilon_4}e_{\varepsilon_1-\varepsilon_3}\\
	&&-2f_{\varepsilon_1-\varepsilon_2}e_{\varepsilon_3-\varepsilon_4}e_{\varepsilon_1-\varepsilon_3}^2 -
	2h_{1}f_{\varepsilon_3-\varepsilon_4}e_{\varepsilon_2-\varepsilon_4}e_{\varepsilon_1-\varepsilon_4} -
	2f_{\varepsilon_3-\varepsilon_4}e_{\varepsilon_1-\varepsilon_2}e_{\varepsilon_2-\varepsilon_4}^2 + 
	2f_{\varepsilon_3-\varepsilon_4}f_{\varepsilon_1-\varepsilon_2}e_{\varepsilon_1-\varepsilon_4}^2.
\end{eqnarray*}

\end{proposition}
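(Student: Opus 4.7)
The plan is to derive the formula for $v'$ mechanically from the Frenkel--Zhu isomorphism $F\colon A(V^{-5/2}(\mathfrak{g})) \to \mathcal{U}(\mathfrak{g})$ given by equation (\ref{Zhu-iso}), combined with the standard fact (cited from \cite[Proposition~1.4.2]{FZ}) that Zhu's algebra of a quotient of a vertex algebra by the ideal generated by a singular vector is the quotient of the ambient Zhu's algebra by the two-sided ideal generated by the image of that singular vector. Granting these two inputs, the proof reduces to computing $v' := F([v])$ for the singular vector $v$ of Theorem \ref{thm-singv}, and comparing the result with the stated expression.

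Concretely, I will apply (\ref{Zhu-iso}) term-by-term. For each summand of $v$ of the form $a_1(-n_1-1)\cdots a_m(-n_m-1)\mathbf{1}$ appearing in the expansion of $v$, the image under $F$ is $(-1)^{n_1+\cdots+n_m}\,a_m\cdots a_1$ in $\mathcal{U}(\mathfrak{g})$, so the recipe is: reverse the order of the factors, drop the mode indices, and insert the sign $(-1)^{\sum_i n_i}$. Since every mode in $v$ lies in $\{-1,-2,-3\}$, the only non-trivial signs come from $(-2)$-modes, while $(-1)$- and $(-3)$-modes contribute $+1$. As a representative check, the three leading terms
\[
e_{\varepsilon_1-\varepsilon_3}(-1)e_{\varepsilon_2-\varepsilon_4}(-3)\mathbf{1}
+ e_{\varepsilon_1-\varepsilon_3}(-3)e_{\varepsilon_2-\varepsilon_4}(-1)\mathbf{1}
+ \tfrac{1}{2}e_{\varepsilon_1-\varepsilon_3}(-2)e_{\varepsilon_2-\varepsilon_4}(-2)\mathbf{1}
\]
collapse under $F$ to $(1+1+\tfrac12)\,e_{\varepsilon_2-\varepsilon_4}e_{\varepsilon_1-\varepsilon_3} = \tfrac{5}{2}\,e_{\varepsilon_2-\varepsilon_4}e_{\varepsilon_1-\varepsilon_3}$, matching the leading coefficient in $v'$. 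The analogous computation applies to each of the remaining groupings in $v$: the block anti-symmetric in the pair of roots $(\varepsilon_1-\varepsilon_3,\varepsilon_2-\varepsilon_4)\leftrightarrow(\varepsilon_1-\varepsilon_4,\varepsilon_2-\varepsilon_3)$, the terms containing a single Cartan mode $h_i(-2)$, the terms with products of two Cartan modes at $(-1)$, the cubic terms in the root vectors, and finally the quartic terms. After reversing orders and collecting like monomials in $\mathcal{U}(\mathfrak{g})$ one obtains, term-by-term, the explicit expression for $v'$ displayed in the statement.

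No conceptual obstacle arises: $F$ is already an algebra isomorphism, so no non-trivial commutator corrections appear beyond the reversal of factors prescribed by (\ref{Zhu-iso}), and in particular no reordering inside $\mathcal{U}(\mathfrak{g})$ is required to identify the answer with the displayed form of $v'$. The only genuine effort is the bookkeeping across the roughly forty monomials comprising $v$, so the main pitfall is arithmetic rather than structural: one must track the rational coefficients, the signs coming from the $(-2)$-modes, and the reversed operator orderings consistently. Since the formula for $v$ was itself produced with the aid of direct verification of $e_{\alpha_i}(0).v = 0$ and $f_\theta(1).v = 0$ in Theorem \ref{thm-singv}, this computation may be carried out by the same symbolic routines that produced $v$, guaranteeing reliability.
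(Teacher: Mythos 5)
Your proposal is correct and follows essentially the same route as the paper: the paper's proof simply notes that $v'=F([v])$ via relation (\ref{Zhu-iso}) and then invokes \cite[Proposition 1.4.2, Theorem 3.1.1]{FZ}, which is precisely your term-by-term computation plus the quotient statement. Your sign bookkeeping (only $(-2)$-modes contribute $-1$) and the sample check of the leading coefficient $\tfrac{5}{2}$ are consistent with the displayed formula, so nothing further is needed.
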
  
\begin{proof}
	Using relation (\ref{Zhu-iso}), one easily obtains that $v'= F([v])$. The claim now follows from \cite[Proposition 1.4.2, Theorem 3.1.1]{FZ}.
\end{proof}

The description of Zhu's $C_2$--algebra of vertex algebra $\widetilde{L}_{-5/2}( \mathfrak{g})$ follows from the results stated in Subsection \ref{subsect-C2-affine}.
\begin{proposition} \label{C2-algebra}
Zhu's $C_2$--algebra $R_{\widetilde{L}_{-5/2}( \mathfrak{g})}$ is isomorphic to $\mathcal{S}(\mathfrak{g}) / I_{W}$, where $I_W$ is the ideal of $\mathcal{S}(\mathfrak{g})$ generated by $W$, and $W$ is a $\mathfrak{g}$--module generated under the adjoint action by the following vector $v''$:
\begin{eqnarray*}
	&&v''=2e_{\varepsilon_3-\varepsilon_4}e_{\varepsilon_2-\varepsilon_3}^2e_{\varepsilon_1-\varepsilon_2} + \frac{2}{3}(h_1h_2-h_1h_3+h_2^2+h_2h_3)e_{\varepsilon_2-\varepsilon_4}e_{\varepsilon_1-\varepsilon_3}\\
	&&+\frac{2}{3}(-h_1h_2-2h_1h_3-h_2^2-h_2h_3)e_{\varepsilon_2-\varepsilon_3}e_{\varepsilon_1-\varepsilon_4}\\
	&&+\frac{2}{3}\Big(-f_{\varepsilon_1-\varepsilon_2}e_{\varepsilon_1-\varepsilon_2} +2f_{\varepsilon_1-\varepsilon_3}e_{\varepsilon_1-\varepsilon_3}   +2f_{\varepsilon_1-\varepsilon_4}e_{\varepsilon_1-\varepsilon_4} +2f_{\varepsilon_2-\varepsilon_3}e_{\varepsilon_2-\varepsilon_3}  \\
	&& \quad \quad +2f_{\varepsilon_2-\varepsilon_4}e_{\varepsilon_2-\varepsilon_4} 
	-f_{\varepsilon_3-\varepsilon_4}e_{\varepsilon_3-\varepsilon_4} \Big)     e_{\varepsilon_2-\varepsilon_4}e_{\varepsilon_1-\varepsilon_3}\\
	&&+\frac{2}{3}\Big(f_{\varepsilon_1-\varepsilon_2}e_{\varepsilon_1-\varepsilon_2} -2f_{\varepsilon_1-\varepsilon_3}e_{\varepsilon_1-\varepsilon_3}   -2f_{\varepsilon_1-\varepsilon_4}e_{\varepsilon_1-\varepsilon_4} -2f_{\varepsilon_2-\varepsilon_3}e_{\varepsilon_2-\varepsilon_3} \\
	&& \quad \quad -2f_{\varepsilon_2-\varepsilon_4}e_{\varepsilon_2-\varepsilon_4} 
	+f_{\varepsilon_3-\varepsilon_4}e_{\varepsilon_3-\varepsilon_4} \Big)     e_{\varepsilon_2-\varepsilon_3}e_{\varepsilon_1-\varepsilon_4}\\
	&&-2h_3e_{\varepsilon_1-\varepsilon_2}e_{\varepsilon_2-\varepsilon_3}e_{\varepsilon_2-\varepsilon_4} + 2h_1e_{\varepsilon_3-\varepsilon_4}e_{\varepsilon_2-\varepsilon_3}e_{\varepsilon_1-\varepsilon_3} + 2h_3f_{\varepsilon_1-\varepsilon_2}e_{\varepsilon_1-\varepsilon_4}e_{\varepsilon_1-\varepsilon_3}\\
	&&-2f_{\varepsilon_1-\varepsilon_2}e_{\varepsilon_3-\varepsilon_4}e_{\varepsilon_1-\varepsilon_3}^2  -2h_1f_{\varepsilon_3-\varepsilon_4}e_{\varepsilon_2-\varepsilon_4}e_{\varepsilon_1-\varepsilon_4} -2f_{\varepsilon_3-\varepsilon_4}e_{\varepsilon_1-\varepsilon_2}e_{\varepsilon_2-\varepsilon_4}^2 + 2f_{\varepsilon_3-\varepsilon_4}f_{\varepsilon_1-\varepsilon_2}e_{\varepsilon_1-\varepsilon_4}^2.
	\end{eqnarray*}
\end{proposition}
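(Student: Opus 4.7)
The plan is to reduce the claim to a purely symbolic computation on the singular vector $v$ from Theorem \ref{thm-singv}, using the general description of $R_{\widetilde{L}_k(\mathfrak{g})}$ recalled in Subsection~\ref{subsect-C2-affine}. By that description, we already know
$$R_{\widetilde{L}_{-5/2}(\mathfrak{g})} \;\cong\; \mathcal{S}(\mathfrak{g})/I_W,$$
where $W\subset\mathcal{S}(\mathfrak{g})$ is the $\mathfrak{g}$-submodule generated under the adjoint action by the image of $\bar v$ under the isomorphism (\ref{C2-map}). So the only thing left to establish is the explicit formula for $v''$.

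The key observation is that any monomial in $V^{-5/2}(\mathfrak{g})$ containing a creation operator $x(-n)$ with $n\ge 2$ belongs to $C_2(V^{-5/2}(\mathfrak{g}))$ and hence maps to zero in $R_{V^{-5/2}(\mathfrak{g})}\cong\mathcal{S}(\mathfrak{g})$. The computation of $v''$ therefore amounts to discarding, in the expression for $v$ in Theorem \ref{thm-singv}, every summand that involves at least one mode $(-n)$ with $n\ge 2$, and then applying (\ref{C2-map}) to identify the remaining operators $x(-1)\mathbf{1}$ with the corresponding element $x\in\mathfrak{g}\subset\mathcal{S}(\mathfrak{g})$ (with commutativity in $\mathcal{S}(\mathfrak{g})$ making the ordering of factors irrelevant).

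I would carry this out as follows. First, I would sort the summands of $v$ into two lists: those whose generators all carry mode $-1$, and those containing at least one mode $\le -2$. All summands of the second type are in $C_2$ and contribute nothing. The surviving summands fall into four natural blocks: (i) the cubic term $2\,e_{\varepsilon_1-\varepsilon_2}(-1)e_{\varepsilon_2-\varepsilon_3}(-1)^2e_{\varepsilon_3-\varepsilon_4}(-1)\mathbf{1}$; (ii) the quartic terms of the form (Cartan)(Cartan)(root)(root) producing the $h_ih_j\,e_{\varepsilon_2-\varepsilon_4}e_{\varepsilon_1-\varepsilon_3}$ and $h_ih_j\,e_{\varepsilon_2-\varepsilon_3}e_{\varepsilon_1-\varepsilon_4}$ contributions; (iii) the quartic terms of the form $f_\alpha e_\alpha\,e_{\varepsilon_2-\varepsilon_4}e_{\varepsilon_1-\varepsilon_3}$ and its $\varepsilon_2\leftrightarrow\varepsilon_3$ counterpart; and (iv) the remaining quartic tails, which yield precisely the last line of the displayed $v''$. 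Under (\ref{C2-map}), each surviving monomial $x_{i_1}(-1)\cdots x_{i_m}(-1)\mathbf{1}$ maps to $x_{i_1}\cdots x_{i_m}\in\mathcal{S}(\mathfrak{g})$, and matching the four blocks against the four groups of terms in the stated formula for $v''$ completes the computation.

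There is no conceptual difficulty: the structural statement is entirely provided by Subsection \ref{subsect-C2-affine}, and the only real task is the bookkeeping of surviving summands and their coefficients. The mild pitfall to watch for is that (\ref{Zhu-iso}) reverses the order of factors when descending to Zhu's algebra, but for Zhu's $C_2$-algebra we pass to the symmetric algebra, so orderings cause no trouble. Once every term of $v$ is classified into the $C_2$-null or surviving category, the stated expression for $v''$ is read off directly, and the proposition follows.
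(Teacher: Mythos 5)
Your proposal is correct and coincides with the paper's (largely implicit) argument: the structural isomorphism $R_{\widetilde{L}_{-5/2}(\mathfrak{g})}\cong\mathcal{S}(\mathfrak{g})/I_W$ is exactly what Subsection \ref{subsect-C2-affine} provides, and the formula for $v''$ is obtained, as you describe, by discarding every summand of $v$ containing a mode $x(-n)$ with $n\ge 2$ (all such terms lie in $C_2$) and reading off the remaining all-$(-1)$ monomials under the map (\ref{C2-map}). Your bookkeeping of the surviving blocks reproduces the stated $v''$, so nothing is missing.
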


\begin{remark} 
	In what follows, we will prove that the vertex algebra $\widetilde{L}_{-5/2}( \mathfrak{g})$ is simple (see Section \ref{sect-collapsing}), so Proposition \ref{Zhu-algebra} and Proposition \ref{C2-algebra} will give the descriptions of Zhu's algebra $A(L_{-5/2}( \mathfrak{g}))$ and Zhu's $C_2$--algebra $R_{L_{-5/2}( \mathfrak{g})}$ of the simple vertex algebra $L_{-5/2}( \mathfrak{g})$.
\end{remark}

The following lemma will be useful in Section \ref{sect-collapsing}.
\begin{lemma} \label{lem-singv-modJ}
 Let $f_{subreg}$ be the subregular nilpotent element defined by relation 
(\ref{rel-f-subreg}), $x$ a semisimple element defined by (\ref{rel-x-grading}), 
and $J_\chi$ the associated ideal in $\mathcal{S}(\mathfrak{g})$
defined by relation (\ref{rel-ideal-J}). Then
$$ v'' \equiv 2e_{\varepsilon_1-\varepsilon_2} (\mathrm{mod} \ J_\chi). $$
\end{lemma}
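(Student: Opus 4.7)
The plan is a direct term-by-term reduction of $v''$ modulo $J_\chi$ after making the ideal $J_\chi$ completely explicit.

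First I would pin down the grading and the ideal. Using $x = \tfrac{1}{4}(3h_1 + 6h_2 + 5h_3)$ one computes $\alpha_1(x)=0$, $\alpha_2(x)=1$, $\alpha_3(x)=1$, so the grading is even and the positive-degree part decomposes as
\[
 \mathfrak{g}_1 = \langle e_{\varepsilon_2-\varepsilon_3},\, e_{\varepsilon_3-\varepsilon_4},\, e_{\varepsilon_1-\varepsilon_3}\rangle,\qquad
 \mathfrak{g}_2 = \langle e_{\varepsilon_2-\varepsilon_4},\, e_{\varepsilon_1-\varepsilon_4}\rangle.
\]
Since $\mathfrak{g}_{1/2}=0$ we can take $\mathcal{L}=0$, hence $\mathfrak{m} = \mathfrak{g}_1\oplus \mathfrak{g}_2$. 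Evaluating $\chi = (f_{\varepsilon_2-\varepsilon_3}+f_{\varepsilon_3-\varepsilon_4}\,|\,\cdot)$ on this basis gives $\chi(e_{\varepsilon_2-\varepsilon_3})=\chi(e_{\varepsilon_3-\varepsilon_4})=1$ and zero on the three remaining generators. Therefore $J_\chi$ is the ideal of $\mathcal{S}(\mathfrak{g})$ generated by
\[
 e_{\varepsilon_2-\varepsilon_3}-1,\quad e_{\varepsilon_3-\varepsilon_4}-1,\quad e_{\varepsilon_1-\varepsilon_3},\quad e_{\varepsilon_2-\varepsilon_4},\quad e_{\varepsilon_1-\varepsilon_4}.
\]

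Next I would scan the expression for $v''$ in Proposition \ref{C2-algebra} and observe that every summand except the leading one $2\, e_{\varepsilon_3-\varepsilon_4} e_{\varepsilon_2-\varepsilon_3}^2 e_{\varepsilon_1-\varepsilon_2}$ contains at least one of the factors $e_{\varepsilon_1-\varepsilon_3}$, $e_{\varepsilon_2-\varepsilon_4}$, or $e_{\varepsilon_1-\varepsilon_4}$: the two blocks of lines with coefficient $\tfrac{2}{3}$ each carry a factor $e_{\varepsilon_2-\varepsilon_4}e_{\varepsilon_1-\varepsilon_3}$ or $e_{\varepsilon_2-\varepsilon_3}e_{\varepsilon_1-\varepsilon_4}$, while every term on the last two lines carries $e_{\varepsilon_2-\varepsilon_4}$, $e_{\varepsilon_1-\varepsilon_3}$, or $e_{\varepsilon_1-\varepsilon_4}$ explicitly (e.g. $e_{\varepsilon_1-\varepsilon_3}^2$ in the $f_{\varepsilon_1-\varepsilon_2}e_{\varepsilon_3-\varepsilon_4}e_{\varepsilon_1-\varepsilon_3}^2$ term, $e_{\varepsilon_2-\varepsilon_4}^2$ in the $f_{\varepsilon_3-\varepsilon_4}e_{\varepsilon_1-\varepsilon_2}e_{\varepsilon_2-\varepsilon_4}^2$ term, etc.). Since $\mathcal{S}(\mathfrak{g})$ is commutative and these three generators lie in $J_\chi$, every such summand is congruent to $0$ modulo $J_\chi$.

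Finally, for the surviving summand, the congruences $e_{\varepsilon_2-\varepsilon_3}\equiv 1$ and $e_{\varepsilon_3-\varepsilon_4}\equiv 1$ modulo $J_\chi$ give
\[
 2\, e_{\varepsilon_3-\varepsilon_4} e_{\varepsilon_2-\varepsilon_3}^2 e_{\varepsilon_1-\varepsilon_2} \;\equiv\; 2\, e_{\varepsilon_1-\varepsilon_2} \pmod{J_\chi},
\]
which yields the claim. There is no real obstacle here beyond bookkeeping, since $v''$ has been written out explicitly and the bulk of its terms is visibly annihilated by the three $e_\alpha$'s in $J_\chi$; the only mild care required is to notice the even structure of the grading (so that $\mathcal{L}=0$) and to inspect every monomial of $v''$ for at least one factor in $\{e_{\varepsilon_1-\varepsilon_3}, e_{\varepsilon_2-\varepsilon_4}, e_{\varepsilon_1-\varepsilon_4}\}$.
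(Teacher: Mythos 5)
Your proof is correct and follows essentially the same route as the paper's: identify the even grading (so $\mathcal{L}=0$ and $\mathfrak{m}=\mathfrak{g}_1\oplus\mathfrak{g}_2$), compute $\chi$ on the generators of $\mathfrak{m}$, and reduce $v''$ term by term, with only the leading monomial $2e_{\varepsilon_3-\varepsilon_4}e_{\varepsilon_2-\varepsilon_3}^2e_{\varepsilon_1-\varepsilon_2}$ surviving as $2e_{\varepsilon_1-\varepsilon_2}$. You simply spell out the bookkeeping that the paper leaves to the reader, and all your computations (the values $\alpha_i(x)$, the generators of $J_\chi$, and the scan of the monomials of $v''$) check out.
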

\begin{proof}
Recall that the grading on $\mathfrak{g}$ given by the eigenspaces of $\mathrm{ad} \, x$ is even. In particular, $\mathfrak{g} _{\frac{1}{2}}=0$. Furthermore, 
\begin{eqnarray*}
	&& \g _1 =\mathrm{span}_{\mathbb{C}} \{ e_{\varepsilon_1-\varepsilon_3}, e_{\varepsilon_2-\varepsilon_3}, e_{\varepsilon_3-\varepsilon_4} \},\\
	&& \g _2=\mathrm{span}_{\mathbb{C}} \{e_{\varepsilon_1-\varepsilon_4}, e_{\varepsilon_2-\varepsilon_4}\},
\end{eqnarray*}
and $\mathfrak{g} _j =0$ for $j > 2$. Since $\chi=\left( f_{\varepsilon_2-\varepsilon_3}+f_{\varepsilon_3-\varepsilon_4} \, | \, \cdot \right)$, we obtain 
\begin{eqnarray} \label{rel-chi}
	&& \chi(e_{\varepsilon_2-\varepsilon_3})=\chi(e_{\varepsilon_3-\varepsilon_4})=1, \nonumber \\
	&&\chi(e_{\varepsilon_1-\varepsilon_3})=\chi(e_{\varepsilon_1-\varepsilon_4})=\chi(e_{\varepsilon_2-\varepsilon_4})=0.
\end{eqnarray}
Now, relations (\ref{rel-chi}) and the formula for $v''$ from Proposition \ref{C2-algebra} easily imply that 
$$ v'' \equiv 2e_{\varepsilon_1-\varepsilon_2} (\mathrm{mod} \ J_\chi). $$
\end{proof}

\section{Classification of irreducible $\widetilde{L}_{-5/2}(sl(4))$--modules in the category $\mathcal{O}$} \label{sect-class-quotient}

Let $\mathfrak{g}=sl(4)$. In this section we use Zhu's theory and the classification method stated in Proposition \ref{Class-affine-quotient} and Corollary \ref{koro-polinomi} to study the representation theory of the vertex algebra $\widetilde{L}_{-5/2}(\mathfrak{g})$, defined in Section~\ref{sect-singular-vector}. In particular, we obtain the classification of irreducible modules in the category ${\mathcal O}$ for that vertex algebra.

In the next lemma we determine a basis of the space of polynomials ${\mathcal P}_{0}$
defined by relation (\ref{def-polinomi}).

\begin{lemma} \label{lema-polinomi} We have
	$$ {\mathcal P}_{0} = \mbox{span}_{\mathbb{C}} \{p_1, p_2  \},$$
	where \begin{equation*} 
		\begin{aligned}
			&p_1(h)=-\frac{5}{2}h_2 - \frac{7}{2}h_1h_2 + \frac{3}{2}h_1h_3 - \frac{7}{2}h_2h_3 - \frac{31}{6}{h_2}^2 - \frac{13}{3}h_1{h_2}^2 - {h_1}^2h_2 - 2h_1h_2h_3 -\\
			&- \frac{10}{3}{h_2}^3 + h_1{h_3}^2 + {h_1}^2h_3 - \frac{13}{3}{h_2}^2h_3 - h_2{h_3}^2 - \frac{4}{3}h_1{h_2}^3 - \frac{2}{3}{h_1}^2{h_2}^2 +\\
			&+ \frac{2}{3}{h_1}^2{h_3}^2 - \frac{4}{3}{h_1}{h_2}^2h_3 - \frac{2}{3}{h_2}^4 - \frac{4}{3}{h_2}^3h_3 - \frac{2}{3}{h_2}^2{h_3}^2,
		\end{aligned}
	\end{equation*}
	and
	\begin{equation*} 
		\begin{aligned}
			&p_2(h)=\frac{5}{2}h_2 + \frac{7}{2}h_1h_2 + \frac{7}{2}h_2h_3 + \frac{31}{6}{h_2}^2 + \frac{13}{3}h_1{h_2}^2 + {h_1}^2h_2 + \frac{16}{3}h_1h_2h_3 +\\
			&+ \frac{10}{3}{h_2}^3 + \frac{13}{3}{h_2}^2h_3 + h_2{h_3}^2 + \frac{4}{3}h_1{h_2}^3 + \frac{2}{3}{h_1}^2{h_2}^2 + \frac{4}{3}{h_1}^2h_2h_3 +\\
			&+ \frac{8}{3}{h_1}{h_2}^2h_3 + \frac{4}{3}h_1h_2{h_3}^2 + \frac{2}{3}{h_2}^4 + \frac{4}{3}{h_2}^3h_3 + \frac{2}{3}{h_2}^2{h_3}^2.
		\end{aligned}
	\end{equation*}

\end{lemma}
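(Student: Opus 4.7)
The plan is to identify $R$ as an explicit finite--dimensional $\mathfrak g$--module, establish the dimension of $R_0$, and then compute the Harish--Chandra projections of two representatives that span $R_0$.

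First, since $v$ is an $\hat{\mathfrak g}$--singular vector of classical weight $2\omega_2$ and the Zhu isomorphism (\ref{Zhu-iso}) intertwines the zero--mode action of $\mathfrak g$ on singular vectors with the adjoint action on $\mathcal U(\mathfrak g)$, the image $v' = F([v])$ is an $\mathrm{ad}$--highest--weight vector of weight $2\omega_2$. The cyclic submodule $R$ generated from $v'$ under the $_L$--action is then a homomorphic image of the irreducible $\mathfrak g$--module $V(2\omega_2)$, and because $v'\ne 0$ we obtain $R \cong V(2\omega_2)$. Using the decomposition $S^2(V(\omega_2)) = V(2\omega_2) \oplus V(0)$ (or directly Freudenthal's formula), one finds $\dim V(2\omega_2)_0 = 2$, so $\dim R_0 = 2$ and hence $\dim \mathcal P_0 \le 2$.

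Next I would produce two explicit elements of $R_0$ by applying words in negative root vectors whose total $\mathfrak h^*$--weight is $-2\omega_2 = -(\alpha_1+2\alpha_2+\alpha_3)$; two convenient choices are
\[
r_1 = (f_{\varepsilon_1-\varepsilon_3})_L (f_{\varepsilon_2-\varepsilon_4})_L\, v', \qquad r_2 = (f_{\varepsilon_1-\varepsilon_4})_L (f_{\varepsilon_2-\varepsilon_3})_L\, v'.
\]
For each $r_i$ one computes the associated polynomial $p_{r_i}$ via the Harish--Chandra projection $\mathcal U(\mathfrak g)_0 = \mathcal U(\mathfrak h)\oplus(\mathfrak n_-\mathcal U(\mathfrak g)+\mathcal U(\mathfrak g)\mathfrak n_+) \to \mathcal U(\mathfrak h)$: PBW--reorder every monomial so that negative--root factors are on the left and positive--root factors on the right, discard the summands with any nontrivial $\mathfrak n_\pm$ factor, and read off what remains as an element of $\mathcal U(\mathfrak h) = \mathcal S(\mathfrak h) = \mathbb C[h_1,h_2,h_3]$. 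The two resulting polynomials of degree four will be linearly independent (one may verify this by looking at the linear term $\pm\tfrac{5}{2}h_2$), and together with the upper bound $\dim\mathcal P_0 \le 2$ this yields $\mathcal P_0 = \operatorname{span}\{p_{r_1},p_{r_2}\}$. A direct comparison of coefficients then identifies this span, after a linear change of basis if necessary, with $\operatorname{span}\{p_1, p_2\}$.

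The main obstacle is purely computational: $v'$ has on the order of forty PBW monomials, and each of the two adjoint actions must be expanded by the Leibniz rule and then PBW--reordered, tracking every nested commutator, including the internal $h_i h_j$, $f_\alpha e_\alpha$, and $h_i e_\alpha$ pieces. This is naturally carried out with a computer algebra system. The automorphism $\sigma$ of Remark \ref{automorphism} provides a useful consistency check: it fixes $v'$ and also fixes both of the words defining $r_1,r_2$, so both $p_{r_1}$ and $p_{r_2}$ must be symmetric under $h_1 \leftrightarrow h_3$, which is indeed manifest in the explicit expressions for $p_1$ and $p_2$.
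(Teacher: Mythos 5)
Your proposal follows the paper's proof essentially verbatim: identify $R\cong V(2\omega_2)$, observe $\dim R_0=2$ so $\dim\mathcal P_0\le 2$, apply the same two lowering words (your $r_1,r_2$ coincide with the paper's $v_1,v_2$ since $f_{\varepsilon_1-\varepsilon_3}$ commutes with $f_{\varepsilon_2-\varepsilon_4}$ and $f_{\varepsilon_1-\varepsilon_4}$ with $f_{\varepsilon_2-\varepsilon_3}$), compute the Harish--Chandra projections by PBW reordering, and conclude from the linear independence of the two resulting polynomials. The only difference is that the paper records the projection of each monomial of $v'$ explicitly whereas you delegate this bookkeeping to a computer algebra system, adding the $\sigma$-symmetry consistency check, which is sound.
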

\begin{proof}
	The $\mathcal{U}(\mathfrak{g})$--submodule $R$ of $\mathcal{U}(\mathfrak{g})$ generated by vector $v'$ (from Proposition \ref{Zhu-algebra}) under the adjoint action is clearly isomorphic to $V(2\omega_2)$. One easily obtains that dim$R_0=2$, which implies that dim${\mathcal P}_{0} \leq 2$. Let us denote
	\begin{align*}
		v_1=& \left( f_{\varepsilon_2-\varepsilon_4}f_{\varepsilon_1-\varepsilon_3} \right)_Lv', \\
		v_2=& \left( f_{\varepsilon_2-\varepsilon_3}f_{\varepsilon_1-\varepsilon_4}\right)_Lv'. 
	\end{align*}
	Then, the vectors $v_1$ and $v_2$ linearly span $R_0$. 
	
	Let us prove that $p_1 \in {\mathcal P}_{0}$. By direct calculation we determine the action of $\left(f_{\varepsilon_2-\varepsilon_4}f_{\varepsilon_1-\varepsilon_3} \right)_L$ on each monomial appearing in the formula for $v'$ from Proposition \ref{Zhu-algebra}:
	\begin{eqnarray*}
		&&\left(f_{\varepsilon_2-\varepsilon_4}f_{\varepsilon_1-\varepsilon_3}\right)_L e_{\varepsilon_2-\varepsilon_4}e_{\varepsilon_1-\varepsilon_3} \in  - h_1 h_2 - h_1 h_3 -h_2^2 - h_2 h_3 + \mathcal{U}(\mathfrak{g}) \mathfrak{n}_{+}\\
		&&
		\left(f_{\varepsilon_2-\varepsilon_4}f_{\varepsilon_1-\varepsilon_3}\right)_L h_1 e_{\varepsilon_2-\varepsilon_4}e_{\varepsilon_1-\varepsilon_3} \in  - h_1^2 h_2 - h_1^2 h_3 - h_1 h_2^2 - h_1 h_2 h_3 + \mathcal{U}(\mathfrak{g}) \mathfrak{n}_{+}\\
		&&
		\left(f_{\varepsilon_2-\varepsilon_4}f_{\varepsilon_1-\varepsilon_3}\right)_L h_2 e_{\varepsilon_2-\varepsilon_4}e_{\varepsilon_1-\varepsilon_3} \in  - h_1 h_2^2 - h_1 h_2 h_3 -h_2^3 - h_2^2 h_3 + \mathcal{U}(\mathfrak{g}) \mathfrak{n}_{+}\\
		&&
		\left(f_{\varepsilon_2-\varepsilon_4}f_{\varepsilon_1-\varepsilon_3}\right)_L
		h_3 e_{\varepsilon_2-\varepsilon_4}e_{\varepsilon_1-\varepsilon_3} \in  - h_1 h_2 h_3 - h_1 h_3^2 -h_2^2 h_3 - h_2 h_3^2 + \mathcal{U}(\mathfrak{g}) \mathfrak{n}_{+}\\
		&&
		\left(f_{\varepsilon_2-\varepsilon_4}f_{\varepsilon_1-\varepsilon_3}\right)_L h_1 h_2 e_{\varepsilon_2-\varepsilon_4}e_{\varepsilon_1-\varepsilon_3} \in  - h_1^2 h_2^2 - h_1^2 h_2 h_3 - h_1 h_2^3 - h_1 h_2^2 h_3 + \mathcal{U}(\mathfrak{g}) \mathfrak{n}_{+} \\
		&&
		\left(f_{\varepsilon_2-\varepsilon_4}f_{\varepsilon_1-\varepsilon_3}\right)_L h_1 h_3 e_{\varepsilon_2-\varepsilon_4}e_{\varepsilon_1-\varepsilon_3} \in  - h_1^2 h_2 h_3 - h_1^2 h_3^2 - h_1 h_2^2 h_3 - h_1 h_2 h_3^2 + \mathcal{U}(\mathfrak{g}) \mathfrak{n}_{+}\\
		&&
		\left(f_{\varepsilon_2-\varepsilon_4}f_{\varepsilon_1-\varepsilon_3}\right)_L h_2 ^2 e_{\varepsilon_2-\varepsilon_4}e_{\varepsilon_1-\varepsilon_3} \in  - h_1 h_2^3 - h_1 h_2^2 h_3 -h_2^4 - h_2^3 h_3 + \mathcal{U}(\mathfrak{g}) \mathfrak{n}_{+}\\
		&&
		\left(f_{\varepsilon_2-\varepsilon_4}f_{\varepsilon_1-\varepsilon_3}\right)_L h_2 h_3 e_{\varepsilon_2-\varepsilon_4}e_{\varepsilon_1-\varepsilon_3} \in  - h_1 h_2^2 h_3 - h_1 h_2 h_3^2 -h_2^3 h_3 - h_2^2 h_3^2 + \mathcal{U}(\mathfrak{g}) \mathfrak{n}_{+}\\
		&&
		\left(f_{\varepsilon_2-\varepsilon_4}f_{\varepsilon_1-\varepsilon_3}\right)_L 
		e_{\varepsilon_2-\varepsilon_3}e_{\varepsilon_1-\varepsilon_4} \in  h_2 + \mathcal{U}(\mathfrak{g}) \mathfrak{n}_{+}\\
		&&
		\left(f_{\varepsilon_2-\varepsilon_4}f_{\varepsilon_1-\varepsilon_3}\right)_L 
		h_1 e_{\varepsilon_2-\varepsilon_3}e_{\varepsilon_1-\varepsilon_4} \in h_1 h_2 + \mathcal{U}(\mathfrak{g}) \mathfrak{n}_{+} \\
		&&
		\left(f_{\varepsilon_2-\varepsilon_4}f_{\varepsilon_1-\varepsilon_3}\right)_L 
		h_2 e_{\varepsilon_2-\varepsilon_3}e_{\varepsilon_1-\varepsilon_4} \in h_2^2 + \mathcal{U}(\mathfrak{g}) \mathfrak{n}_{+} \\
		&&
		\left(f_{\varepsilon_2-\varepsilon_4}f_{\varepsilon_1-\varepsilon_3}\right)_L 
		h_3 e_{\varepsilon_2-\varepsilon_3}e_{\varepsilon_1-\varepsilon_4} \in h_2 h_3 + \mathcal{U}(\mathfrak{g}) \mathfrak{n}_{+} \\
		&&
		\left(f_{\varepsilon_2-\varepsilon_4}f_{\varepsilon_1-\varepsilon_3}\right)_L 
		h_1 h_2 e_{\varepsilon_2-\varepsilon_3}e_{\varepsilon_1-\varepsilon_4} \in h_1 h_2^2 + \mathcal{U}(\mathfrak{g}) \mathfrak{n}_{+} \\
		&&
		\left(f_{\varepsilon_2-\varepsilon_4}f_{\varepsilon_1-\varepsilon_3}\right)_L 
		h_1 h_3 e_{\varepsilon_2-\varepsilon_3}e_{\varepsilon_1-\varepsilon_4} \in h_1 h_2 h_3 + \mathcal{U}(\mathfrak{g}) \mathfrak{n}_{+}\\
		&&
		\left(f_{\varepsilon_2-\varepsilon_4}f_{\varepsilon_1-\varepsilon_3}\right)_L 
		h_2 ^2 e_{\varepsilon_2-\varepsilon_3}e_{\varepsilon_1-\varepsilon_4} \in h_2^3 + \mathcal{U}(\mathfrak{g}) \mathfrak{n}_{+} \\
		&&
		\left(f_{\varepsilon_2-\varepsilon_4}f_{\varepsilon_1-\varepsilon_3}\right)_L 
		h_2 h_3 e_{\varepsilon_2-\varepsilon_3}e_{\varepsilon_1-\varepsilon_4} \in h_2^2 h_3 + \mathcal{U}(\mathfrak{g}) \mathfrak{n}_{+} \\
		&&
		\left(f_{\varepsilon_2-\varepsilon_4}f_{\varepsilon_1-\varepsilon_3}\right)_L 
		e_{\varepsilon_3-\varepsilon_4} e_{\varepsilon_2-\varepsilon_3}e_{\varepsilon_1-\varepsilon_3} \in  h_1 h_3 + h_2 h_3 + \mathcal{U}(\mathfrak{g}) \mathfrak{n}_{+}\\
		&&
		\left(f_{\varepsilon_2-\varepsilon_4}f_{\varepsilon_1-\varepsilon_3}\right)_L 
		h_1 e_{\varepsilon_3-\varepsilon_4} e_{\varepsilon_2-\varepsilon_3}e_{\varepsilon_1-\varepsilon_3} \in h_1^2 h_3 + h_1 h_2 h_3 + \mathcal{U}(\mathfrak{g}) \mathfrak{n}_{+}\\
		&&
		\left(f_{\varepsilon_2-\varepsilon_4}f_{\varepsilon_1-\varepsilon_3}\right)_L 
		e_{\varepsilon_3-\varepsilon_4}  e_{\varepsilon_2-\varepsilon_3}^2 e_{\varepsilon_1-\varepsilon_2} \in -2 h_2 h_3 + \mathcal{U}(\mathfrak{g}) \mathfrak{n}_{+}\\
		&&
		\left(f_{\varepsilon_2-\varepsilon_4}f_{\varepsilon_1-\varepsilon_3}\right)_L 
		h_3 e_{\varepsilon_1-\varepsilon_2} e_{\varepsilon_2-\varepsilon_3}e_{\varepsilon_2-\varepsilon_4} \in - h_1 h_2 h_3 - h_1 h_3^2 + \mathcal{U}(\mathfrak{g}) \mathfrak{n}_{+}\\
		&&
		\left(f_{\varepsilon_2-\varepsilon_4}f_{\varepsilon_1-\varepsilon_3}\right)_L 
		f_{\varepsilon_1-\varepsilon_2}  e_{\varepsilon_1-\varepsilon_2} e_{\varepsilon_2-\varepsilon_4} e_{\varepsilon_1-\varepsilon_3} \in \mathfrak{n}_{-}\mathcal{U}(\mathfrak{g}) + \mathcal{U}(\mathfrak{g}) \mathfrak{n}_{+} \\
		&&
		\left(f_{\varepsilon_2-\varepsilon_4}f_{\varepsilon_1-\varepsilon_3}\right)_L 
		f_{\varepsilon_1-\varepsilon_3}  e_{\varepsilon_1-\varepsilon_3} e_{\varepsilon_2-\varepsilon_4} e_{\varepsilon_1-\varepsilon_3} \in \mathfrak{n}_{-}\mathcal{U}(\mathfrak{g}) + \mathcal{U}(\mathfrak{g}) \mathfrak{n}_{+} \\
		&&
		\left(f_{\varepsilon_2-\varepsilon_4}f_{\varepsilon_1-\varepsilon_3}\right)_L 
		f_{\varepsilon_1-\varepsilon_4}  e_{\varepsilon_1-\varepsilon_4} e_{\varepsilon_2-\varepsilon_4} e_{\varepsilon_1-\varepsilon_3} \in \mathfrak{n}_{-}\mathcal{U}(\mathfrak{g}) + \mathcal{U}(\mathfrak{g}) \mathfrak{n}_{+} \\
		&&
		\left(f_{\varepsilon_2-\varepsilon_4}f_{\varepsilon_1-\varepsilon_3}\right)_L 
		f_{\varepsilon_2-\varepsilon_3}  e_{\varepsilon_2-\varepsilon_3} e_{\varepsilon_2-\varepsilon_4} e_{\varepsilon_1-\varepsilon_3} \in \mathfrak{n}_{-}\mathcal{U}(\mathfrak{g}) + \mathcal{U}(\mathfrak{g}) \mathfrak{n}_{+} \\
		&&
		\left(f_{\varepsilon_2-\varepsilon_4}f_{\varepsilon_1-\varepsilon_3}\right)_L 
		f_{\varepsilon_2-\varepsilon_4}  e_{\varepsilon_2-\varepsilon_4}^2 e_{\varepsilon_1-\varepsilon_3} \in \mathfrak{n}_{-}\mathcal{U}(\mathfrak{g}) + \mathcal{U}(\mathfrak{g}) \mathfrak{n}_{+} \\
		&&
		\left(f_{\varepsilon_2-\varepsilon_4}f_{\varepsilon_1-\varepsilon_3}\right)_L 
		f_{\varepsilon_3-\varepsilon_4}  e_{\varepsilon_3-\varepsilon_4} e_{\varepsilon_2-\varepsilon_4} e_{\varepsilon_1-\varepsilon_3} \in \mathfrak{n}_{-}\mathcal{U}(\mathfrak{g}) + \mathcal{U}(\mathfrak{g}) \mathfrak{n}_{+} \\
		&&
		\left(f_{\varepsilon_2-\varepsilon_4}f_{\varepsilon_1-\varepsilon_3}\right)_L 
		f_{\varepsilon_1-\varepsilon_2}  e_{\varepsilon_1-\varepsilon_2} e_{\varepsilon_2-\varepsilon_3} e_{\varepsilon_1-\varepsilon_4} \in \mathfrak{n}_{-}\mathcal{U}(\mathfrak{g}) + \mathcal{U}(\mathfrak{g}) \mathfrak{n}_{+} \\
		&&
		\left(f_{\varepsilon_2-\varepsilon_4}f_{\varepsilon_1-\varepsilon_3}\right)_L 
		f_{\varepsilon_1-\varepsilon_3}  e_{\varepsilon_1-\varepsilon_3} e_{\varepsilon_2-\varepsilon_3} e_{\varepsilon_1-\varepsilon_4} \in \mathfrak{n}_{-}\mathcal{U}(\mathfrak{g}) + \mathcal{U}(\mathfrak{g}) \mathfrak{n}_{+} \\
		&&
		\left(f_{\varepsilon_2-\varepsilon_4}f_{\varepsilon_1-\varepsilon_3}\right)_L 
		f_{\varepsilon_1-\varepsilon_4}  e_{\varepsilon_1-\varepsilon_4} e_{\varepsilon_2-\varepsilon_3} e_{\varepsilon_1-\varepsilon_4} \in \mathfrak{n}_{-}\mathcal{U}(\mathfrak{g}) + \mathcal{U}(\mathfrak{g}) \mathfrak{n}_{+} \\
		&&
		\left(f_{\varepsilon_2-\varepsilon_4}f_{\varepsilon_1-\varepsilon_3}\right)_L 
		f_{\varepsilon_2-\varepsilon_3}  e_{\varepsilon_2-\varepsilon_3}^2 e_{\varepsilon_1-\varepsilon_4} \in \mathfrak{n}_{-}\mathcal{U}(\mathfrak{g}) + \mathcal{U}(\mathfrak{g}) \mathfrak{n}_{+} \\
		&&
		\left(f_{\varepsilon_2-\varepsilon_4}f_{\varepsilon_1-\varepsilon_3}\right)_L 
		f_{\varepsilon_2-\varepsilon_4}  e_{\varepsilon_2-\varepsilon_4} e_{\varepsilon_2-\varepsilon_3} e_{\varepsilon_1-\varepsilon_4} \in \mathfrak{n}_{-}\mathcal{U}(\mathfrak{g}) + \mathcal{U}(\mathfrak{g}) \mathfrak{n}_{+} \\
		&&
		\left(f_{\varepsilon_2-\varepsilon_4}f_{\varepsilon_1-\varepsilon_3}\right)_L 
		f_{\varepsilon_3-\varepsilon_4}  e_{\varepsilon_3-\varepsilon_4} e_{\varepsilon_2-\varepsilon_3} e_{\varepsilon_1-\varepsilon_4} \in \mathfrak{n}_{-}\mathcal{U}(\mathfrak{g}) + \mathcal{U}(\mathfrak{g}) \mathfrak{n}_{+} \\
		&&
		\left(f_{\varepsilon_2-\varepsilon_4}f_{\varepsilon_1-\varepsilon_3}\right)_L 
		h_3  f_{\varepsilon_1-\varepsilon_2} e_{\varepsilon_1-\varepsilon_4} e_{\varepsilon_1-\varepsilon_3} \in \mathfrak{n}_{-}\mathcal{U}(\mathfrak{g}) + \mathcal{U}(\mathfrak{g}) \mathfrak{n}_{+} \\
		&&
		\left(f_{\varepsilon_2-\varepsilon_4}f_{\varepsilon_1-\varepsilon_3}\right)_L 
		f_{\varepsilon_1-\varepsilon_2} e_{\varepsilon_3-\varepsilon_4} e_{\varepsilon_1-\varepsilon_3}^2 \in \mathfrak{n}_{-}\mathcal{U}(\mathfrak{g}) + \mathcal{U}(\mathfrak{g}) \mathfrak{n}_{+} \\
		&&
		\left(f_{\varepsilon_2-\varepsilon_4}f_{\varepsilon_1-\varepsilon_3}\right)_L 
		h_1  f_{\varepsilon_3-\varepsilon_4} e_{\varepsilon_2-\varepsilon_4} e_{\varepsilon_1-\varepsilon_4} \in \mathfrak{n}_{-}\mathcal{U}(\mathfrak{g}) + \mathcal{U}(\mathfrak{g}) \mathfrak{n}_{+} \\
		&&
		\left(f_{\varepsilon_2-\varepsilon_4}f_{\varepsilon_1-\varepsilon_3}\right)_L 
		f_{\varepsilon_3-\varepsilon_4}  e_{\varepsilon_1-\varepsilon_2} e_{\varepsilon_2-\varepsilon_4}^2 \in \mathfrak{n}_{-}\mathcal{U}(\mathfrak{g}) + \mathcal{U}(\mathfrak{g}) \mathfrak{n}_{+} \\
		&&
		\left(f_{\varepsilon_2-\varepsilon_4}f_{\varepsilon_1-\varepsilon_3}\right)_L 
		f_{\varepsilon_3-\varepsilon_4}  f_{\varepsilon_1-\varepsilon_2} e_{\varepsilon_1-\varepsilon_4}^2  \in \mathfrak{n}_{-}\mathcal{U}(\mathfrak{g}) + \mathcal{U}(\mathfrak{g}) \mathfrak{n}_{+}. 
	\end{eqnarray*}
Using the explicit formula for $v'$ from Proposition \ref{Zhu-algebra}, one easily obtains that  	
\begin{equation*} 
	\begin{aligned}
		& v_1 \in -\frac{5}{2}h_2 - \frac{7}{2}h_1h_2 + \frac{3}{2}h_1h_3 - \frac{7}{2}h_2h_3 - \frac{31}{6}{h_2}^2 - \frac{13}{3}h_1{h_2}^2 - {h_1}^2h_2 - 2h_1h_2h_3 -\\
		&- \frac{10}{3}{h_2}^3 + h_1{h_3}^2 + {h_1}^2h_3 - \frac{13}{3}{h_2}^2h_3 - h_2{h_3}^2 - \frac{4}{3}h_1{h_2}^3 - \frac{2}{3}{h_1}^2{h_2}^2 +\\
		&+ \frac{2}{3}{h_1}^2{h_3}^2 - \frac{4}{3}{h_1}{h_2}^2h_3 - \frac{2}{3}{h_2}^4 - \frac{4}{3}{h_2}^3h_3 - \frac{2}{3}{h_2}^2{h_3}^2 + \mathfrak{n}_{-}\mathcal{U}(\mathfrak{g}) + \mathcal{U}(\mathfrak{g}) \mathfrak{n}_{+},
	\end{aligned}
\end{equation*}
which implies that $p_1 \in {\mathcal P}_{0}$. Similarly, one obtains that 
$$ v_2 \in p_2(h) + \mathfrak{n}_{-}\mathcal{U}(\mathfrak{g}) + \mathcal{U}(\mathfrak{g}) \mathfrak{n}_{+},
$$
which implies that $p_2 \in {\mathcal P}_{0}$. Since $p_1$ and $p_2$ are linearly independent, we obtain the claim of Lemma.
\end{proof}

The following proposition gives the classification of irreducible $A( \widetilde{L}_{-5/2}( \mathfrak{g}))$--modules in the category $\mathcal{O}$:

\begin{proposition}\label{Zhu-moduli}
	The complete list of irreducible $A( \widetilde{L}_{-5/2}( \mathfrak{g}))$--modules in the category $\mathcal{O}$ is given by the set:
	\begin{equation} \label{moduli-kat-O-skup}
	\left\lbrace V(\mu_i(t) ) \ | \ i=1,\ldots,16, \ t\in \mathbb{C}\right\rbrace,
	\end{equation}
	 where:\\
	\begin{tabular}{ p{6cm} p{9cm} }
		$\mu_1(t)=t\omega_1$, & $\mu_9(t)=-\frac{3}{2}\omega_1 + t\omega_3$, \vspace*{0.2cm} \\ 
		$\mu_2(t)=t\omega_3$, & 	$\mu_{10}(t)=t\omega_1-\frac{3}{2}\omega_3 $, \vspace*{0.2cm} \\  
		$\mu_3(t)=t\omega_1+(-t-\frac{5}{2} ) \omega_{2}$, & 	$\mu_{11}(t)=-\frac{3}{2}\omega_1+t\omega_2+\left(-t-1 \right) \omega_3$,  \vspace*{0.2cm}\\ 
		$\mu_4(t)=t\omega_2+(-t-\frac{5}{2} ) \omega_{3}$, & 	$\mu_{12}(t)=\left(-t-1 \right) \omega_1 + t\omega_2-\frac{3}{2}\omega_3$, \vspace*{0.2cm} \\ 
		$\mu_5(t)=t\omega_1 - \frac{3}{2}\omega_2$, & 	$\mu_{13}(t)=-\frac{1}{2}\omega_1-\frac{1}{2}\omega_2+t\omega_3$, \vspace*{0.2cm} \\ 
		$\mu_6(t)=- \frac{3}{2}\omega_2 +t\omega_3$, & $\mu_{14}(t)=-\frac{1}{2}\omega_1+t\omega_2+(-t-\frac{3}{2} )\omega_3$,  \vspace*{0.2cm}\\ 
		$\mu_7(t)=t\omega_1+(-t-1 ) \omega_{2}$, & $\mu_{15}(t)=t\omega_1-\frac{1}{2}\omega_2-\frac{1}{2}\omega_3$,  \vspace*{0.2cm}\\ 
		$\mu_8(t)=t\omega_2+(-t-1 ) \omega_{3}$, & $\mu_{16}(t)=(-t-\frac{3}{2}) \omega_1+t\omega_2-\frac{1}{2}\omega_3.$   
	\end{tabular}
\end{proposition}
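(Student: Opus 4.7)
By Corollary \ref{koro-polinomi}, the irreducible $A(\widetilde{L}_{-5/2}(\mathfrak{g}))$-modules in $\mathcal{O}$ are in bijection with weights $\mu \in \mathfrak{h}^*$ at which every $p \in \mathcal{P}_0$ vanishes. By Lemma \ref{lema-polinomi}, $\mathcal{P}_0$ is two-dimensional and spanned by the explicit polynomials $p_1, p_2$. Writing $\mu = m_1\omega_1 + m_2\omega_2 + m_3\omega_3$, the problem becomes the explicit description of the algebraic variety $V(p_1,p_2) \subset \mathbb{C}^3$ in coordinates $(h_1,h_2,h_3)=(m_1,m_2,m_3)$, and the claim is that this variety is precisely the union of the sixteen affine lines $\{\mu_i(t) : t \in \mathbb{C}\}$.

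The key observation I would use is that in the sum $p_1 + p_2$ most monomials cancel, leaving a common factor $h_1 h_3$:
\[
p_1 + p_2 \;=\; h_1 h_3 \, Q(h_1,h_2,h_3)
\]
for an explicit quadratic polynomial $Q$. This splits the analysis into three cases: (i) $h_1 = 0$, (ii) $h_3 = 0$, and (iii) $h_1 h_3 \neq 0$ with $Q = 0$. The Dynkin involution $\sigma$ of Remark \ref{automorphism} swaps $h_1 \leftrightarrow h_3$, and since $\sigma(v) = v$ it preserves the zero locus of $\{p_1,p_2\}$, interchanging cases (i) and (ii). Hence it suffices to analyze case (i) and one half of case (iii), then apply $\sigma$.

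In case (i), since $p_1 + p_2 = 0$ on $\{h_1 = 0\}$, the condition $p_1 = p_2 = 0$ reduces to $p_1|_{h_1=0} = 0$. Substituting $h_1 = 0$ into the formula of Lemma \ref{lema-polinomi} and factoring the resulting bivariate polynomial, I expect the factorization
\[
p_1|_{h_1 = 0} \;=\; -\tfrac{2}{3}\, h_2 \,(h_2 + \tfrac{3}{2})\,(h_2 + h_3 + 1)\,(h_2 + h_3 + \tfrac{5}{2}),
\]
whose four linear factors give exactly the lines $\mu_2, \mu_6, \mu_8, \mu_4$. Applying $\sigma$ then yields the ``coordinate'' lines $\mu_1, \mu_5, \mu_7, \mu_3$. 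For case (iii), since $Q$ is affine linear in $h_1$, I would solve $Q = 0$ for $h_1$, substitute into $p_1$, clear denominators, and factor the resulting polynomial in $(h_2,h_3)$; the expected outcome is a product of eight linear factors, pairing into four $\sigma$-orbits $\{\mu_9,\mu_{10}\}$, $\{\mu_{11},\mu_{12}\}$, $\{\mu_{13},\mu_{15}\}$, $\{\mu_{14},\mu_{16}\}$.

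The main obstacle is the explicit factorization in case (iii): after substitution one faces a polynomial in two variables of moderately high degree with rational coefficients, and confirming that it splits completely into eight linear factors is tedious. Fortunately, the candidate answer is already known, so a direct substitution verifies that each $\mu_i(t)$ lies in $V(p_1,p_2)$, and the three-case analysis together with the $\sigma$-symmetry ensures that no further components appear. Since cases (i)--(iii) exhaust $V(p_1,p_2)$, the enumeration is complete and the classification follows from Corollary \ref{koro-polinomi}.
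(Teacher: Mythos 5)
Your overall frame is sound and close to the paper's: the reduction via Corollary \ref{koro-polinomi} and Lemma \ref{lema-polinomi}, the factorization $p_1+p_2=h_1h_3Q$ (this is exactly the paper's relation $H_1H_3Q_1=0$), and your quartic factorization in case (i),
$p_1|_{h_1=0}=-\tfrac{2}{3}\,h_2\,(h_2+\tfrac{3}{2})(h_2+h_3+1)(h_2+h_3+\tfrac{5}{2})$,
are all correct and do yield $\mu_2,\mu_6,\mu_8,\mu_4$, with $\sigma$ giving case (ii). The genuine gap is case (iii), which is the substance of the proposition and is left at the level of an expectation. Concretely: $Q$ is linear in $h_1$ with coefficient proportional to $3+4h_2+2h_3$, and this coefficient vanishes identically along two of the eight lines you expect to find, namely $\mu_{10}(t)=t\omega_1-\tfrac{3}{2}\omega_3$ and $\mu_{15}(t)=t\omega_1-\tfrac{1}{2}\omega_2-\tfrac{1}{2}\omega_3$, which project to single points of the $(h_2,h_3)$-plane. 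So after solving $Q=0$ for $h_1$, substituting into $p_1$ and clearing denominators you cannot obtain ``a product of eight linear factors''; the degenerate locus $3+4h_2+2h_3=0$ must be treated separately, and the sublocus where its $\sigma$-image $3+4h_2+2h_1$ also vanishes escapes even the symmetry reduction and has to be checked by hand. Moreover, your fallback --- verifying that each $\mu_i(t)$ satisfies $p_1=p_2=0$ and asserting that ``the three-case analysis together with the $\sigma$-symmetry ensures that no further components appear'' --- establishes only the inclusion of the sixteen lines in the zero locus; the reverse inclusion is precisely what the unexecuted case (iii) was supposed to deliver, so as written the argument is circular at the crucial point.

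For comparison, the paper closes this case by using a second factorization which your proposal never exploits: up to a scalar, $p_2=H_2\,(H_1+H_2+H_3+\tfrac{5}{2})\,Q_2$, so the system splits into the nine intersections of $\{H_2=0,\ H_1+H_2+H_3+\tfrac{5}{2}=0,\ Q_2=0\}$ with $\{H_1=0,\ H_3=0,\ Q_1=0\}$; each mixed case factors linearly on the nose, and the only genuinely quadratic case $Q_1=Q_2=0$ is resolved by the identity $Q_1-4Q_2=-3(1+2H_1)(1+2H_3)$, which linearizes it and produces $\mu_{13},\dots,\mu_{16}$ (this also automatically captures $\mu_{10}$ and $\mu_{15}$). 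Supplying this second factorization and identity, or alternatively completing your substitution analysis including the degenerate loci above, is what is needed to close the gap.
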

\begin{proof} 
Let $V(\mu )$, for $\mu \in \mathfrak{h}^*$, be an irreducible $A( \widetilde{L}_{-5/2}( \mathfrak{g}))$--module in the category 
$\mathcal{O}$. Then Corollary \ref{koro-polinomi} and Lemma \ref{lema-polinomi} imply that the highest weight $\mu$ is a solution of polynomial equations
$$ p_1(\mu(h))= p_2(\mu(h))=0. $$
Let us denote $H_i=\mu(h_i ) $, for $i=1,2,3$. Relation $p_2(\mu(h))=0$ now gives 
\begin{equation} \label{f1}
H_2 \left( H_1+H_2+H_3+\frac{5}{2}\right) \underbrace{\left( 3+3H_1+5H_2+2H_1H_2+2{H_2}^2+3H_3+4H_1H_3+2H_2H_3 \right)}_\text{$Q_2$}   =0,
\end{equation}
and relation $p_1(\mu(h))+ p_2(\mu(h))=0$ gives:
\begin{equation} \label{f2}
H_1H_3 \underbrace{\left( 9+6H_1+20H_2+8H_1H_2+8{H_2}^2+6H_3+4H_1H_3+8H_2H_3\right)}_\text{$Q_1$} =0.
\end{equation}

From relations (\ref{f1}) and (\ref{f2}) we conclude:
\begin{itemize}
	\item For $H_1=0$ and $H_2=0$, we obtain $\mu=t\omega_3$.
	\item For $H_2=0$ and $H_3=0$, we obtain $\mu=t\omega_1$.
	\item For $H_1=0$ and $H_1+H_2+H_3+\frac{5}{2}=0$, we obtain $\mu=t\omega_2+\left(-t-\frac{5}{2} \right)\omega_3 $.
	\item For $H_3=0$ and $H_1+H_2+H_3+\frac{5}{2}=0$, we obtain $\mu=\left(-t-\frac{5}{2} \right)\omega_1+t\omega_2 $.
	\item For $H_1=0$ and $Q_2=0$, we have $\left(3+2H_2\right) \left( H_2+H_3+1\right) =0 $, so we obtain $\mu=-\frac{3}{2}\omega_2+t\omega_3$ or $\mu=t\omega_2+\left(-t-1 \right)\omega_3 $. 
	\item For $H_3=0$ and $Q_2=0$, we have $\left(3+2H_2\right) \left( H_1+H_2+1\right) =0 $, so we obtain $\mu=t\omega_1-\frac{3}{2}\omega_2$ or $\mu=\left(-t-1 \right)\omega_1+ t\omega_2$.
	\item For $Q_1=0$ and $H_2=0$, we have $\left(3+2H_1\right) \left( 3+2H_3\right) =0 $, so we obtain $\mu=-\frac{3}{2}\omega_1+t\omega_3$ or \\ $\mu=t\omega_1-\frac{3}{2}\omega_3$.
	\item For $Q_1=0$ and $H_1+H_2+H_3+\frac{5}{2}=0$, we have $\left(3+2H_1\right) \left( 3+2H_3\right) =0 $, so we obtain \\$\mu=-\frac{3}{2}\omega_1+t\omega_2+\left(-t-1 \right) \omega_3 $ or $\mu=\left(-t-1 \right) \omega_1 + t\omega_2-\frac{3}{2}\omega_3 $.
	\item For $Q_1=0$ and $Q_2=0$, we have 
	$$Q_1-4Q_2=-12H_1H_3-6H_1-6H_3-3=0$$
	 which implies $\left(1+2H_1 \right) \left(1+2H_3 \right) =0$. If $H_1=-\frac{1}{2}$, then $Q_1=0$ implies that
	 $$\left(1+2H_2 \right) \left( H_2+H_3+\frac{3}{2}\right)=0,$$
	 so we obtain $\mu=-\frac{1}{2}\omega_1-\frac{1}{2}\omega_2+t\omega_3$ or $\mu=-\frac{1}{2}\omega_1+t\omega_2+\left(-t-\frac{3}{2} \right)\omega_3 $. Similarly, if $H_3=-\frac{1}{2}$, then $Q_1=0$ implies
	  $$\left(1+2H_2 \right) \left( H_1+H_2+\frac{3}{2} \right)=0,$$
	 so we obtain $\mu=t\omega_1-\frac{1}{2}\omega_2-\frac{1}{2}\omega_3$ or $\mu=\left(-t-\frac{3}{2} \right)\omega_1+t\omega_2-\frac{1}{2}\omega_3 $.
\end{itemize}
The proof now follows from Corollary \ref{koro-polinomi}.
\end{proof}

Note that the weights $\mu_i(t)$ from the set (\ref{moduli-kat-O-skup}) are dominant integral only for $i=1,2$ and $t \in \mathbb {Z}_{\ge 0}$. Thus, we obtain:

\begin{corollary}\label{Korolar 1}
	The set
	$$ \{V( t \omega_1) \ | \  t \in \mathbb {Z}_{\ge 0} \} \cup \{ V(t \omega_3)\ | \ t \in \mathbb {Z}_{\ge 0} \}$$
	provides a complete list of irreducible finite-dimensional modules for the Zhu's algebra $A( \widetilde{L}_{-5/2}( \mathfrak{g}))$.
\end{corollary}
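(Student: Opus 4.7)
The plan is to combine Proposition~\ref{Zhu-moduli} with the classical fact that the finite-dimensional irreducible $\mathcal{U}(\mathfrak{g})$-modules for $\mathfrak{g}=sl(4)$ are exactly the $V(\mu)$ with $\mu$ dominant integral, i.e.\ $\mu(h_j)\in\mathbb{Z}_{\geq 0}$ for $j=1,2,3$. Since by Proposition~\ref{Zhu-algebra} the Zhu algebra $A(\widetilde{L}_{-5/2}(\mathfrak{g}))$ is a quotient of $\mathcal{U}(\mathfrak{g})$, every irreducible $A(\widetilde{L}_{-5/2}(\mathfrak{g}))$-module is an irreducible $\mathcal{U}(\mathfrak{g})$-module on which the image $v'$ of the singular vector acts trivially. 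A finite-dimensional irreducible module is in particular a module in the category $\mathcal{O}$, so it must already appear in the list~(\ref{moduli-kat-O-skup}).

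The corollary therefore reduces to a finite check: for each $i\in\{1,\ldots,16\}$, determine the set of $t\in\mathbb{C}$ for which $\mu_i(t)$ is a dominant integral weight. Reading off the coordinates of each $\mu_i(t)$ in the basis $\{\omega_1,\omega_2,\omega_3\}$ directly from Proposition~\ref{Zhu-moduli}, the weights $\mu_3,\ldots,\mu_{16}$ all contain at least one coordinate of the form $-\tfrac{5}{2},-\tfrac{3}{2},-\tfrac{1}{2}$, or $-t-c$ with $c>0$. The fixed negative half-integer coordinates are never non-negative integers, and a linked pair such as $(t,-t-c)$ forces $t\leq -c<0$, which is incompatible with the remaining coordinate being a non-negative integer. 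Hence none of $\mu_3,\ldots,\mu_{16}$ is dominant integral for any value of $t$.

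The only surviving cases are $\mu_1(t)=t\omega_1$ and $\mu_2(t)=t\omega_3$, which are dominant integral exactly when $t\in\mathbb{Z}_{\geq 0}$. This yields precisely the set $\{V(t\omega_1) \mid t\in\mathbb{Z}_{\geq 0}\}\cup\{V(t\omega_3) \mid t\in\mathbb{Z}_{\geq 0}\}$ asserted in the corollary. There is no genuine obstacle here; once Proposition~\ref{Zhu-moduli} is in hand, the statement reduces to the elementary dominance check above.
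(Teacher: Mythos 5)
Your proposal is correct and follows essentially the same route as the paper: the paper's proof is precisely the observation that among the weights $\mu_i(t)$ in Proposition \ref{Zhu-moduli} only $\mu_1(t)=t\omega_1$ and $\mu_2(t)=t\omega_3$ with $t\in\mathbb{Z}_{\ge 0}$ are dominant integral, combined with the standard facts about finite-dimensional irreducible $\mathcal{U}(\mathfrak{g})$-modules that you invoke. Your dominance check of the sixteen families is accurate, so there is nothing to add.
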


Zhu's theory now implies:

\begin{theorem} \label{kvocijent-moduli} Using the notation from Proposition \ref{Zhu-moduli}, the set 
	$$ \left\lbrace L_{-5/2}(\mu_i(t)) \ | \ i=1,\ldots,16, \ t\in \mathbb{C}\right\rbrace $$
	provides a complete list of irreducible $\widetilde{L}_{-5/2}\left( \mathfrak{g}\right)$--modules in the category $\mathcal{O}$.
\end{theorem}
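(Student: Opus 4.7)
The plan is to deduce this as an immediate consequence of Proposition \ref{Zhu-moduli} by invoking Zhu's one-to-one correspondence between irreducible $\mathbb{Z}_+$-graded modules for a vertex algebra $V$ and irreducible $A(V)$-modules (the unlabelled proposition following Proposition \ref{Zhu1}). The heavy lifting has already been done at the level of the Zhu's algebra, so only a routine transfer of information remains.

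For the forward direction, let $M$ be an arbitrary irreducible $\widetilde{L}_{-5/2}(\mathfrak{g})$-module in the category $\mathcal{O}$. Since $M$ lies in $\mathcal{O}$, it admits a compatible $\mathbb{Z}_+$-grading $M = \bigoplus_{n \geq 0} M(n)$ by conformal weight (after a suitable shift so that $M(0) \neq 0$), and by Proposition \ref{Zhu1}(1) the top component $M(0)$ is an irreducible $A(\widetilde{L}_{-5/2}(\mathfrak{g}))$-module. Under the isomorphism $A(\widetilde{L}_{-5/2}(\mathfrak{g})) \cong \mathcal{U}(\mathfrak{g})/\langle v' \rangle$ from Proposition \ref{Zhu-algebra}, the $\mathfrak{g}$-module structure inherited by $M(0)$ lies in the BGG category $\mathcal{O}$ for $\mathfrak{g}$. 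Hence Proposition \ref{Zhu-moduli} forces $M(0) \cong V(\mu_i(t))$ for some $i \in \{1,\dots,16\}$ and some $t \in \mathbb{C}$, so the highest weight of $M$ is $\mu_i(t)$ and therefore $M \cong L_{-5/2}(\mu_i(t))$.

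For the converse direction, fix $i \in \{1,\dots,16\}$ and $t \in \mathbb{C}$. By Proposition \ref{Zhu-moduli}, the irreducible $\mathfrak{g}$-module $V(\mu_i(t))$ is an $A(\widetilde{L}_{-5/2}(\mathfrak{g}))$-module, so the one-to-one correspondence provides a unique irreducible $\mathbb{Z}_+$-graded $\widetilde{L}_{-5/2}(\mathfrak{g})$-module with top component $V(\mu_i(t))$. But the unique irreducible $\widehat{\mathfrak{g}}$-module whose top component is $V(\mu_i(t))$ is $L_{-5/2}(\mu_i(t))$, so $L_{-5/2}(\mu_i(t))$ descends to a (necessarily irreducible) $\widetilde{L}_{-5/2}(\mathfrak{g})$-module, which manifestly lies in category $\mathcal{O}$.

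There is really no obstacle to this argument; it is a direct application of Zhu's machinery to the classification already obtained in Proposition \ref{Zhu-moduli}. The only point requiring minor care is the compatibility between category $\mathcal{O}$ for $\widehat{\mathfrak{g}}$-modules and category $\mathcal{O}$ for $\mathfrak{g}$-modules under the passage to top components, which is standard: a $\widehat{\mathfrak{g}}$-module lies in $\mathcal{O}$ if and only if its top component is a $\mathfrak{g}$-module in $\mathcal{O}$ with the appropriate central character determined by the level $k = -5/2$.
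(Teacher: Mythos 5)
Your argument is correct and is exactly the paper's route: the paper deduces Theorem \ref{kvocijent-moduli} from Proposition \ref{Zhu-moduli} with the single phrase ``Zhu's theory now implies,'' i.e.\ the one-to-one correspondence between irreducible $A(\widetilde{L}_{-5/2}(\mathfrak{g}))$--modules and irreducible $\mathbb{Z}_+$--graded modules, together with the standard fact that irreducible highest weight modules in category $\mathcal{O}$ are determined by their top components. Your write-up just makes both directions of this transfer explicit.
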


\begin{corollary} \label{klas-quot-KL} The set 
	$$ \left\lbrace L_{-5/2}( t \omega_1 )\ | \ t\in \mathbb {Z}_{\ge 0}\right\rbrace \cup \left\lbrace L_{-5/2}( t \omega_3 )\ | \ t\in \mathbb {Z}_{\ge 0}\right\rbrace $$
	provides a complete list of irreducible $\widetilde{L}_{-5/2}( \mathfrak{g})$--modules in the category $KL_{-5/2}$.
\end{corollary}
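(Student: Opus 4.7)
The plan is to deduce this classification directly from the finite-dimensional Zhu classification in Corollary \ref{Korolar 1} via Zhu's correspondence. First I would observe that, by definition, any module in $KL_{-5/2}$ has locally finite action of the horizontal subalgebra $\mathfrak{g}$, so in particular its top graded component is a finite-dimensional $\mathfrak{g}$-module. By Proposition \ref{Zhu1}, this top component carries the structure of an $A(\widetilde{L}_{-5/2}(\mathfrak{g}))$-module, and if the original $\widetilde{L}_{-5/2}(\mathfrak{g})$-module is irreducible then so is its top component as an $A(\widetilde{L}_{-5/2}(\mathfrak{g}))$-module.

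The second step is to invoke the bijection between irreducible $\mathbb{Z}_{\ge 0}$-graded $\widetilde{L}_{-5/2}(\mathfrak{g})$-modules and irreducible $A(\widetilde{L}_{-5/2}(\mathfrak{g}))$-modules from the second Zhu proposition. Combined with the preceding paragraph, this says that irreducible modules in $KL_{-5/2}$ correspond bijectively to irreducible \emph{finite-dimensional} $A(\widetilde{L}_{-5/2}(\mathfrak{g}))$-modules. By Corollary \ref{Korolar 1}, the latter set is exactly $\{V(t\omega_1)\}_{t\in\mathbb{Z}_{\ge 0}}\cup\{V(t\omega_3)\}_{t\in\mathbb{Z}_{\ge 0}}$, and the irreducible $\widetilde{L}_{-5/2}(\mathfrak{g})$-module with top component $V(t\omega_i)$ is by construction $L_{-5/2}(t\omega_i)$.

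Finally I would verify that each $L_{-5/2}(t\omega_1)$ and $L_{-5/2}(t\omega_3)$ actually lies in $KL_{-5/2}$, i.e.\ that $\mathfrak{g}$ acts locally finitely on the whole module. This is standard: since $t\omega_1$ and $t\omega_3$ are dominant integral, the generalized Verma module $V^{-5/2}(t\omega_i)$ already has finite-dimensional $\mathfrak{g}$-weight spaces on each $L_0$-eigenspace, and $L_{-5/2}(t\omega_i)$ is a quotient, so the property is inherited. I expect no genuine obstacle here; the content of the corollary is essentially a repackaging of Corollary \ref{Korolar 1} through the standard Zhu dictionary, and all the hard work has already been done in Section \ref{sect-class-quotient}.
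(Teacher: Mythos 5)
Your proposal is correct and follows essentially the same route as the paper: the corollary is obtained by restricting the Zhu correspondence of Theorem \ref{kvocijent-moduli} to modules whose top components are finite-dimensional, i.e.\ by combining Corollary \ref{Korolar 1} with Proposition \ref{Zhu1}, exactly as you do. Your final check that $L_{-5/2}(t\omega_i)$ indeed lies in $KL_{-5/2}$ (finite-dimensional graded pieces of the generalized Verma module pass to the quotient) is the same implicit observation the paper relies on.
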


\begin{remark} \label{rem-klas-simple}
In what follows, we will prove that the vertex algebra $\widetilde{L}_{-5/2}( \mathfrak{g})$ is simple (see Section \ref{sect-collapsing}), so Theorem~\ref{kvocijent-moduli} and Corollary \ref{klas-quot-KL} will give classifications of irreducible $L_{-5/2}\left( \mathfrak{g}\right)$--modules in the category $\mathcal{O}$, and irreducible $L_{-5/2}( \mathfrak{g})$--modules in the category $KL_{-5/2}$, respectively. The crucial part of the proof of that simplicity result is the classification of irreducible $\widetilde{L}_{-5/2}( \mathfrak{g})$--modules in the category $KL_{-5/2}$ from Corollary \ref{klas-quot-KL}.
\end{remark}

\section{The description of maximal ideal in $V^{-5/2}(sl(4))$ and semi-simplicity of $KL_{-5/2}$} \label{sect-collapsing}

In this section we prove that the vertex algebra $\widetilde{L}_{-5/2}( \g)$, defined in Section \ref{sect-singular-vector}, is simple, i.e. that the ideal $\langle v \rangle$ in $V^{-5/2}(\g)$ generated by the singular vector $v$ from Theorem \ref{thm-singv} is the maximal ideal in $V^{-5/2}(\g)$. Furthermore, we prove that the category $KL_{-5/2}$ for $L_{-5/2}( \g)$ is semi-simple. Throughout this section and Section \ref{singular-new} we will use the notation
$$J^{-5/2} := \langle v \rangle,$$
i.e. $\widetilde{L}_{-5/2}( \g) = V^{-5/2}(\g) / J^{-5/2}$.

\subsection{Maximal ideal in $V^{-1}(sl(4))$  }
\label{review-1}
For a comparison, we recall the description of the maximal ideal in the case $k=-1$ which was obtained by T. Arakawa and A. Moreau.
\begin{itemize}
	\item Level $k=-1$ is collapsing, and the corresponding minimal affine $\mathcal{W}$--algebra
	$W_{-1}(sl(4), f_{\theta})$ collapses to the Heisenberg vertex algebra $M(1)$. This was proved in two different ways: in \cite{ArMo-sheets} using singular vectors in $V^{-1}(sl(4))$ (obtained in \cite{AP-2007}), and in \cite{AKMPP-JA} by proving that the $G$-generators  of $W^{-1}(sl(4), f_{\theta})$ of conformal weight $3/2$  belong to the maximal ideal.
	\item By applying Hamiltonian reduction, we see that the ideal  $J^{-1}$ generated by a singular vector from \cite{AP-2007} is mapped to an ideal in $W^{-1}(sl(4), f_{\theta})$ containing all $G$-generators. But from $G$-generators one can reconstruct $sl(2)$--part of $\g^{\natural} = gl(2)$. Therefore, the Hamiltonian reduction maps ideal $J^{-1}$ to the maximal ideal in $W^{-1}(sl(4), f_{\theta} )$, i.e., $H_{f_{\theta}} (V^{-1} (sl(4)) / J^{-1} ) = M(1)$.
	
	\item Assuming  that $ V^{-1} (sl(4)) / J^{-1} $ is not simple, using the properties of the functor $H_{f_{\theta}}$ we get $H_{f_{\theta}} (L_{-1}(sl(4))) = 0$, which is a contradiction.
	
	\item Therefore, $ J^{-1}$ must be maximal ideal in $V^{-1}(sl(4))$.
	
	\item It is proved later in \cite{AKMPP-IMRN} that $KL_{-1}$ is a semi-simple category, and in \cite{CY} that $KL_{-1}$ is a braided tensor category.	
	
\end{itemize}

\subsection{An approach in the case  $V^{-5/2}(sl(4))$  }
In the case $k=-5/2$ for $\mathfrak{g}=sl(4)$ we apply similar approach as in the case $k=-1$. But instead of using minimal affine $\mathcal{W}$--algebra $W^k(\g, f_{\theta})$, we shall use the  affine $\mathcal{W}$--algebra $W^k(\g, f_{subreg})$ associated to the subregular nilpotent element, defined in Subsection~\ref{subsect-W-subreg}.

Vertex algebra $W^k(\g, f_{subreg})$ is generated by $J, L = \bar{L}- \partial J, W, G^+, G^-$ having conformal weights  $1,2,3,2,2$. The OPE is presented by T. Creutzig and A. Linshaw in \cite{CL} (see also \cite{G-2017}). We recall the OPE for $W^k(\g, f_{subreg})$ in the Appendix.

\begin{theorem} \label{collapsing-1}
	Level $k=-5/2$ is a collapsing level for $W^k(\g, f_{subreg})$  and 
	$$W_{-5/2} (\g, f_{subreg}) \cong M_J(1),$$
	where $M_J(1)$ is the Heisenberg vertex algebra generated by $J$.
\end{theorem}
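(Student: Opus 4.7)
The plan is to apply the quantum Hamiltonian reduction functor $H_{f_{subreg}}$ to $\widetilde{L}_{-5/2}(\mathfrak{g}) = V^{-5/2}(\mathfrak{g})/J^{-5/2}$ and show that the resulting vertex algebra has $M_J(1)$ as its simple quotient. Since $H_{f_{subreg}}(V^{-5/2}(\mathfrak{g})) = W^{-5/2}(\mathfrak{g}, f_{subreg})$ and the reduction is exact on the relevant category, one has
\[
H_{f_{subreg}}(\widetilde{L}_{-5/2}(\mathfrak{g})) \cong W^{-5/2}(\mathfrak{g}, f_{subreg})/\langle H_{f_{subreg}}(v)\rangle.
\]
If this quotient is (or has simple quotient equal to) $M_J(1)$, then simplicity of $M_J(1)$ combined with the fact that $W_{-5/2}(\mathfrak{g}, f_{subreg})$ is the unique simple quotient of $W^{-5/2}(\mathfrak{g}, f_{subreg})$ will force $W_{-5/2}(\mathfrak{g}, f_{subreg}) \cong M_J(1)$.

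The first step is to identify $H_{f_{subreg}}(v)$ with $G^+$ up to a nonzero scalar. By Proposition~\ref{prop-C2-W}, $R_{W^{-5/2}(\mathfrak{g}, f_{subreg})} \cong (\mathcal{S}(\mathfrak{g})/J_\chi)^M$, and the reduction is compatible with passage to the Zhu $C_2$-algebras. Lemma~\ref{lem-singv-modJ} gives $v'' \equiv 2 e_{\varepsilon_1-\varepsilon_2}\pmod{J_\chi}$; since $[\mathfrak{m}, e_{\varepsilon_1-\varepsilon_2}]\subset \mathfrak{m}\subset J_\chi$, this class is $M$-invariant and equals the $C_2$-class of the generator $G^+ = J^{(e_{\varepsilon_1-\varepsilon_2})}$. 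A short weight bookkeeping (using the original $\bar{L}$-grading from QHR, in which $G^+$ has conformal weight one and nonzero $J$-charge equal to $(\varepsilon_1-\varepsilon_2)(h')$) shows that $H_{f_{subreg}}(v)$ lies in the weight-one, matching-$J$-charge component of $W^{-5/2}(\mathfrak{g}, f_{subreg})$, in which $G^+$ is the unique strong generator. Combined with the nonvanishing of the $C_2$-class, this forces $H_{f_{subreg}}(v) = c\, G^+$ for some $c \neq 0$.

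It remains -- and this is the main obstacle -- to show that the ideal $I = \langle G^+\rangle$ in $W^{-5/2}(\mathfrak{g}, f_{subreg})$ contains $G^-$, $W$, and an element of the form $L - L_{\mathrm{Heis}}$, where $L_{\mathrm{Heis}}$ is a Virasoro element built purely from $J$. For this I would use the explicit OPEs from the Appendix, evaluated at the specific level $k=-5/2$. Appropriate modes $G^+_{(n)}$ applied to $L$, $W$, $G^-$, and suitable normally ordered products involving $G^+$, should produce nonzero multiples of $G^-$, $W$, and $L - L_{\mathrm{Heis}}$ modulo $I$. What makes this nontrivial -- unlike the minimal reduction case in \cite{ArMo-sheets}, where $\mathfrak{g}^\natural = \mathfrak{gl}(2)$ supplies a built-in symmetry -- is the absence of an obvious symmetry exchanging $G^+$ and $G^-$: the Dynkin diagram automorphism $\sigma$ from Remark~\ref{automorphism} fixes $v$ but does not preserve $f_{subreg}$ (indeed $\sigma(f_{subreg}) = f_{\varepsilon_2-\varepsilon_3} + f_{\varepsilon_1-\varepsilon_2} \neq f_{subreg}$), so it does not descend to an automorphism of $W^{-5/2}(\mathfrak{g}, f_{subreg})$. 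The collapsing must therefore be extracted by direct OPE computation from the particular cancellations at $k=-5/2$. Once $I$ is shown to contain $G^-$, $W$, and $L - L_{\mathrm{Heis}}$, the quotient $W^{-5/2}(\mathfrak{g}, f_{subreg})/I$ is strongly generated by $J$ alone, hence a quotient of $M_J(1)$, and simplicity of $M_J(1)$ closes the argument.
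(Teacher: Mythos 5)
Your overall architecture can be made to work, but as written it has a genuine gap, and the detour through quantum Hamiltonian reduction is not needed for this particular statement. The paper proves Theorem \ref{collapsing-1} entirely inside $W^{-5/2}(\g,f_{subreg})$, by the collapsing-level method of \cite{AKMPP-JA}: at $k=-5/2$ the factor $5+2k$ vanishes, so the OPE in the Appendix gives $G^+_{(3)}G^-=G^+_{(2)}G^-=0$ and $G^+_{(1)}G^-\sim L-4:JJ:$, and these identities, together with the remaining OPE relations, are what place $G^{\pm}$ in the maximal ideal; then $L=4:JJ:$ holds in the simple quotient and $G^+_{(0)}G^-$ expresses $W$ through $J$ and $L$, so $W\in M_J(1)$. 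Your identification $H_{f_{subreg}}(v)=c\,G^+$ is correct (it is Lemma \ref{lem-img-sing}), but the paper uses it only later, in Proposition \ref{prop-Hf-L-tilde}; nothing about $v$, $J^{-5/2}$ or $\widetilde L_{-5/2}(\g)$ is needed for the theorem itself. (Also note that exactness gives $H_{f_{subreg}}(\widetilde L_{-5/2}(\g))\cong W^{-5/2}(\g,f_{subreg})/H_{f_{subreg}}(J^{-5/2})$, and $H_{f_{subreg}}(J^{-5/2})$ contains, but is not a priori equal to, the ideal generated by $H_{f_{subreg}}(v)$.)

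The gap is twofold. First, you defer exactly the computations that constitute the proof: the statement that suitable modes of $G^+$ ``should produce'' $G^-$, $W$ and $L-L_{\mathrm{Heis}}$ modulo the ideal is precisely what has to be verified from the OPE at $k=-5/2$, and you never record the decisive vanishings. Second, and more seriously, your closing step is incomplete: knowing that $W^{-5/2}(\g,f_{subreg})/\langle G^+\rangle$ is strongly generated by $J$ only makes it a quotient of the simple Heisenberg algebra $M_J(1)$, and that quotient could be zero; simplicity of $M_J(1)$ forces $W_{-5/2}(\g,f_{subreg})\cong M_J(1)$ only if you also know that $\langle G^+\rangle$ (equivalently $\langle G^{\pm}\rangle$) is a \emph{proper} ideal, i.e.\ that $G^{\pm}$ really lie in the maximal ideal. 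That properness is the nontrivial content of the collapsing-level criterion, and it is exactly where the vanishing of the fourth- and third-order poles of $G^+(z)G^-(w)$ at $k=-5/2$ (no multiple of the vacuum or of $J$ is produced) enters; your proposal never invokes it. Your observation that $\sigma$ from Remark \ref{automorphism} does not preserve $f_{subreg}$, so there is no symmetry exchanging $G^+$ and $G^-$, is correct and consistent with the paper, which instead gets $G^-$ inside the ideal from $(L-4:JJ:)_{(1)}G^-=-2G^-$ once $L-4:JJ:$ has been shown to lie there (see the proof of Proposition \ref{prop-Hf-L-tilde}).
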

\begin{proof}  
	The method of the proof  uses  concepts developed in \cite{AKMPP-JA}  for  collapsing levels. We need to prove that $G^{\pm}$ belong to the maximal ideal of $W^{-5/2}(\g, f_{subreg})$.
	Using OPE for $W^k(\g, f_{subreg})$ in the case $k=-5/2$ we conclude:
	$$ G^+ _3 G^- = G^+ _2 G^- = 0, \  G^+ _1  G^- \sim  ( L   -  4  : J J:).  $$
	Together with other OPE relations, this implies that $G^{\pm}$ belong to the maximal ideal in \linebreak $W^{-5/2}(\g, f_{subreg})$, i.e.  $G^{\pm} = 0$ in  $W_{-5/2} (\g, f_{subreg})$. As a consequence we get $L= 4 :J J:$, implying that $M_J(1)$ is conformally embedded into $W_{-5/2} (\g, f_{subreg})$.
	
	It remains to show that $W \in M_J(1)$.
	
	Applying OPE relations, and using the fact that $G^+ _0 G^- = 0$ and $L= 4 : J J:$, we get that $W \in M_J(1) \subset W_{-5/2} (\g, f_{subreg})$.
	In this way we have proved  that  $W_{-5/2} (\g, f_{subreg})$ is generated only with the Heisenberg field, so it is isomorphic to $M_J(1)$. The proof follows.
\end{proof}

The proof of the following lemma is similar to the proof of Lemma 7.3. from 
\cite{ArMo-sheets}: 
\begin{lemma} \label{lem-img-sing}
	The image of singular vector $v$ from Theorem \ref{thm-singv} in 
	$W^{-5/2}(\g, f_{subreg})$ coincides (up to a non-zero scalar) with the vector $G^+$.
\end{lemma}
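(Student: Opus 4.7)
\smallskip
\textbf{Proof plan.} The approach follows the strategy of \cite[Lemma 7.3]{ArMo-sheets}: pass to Zhu's $C_2$-algebra, where the computation has already been carried out in Lemma~\ref{lem-singv-modJ}, and then use a conformal weight count to lift the identification back to $W^{-5/2}(\mathfrak{g}, f_{subreg})$.

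First I would pin down the $\bar L$-conformal weight of $H_{f_{subreg}}(v)$. The singular vector $v$ has affine conformal weight $4$ and $\mathfrak{h}$-weight $2\omega_2$. Because the $\mathcal{W}$-algebra Virasoro $\bar{L}$ differs from the affine Sugawara Virasoro by $\partial x$ with $x = \omega_2 + \omega_3$ (cf.~(\ref{rel-x-grading})), the image lies in conformal weight $4 - (2\omega_2)(x) = 4 - 3 = 1$. By the strong generation result recalled in Subsection~\ref{subsect-W-subreg}, the $\bar{L}$-weight one subspace of $W^{-5/2}(\mathfrak{g}, f_{subreg})$ is two-dimensional, spanned by $J$ and $G^+$ (the two generators coming from $\mathfrak{g}^{f}_{0}$). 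Hence one may write $H_{f_{subreg}}(v) = \alpha J + \beta G^+$ for some $\alpha, \beta \in \mathbb{C}$, and the lemma amounts to the assertion that $\alpha = 0$ and $\beta \neq 0$.

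Next I would pass to the $C_2$-level. The functor $H_{f_{subreg}}$ is compatible with Li's filtration, and the induced map on associated graded pieces is the canonical projection $\mathcal{S}(\mathfrak{g}) = R_{V^{-5/2}(\mathfrak{g})} \twoheadrightarrow (\mathcal{S}(\mathfrak{g})/J_\chi)^M \cong R_{W^{-5/2}(\mathfrak{g}, f_{subreg})}$ from Proposition~\ref{prop-C2-W}; this is a standard fact (see \cite{DSK}, \cite{Ara-annals} and the discussion in \cite{ArMo-sheets}). By Lemma~\ref{lem-singv-modJ}, the class of $v$ is sent to $2 e_{\varepsilon_1 - \varepsilon_2}$. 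On the other hand, $G^+ = J^{\{e_{\varepsilon_1 - \varepsilon_2}\}}$ has $C_2$-class $e_{\varepsilon_1 - \varepsilon_2}$, while the $C_2$-class of the Heisenberg generator $J$ lies in $\mathfrak{h} \cap \mathfrak{g}^f$ and is, in particular, linearly independent from $e_{\varepsilon_1 - \varepsilon_2}$.

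Finally, a short weight count shows that no nonzero conformal weight $1$ vector lies in $C_2(W^{-5/2}(\mathfrak{g}, f_{subreg}))$: any $u_{(-2)}w \in C_2$ has conformal weight $\mathrm{wt}(u) + \mathrm{wt}(w) + 1 \geq 3$, since the minimal conformal weight of a strong generator is one. Therefore the projection $W^{-5/2}(\mathfrak{g}, f_{subreg})_{1} \to R_{W^{-5/2}(\mathfrak{g}, f_{subreg})}$ is injective on the weight one piece, which combined with the $C_2$-computation above forces $\alpha = 0$ and $\beta = 2$. The only subtle ingredient is the $C_2$-compatibility of $H_{f_{subreg}}$, but this is precisely what drives the analogous argument in \cite[Lemma 7.3]{ArMo-sheets}; granted that, the lemma reduces to Lemma~\ref{lem-singv-modJ}.
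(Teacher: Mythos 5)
Your proposal is correct and follows essentially the same route as the paper: one shows the image $\tilde v$ has conformal weight $1$ with respect to $\bar L$, and then Lemma \ref{lem-singv-modJ} together with Proposition \ref{prop-C2-W} identifies the class of $\tilde v$ in $R_{W^{-5/2}(\g, f_{subreg})}$ with $2e_{\varepsilon_1-\varepsilon_2}$, i.e.\ with the symbol of $2G^{+}$. The only (cosmetic) difference is the last step, where the paper concludes by observing that $\tilde v$ and $2G^{+}$ are singular vectors for $M_J(1)$ of the same conformal weight, whereas you use injectivity of the weight-one subspace into the $C_2$-quotient; note that your bound $\mathrm{wt}(u)+\mathrm{wt}(w)+1\ge 3$ should read $\ge 2$, since $w$ may be the vacuum, but the weight-one piece of $C_2(W^{-5/2}(\g, f_{subreg}))$ is still zero, so the conclusion is unaffected.
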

\begin{proof} Since $v$ is singular in $V^{-5/2}( \mathfrak{g})$, it is mapped to  a singular vector $\tilde v$ of $W^{-5/2}(\g, f_{subreg})$. It can be shown that, if $\tilde v$ is non-zero, it has conformal weight $1$ with respect to $\bar  L$.
	Its image in $R_{W^{-5/2}(\g, f_{subreg})}$ is the image of vector $v''$ from Proposition \ref{C2-algebra} in 
	$\left( \mathcal{S}(\g)/ J_\chi  \right) ^M$, where $\chi=\left( f_{subreg} \, | \, \cdot \right)$ (see Proposition \ref{prop-C2-W}). It follows from Lemma \ref{lem-singv-modJ} that $v'' \equiv 2e_{\varepsilon_1-\varepsilon_2} (\mathrm{mod} \ J_\chi)$, which implies that $\tilde v \equiv 2 G^+ (\mathrm{mod} \ C_2(W^{-5/2}(\g, f_{subreg})))$. This implies that  $\tilde v$ is non-zero, and  $\tilde v$ and $2G^+$ must coincide since they have the same conformal weight and they  are  both singular vectors for $M_J(1)$.
\end{proof}
\begin{proposition} \label{prop-Hf-L-tilde}
	We have:
	\item[(1)] $H_{f_{subreg}} (\widetilde{L}_{-5/2}( \mathfrak{g}) ) \cong M_J(1).$
	\item[(2)] 
	$H_{f_{subreg}} ({L}_{-5/2}( \mathfrak{g}) ) \cong M_J(1).$
\end{proposition}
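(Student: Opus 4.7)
The plan is to leverage the exactness of the quantum Hamiltonian reduction functor $H_{f_{subreg}}$ together with the identification of $H_{f_{subreg}}(v)$ from Lemma \ref{lem-img-sing}.

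For (1), apply $H_{f_{subreg}}$ to the short exact sequence
\begin{equation*}
0 \to J^{-5/2} \to V^{-5/2}(\mathfrak{g}) \to \widetilde{L}_{-5/2}(\mathfrak{g}) \to 0.
\end{equation*}
Using exactness of $H_{f_{subreg}}$ on this class of modules together with $H_{f_{subreg}}(V^{-5/2}(\mathfrak{g})) = W^{-5/2}(\mathfrak{g}, f_{subreg})$, this yields
\begin{equation*}
H_{f_{subreg}}(\widetilde{L}_{-5/2}(\mathfrak{g})) \cong W^{-5/2}(\mathfrak{g}, f_{subreg}) / H_{f_{subreg}}(J^{-5/2}),
\end{equation*}
where $H_{f_{subreg}}(J^{-5/2})$ is the ideal of $W^{-5/2}(\mathfrak{g}, f_{subreg})$ generated by $H_{f_{subreg}}(v)$. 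By Lemma \ref{lem-img-sing}, $H_{f_{subreg}}(v)$ equals, up to a nonzero scalar, the generator $G^+$, so the quotient equals $W^{-5/2}(\mathfrak{g}, f_{subreg}) / \langle G^+ \rangle$.

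To identify this quotient with $M_J(1)$, I would rerun the OPE analysis from the proof of Theorem \ref{collapsing-1}, but now starting only from $G^+ = 0$. The relation $G^+_1 G^- \sim L - 4:JJ:$ at $k = -5/2$ immediately yields $L = 4:JJ:$ in the quotient, conformally embedding $M_J(1)$. Combined with $G^+_0 G^- = 0$ and the explicit OPEs among $J, L, W, G^\pm$ from the Appendix, this forces the remaining generator $G^-$ into the ideal $\langle G^+ \rangle$ and $W$ into $M_J(1)$; hence the quotient is strongly generated by $J$ alone and is isomorphic to $M_J(1)$.

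For (2), since $L_{-5/2}(\mathfrak{g})$ is a further quotient of $\widetilde{L}_{-5/2}(\mathfrak{g})$, the vertex algebra $H_{f_{subreg}}(L_{-5/2}(\mathfrak{g}))$ is a quotient of $H_{f_{subreg}}(\widetilde{L}_{-5/2}(\mathfrak{g})) = M_J(1)$ by (1). Since $M_J(1)$ is simple, this quotient is either zero or $M_J(1)$. The vanishing case is excluded because the one-dimensional weight-zero subspace $\mathbb{C}\mathbf{1}$ of $V^{-5/2}(\mathfrak{g})$ has trivial intersection with the maximal ideal and maps to the one-dimensional weight-zero subspace $\mathbb{C}\mathbf{1}$ of $W^{-5/2}(\mathfrak{g}, f_{subreg})$, so $\mathbf{1}$ is not in the image of the maximal ideal, and hence $H_{f_{subreg}}(L_{-5/2}(\mathfrak{g})) \cong M_J(1)$.

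The main technical obstacle is the one-sided OPE bookkeeping in (1): whereas the proof of Theorem \ref{collapsing-1} uses both $G^+$ and $G^-$ symmetrically to show that both lie in the maximal ideal of $W^{-5/2}(\mathfrak{g}, f_{subreg})$, here one must show that $\langle G^+ \rangle$ alone already equals that maximal ideal. The Appendix OPEs at $k = -5/2$ should supply the relations putting $G^-$ and $W - (\text{Heisenberg expression})$ into $\langle G^+ \rangle$, but this requires more delicate manipulation than in the symmetric argument.
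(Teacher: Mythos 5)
Your part (1) is essentially the paper's argument: Lemma \ref{lem-img-sing} puts $G^+$ in the image of $J^{-5/2}$, the OPE relation $G^+_1G^-\sim L-4:JJ:$ at $k=-5/2$ puts $L-4:JJ:$ there as well, and the step you single out as the ``main technical obstacle'' (getting $G^-$ from $G^+$ alone) is in fact disposed of in one line in the paper: $(L-4:JJ:)_1G^-$ is a nonzero multiple of $G^-$, so $G^-$ lies in the ideal generated by $G^+$; after that the argument of Theorem \ref{collapsing-1} identifies the image with the maximal ideal and exactness of $H_{f_{subreg}}$ gives (1). So (1) is fine, modulo writing out that one computation.

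Part (2), however, has a genuine gap. Your exclusion of the vanishing case argues that $\mathbf{1}\in V^{-5/2}(\g)$ is not in the maximal ideal and ``maps to'' $\mathbf{1}\in W^{-5/2}(\g,f_{subreg})$, hence the vacuum of $M_J(1)$ cannot lie in the image of the maximal ideal. This does not work: $H_{f_{subreg}}$ is a BRST cohomology functor whose conformal grading is shifted, $\bar L(0)$ acting on the image of a highest weight vector of $\g$--weight $\mu$ by $\frac{(\mu\,|\,\mu+2\rho)}{2(k+h^\vee)}-\mu(x)$ (cf. Remark \ref{general-comments-vanishing}), so a singular vector of strictly positive affine conformal weight inside the maximal proper submodule of $\widetilde{L}_{-5/2}(\g)$ can perfectly well be sent to a class of reduced conformal weight zero, i.e.\ potentially to the vacuum of $M_J(1)$; there is no grading obstruction of the kind you invoke (and Theorem \ref{conj-p} shows the functor even annihilates whole highest weight modules, so naive vector-level bookkeeping does not control images). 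The paper's proof closes exactly this hole: by Corollary \ref{klas-quot-KL} a hypothetical singular vector has weight $n\omega_1$ or $n\omega_3$, its image has $\bar L(0)$--weight $\frac{n(n+1)}{4}$ resp.\ $\frac{n(n-1)}{4}$, and the only case where this weight is zero with $n>0$, namely $\mu=\omega_3$, is ruled out because the corresponding affine conformal weight would be $5/4\notin\mathbb{Z}$. That the dangerous case $n=1$, $\mu=\omega_3$ actually occurs in the weight count shows your argument would silently pass over a case that requires a separate (non-grading) reason to exclude; without the classification of Corollary \ref{klas-quot-KL} and these weight computations, part (2) is not proved.
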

\begin{proof}
	Since $J^{-5/2} = \langle v \rangle$, Lemma \ref{lem-img-sing} implies that $H_{f_{subreg}} (J^{-5/2})$ contains generator $G^+$ of \linebreak $W^{-5/2} (\g, f_{subreg})$. As in the proof of Theorem \ref{collapsing-1} we conclude that $L   -  4  : J J: \ \in H_{f_{subreg}} (J^{-5/2})$. This implies that
	$$(L   -  4  : J J:)_1 G^- = -2 G^- \in H_{f_{subreg}} (J^{-5/2}).$$
	As in the proof of Theorem \ref{collapsing-1}  we conclude that  $H_{f_{subreg}} (J^{-5/2})$ coincides with the maximal ideal in $W^{-5/2} (\g, f_{subreg})$. Using exactness of QHR functor $H_{f_{subreg}}( \, \cdot \,)$ on the category $KL_{-5/2}$ (cf. \cite{Ara-IMRN}), we get that
	$H_{f_{subreg}} (\widetilde{L}_{-5/2}( \mathfrak{g}) )  = W_{-5/2} (\g, f_{subreg}) = M_J (1)$.
	
	This proves assertion (1).
	
	Using again exactness of the functor $H_{f_{subreg}}( \, \cdot \,)$ we see that
	$H_{f_{subreg}} ({L}_{-5/2}( \mathfrak{g}) ) \cong M_J(1) $ or $H_{f_{subreg}} ({L}_{-5/2}( \mathfrak{g}))  = \{0\}$.
	
	Assume that $H_{f_{subreg}} ({L}_{-5/2}( \mathfrak{g}))  = \{0\}$.  Then $ \widetilde{L}_{-5/2}( \mathfrak{g})$ must contain a singular vector $w_{\mu}$ which is mapped to ${\bf 1} \in M_J(1)$.
	
	The classification result from Corollary \ref{klas-quot-KL} implies that the $\g$--weight of $w_{\mu}$ is $\mu = n \omega_1$ or $\mu= n \omega_3$  for $n >0$. Direct calculation shows that $w_{\mu}$ is a mapped to a highest weight vector in $W_{-5/2} (\g, f_{subreg})$  of $\bar{L}(0)$ weight $\frac{n(n+1)}{4}$ (for $\mu = n \omega_1$)  or $\frac{n(n-1)}{4}$ (for $\mu = n\omega_3$).  This implies that in the case $\mu = n\omega_1$ we have $n=0$. A contradiction. In the case 
	$\mu = n\omega_3$ we have $n=0$ or $n=1$. The case $n =0$ again leads to a contradiction, and the case $n=1$ is also not  possible since then conformal weight of $w_{\mu}$ is equal to $5/4 \notin {\Bbb Z}$. This proves the assertion (2).

\end{proof}

Next result gives some fundamental properties of the functor  $H_{f_{subreg}}$:

\begin{theorem} \label{conj-p} For any   $n \in {\Bbb Z}_{> 0}$ we have:
	
	\item[(P)]  $H_{f_{subreg}} ({L}_{-5/2} (n \omega_3)) \ne \{ 0 \}$   and
	$H_{f_{subreg}} (M ) = \{0\}$    for any highest weight $\widetilde{L}_{-5/2}(\mathfrak{g})$--module $M$  in $KL_{-5/2}$ of $\mathfrak{g}$--weight $n \omega_1$. 
\end{theorem}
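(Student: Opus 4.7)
The plan is to follow the outline given in the introduction. Fix $k=-5/2$ and $\mu \in \{n\omega_1, n\omega_3\}$ for $n > 0$, and write $v_\mu$ for the highest weight vector of the generalized Verma module $V^k(\mu)$. Using the explicit singular vector $v$ of Theorem \ref{thm-singv}, I would first construct by direct computation a singular vector $w_\nu$ of degree $2$ in $V^k(\mu)$ that lies in the submodule $J^k \cdot V^k(\mu)$; concretely, $w_\nu$ arises by applying an appropriate element of $U(\widehat{\mathfrak{n}}_-)$ to $v \cdot v_\mu$, which is possible because $\mu$ lies in a specific cone of weights. Then set
\[
\overline{M}(\mu) \;=\; V^k(\mu)\big/\bigl(J^k \cdot V^k(\mu)\bigr),
\]
so that $\overline{M}(\mu)$ is the universal $\widetilde{L}_{-5/2}(\mathfrak{g})$-module of highest weight $\mu$ in $KL_{-5/2}$; every highest weight module of that weight in $KL_{-5/2}$ is a quotient of $\overline{M}(\mu)$. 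Applying the exact functor $H_{f_{subreg}}$ (cf.\ \cite{Ara-IMRN}) to the short exact sequence
\[
0 \to J^k \cdot V^k(\mu) \to V^k(\mu) \to \overline{M}(\mu) \to 0
\]
yields a short exact sequence of $W^{-5/2}(\mathfrak{g}, f_{subreg})$-modules in which the behaviour of the class $[w_\nu]$ controls the outcome.

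For the vanishing case $\mu = n\omega_1$, the crucial BRST computation I would perform is to show that $[w_\nu]$ is proportional, up to a non-zero scalar, to the generating highest weight vector $[v_\mu]$ of the highest weight $W$-module $H_{f_{subreg}}(V^k(\mu))$. Since $[w_\nu]$ lies in the image of $H_{f_{subreg}}(J^k \cdot V^k(\mu)) \hookrightarrow H_{f_{subreg}}(V^k(\mu))$, this forces $[v_\mu]$ to lie in the same image, and hence to vanish in $H_{f_{subreg}}(\overline{M}(\mu))$. Since the cohomology of a highest weight module is a highest weight $W$-module generated by the image of its highest weight vector, we obtain $H_{f_{subreg}}(\overline{M}(n\omega_1)) = \{0\}$, and the same conclusion propagates to every highest weight quotient of $\overline{M}(n\omega_1)$, giving (P2).

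For the non-vanishing case $\mu = n\omega_3$, the parallel computation should instead produce $[w_\nu]$ as a singular vector of conformal weight one in $H_{f_{subreg}}(V^k(\mu))$, not proportional to $[v_\mu]$; consequently it generates only a proper $W$-submodule. A careful identification of the image of $H_{f_{subreg}}(J^k \cdot V^k(\mu))$ with this proper submodule then shows that $H_{f_{subreg}}(\overline{M}(n\omega_3))$ is a non-trivial quotient of $H_{f_{subreg}}(V^k(\mu))$, hence non-zero. To upgrade non-vanishing from $\overline{M}(n\omega_3)$ to $L_{-5/2}(n\omega_3)$, I would combine exactness of $H_{f_{subreg}}$ with the classification in Corollary \ref{klas-quot-KL}: every proper subquotient of $\overline{M}(n\omega_3)$ in $KL_{-5/2}$ has the form $L_{-5/2}(r\omega_1)$ or $L_{-5/2}(r\omega_3)$ with $r<n$. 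Induction on $n$, together with (P2) for the $\omega_1$-weights already established in the previous paragraph, rules out that the non-zero cohomology class be entirely absorbed by these proper subquotients, and hence $H_{f_{subreg}}(L_{-5/2}(n\omega_3)) \neq \{0\}$.

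The main obstacle is the explicit verification of the asymmetric behaviour of $[w_\nu]$ under $H_{f_{subreg}}$: that it reduces to the generating highest weight vector of $H_{f_{subreg}}(V^k(\mu))$ when $\mu = n\omega_1$, but to a proper singular vector of conformal weight one when $\mu = n\omega_3$. This is where the non-minimal nature of $f_{subreg}$ makes the reduction genuinely more delicate than in the case of $f_\theta$. The computation requires manipulating the lengthy expression for $v$ from Theorem \ref{thm-singv} and determining which of its root-vector components survive the passage modulo the ideal $J_\chi$ of Subsection \ref{subsect-W-alg} after acting on $v_\mu$, using in particular the grading induced by $x = \omega_2 + \omega_3$ from (\ref{rel-x-grading}). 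The asymmetry cannot be obtained by appealing to the diagram automorphism $\sigma$ of Remark \ref{automorphism}, since $\sigma$ maps $x$ to a different semisimple element and therefore does not preserve the chosen reduction; the contrast between $\omega_1$ and $\omega_3$ is a genuine feature of $(f_{subreg}, x)$ that must be checked by explicit calculation.
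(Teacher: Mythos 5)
Your main construction reproduces the paper's argument in Section \ref{singular-new}: the degree-two singular vector $w_\nu$ inside $J^{-5/2}\cdot V^{-5/2}(\mu)$, the universal quotient $\overline M(\mu)$, and the asymmetric behaviour of the image of $w_\nu$ under $H_{f_{subreg}}$ (it becomes the highest weight vector of $H_{f_{subreg}}(V^{-5/2}(n\omega_1))$, but a degree-one singular vector, namely a multiple of $G^+(-1)v_\mu^W$, for $\mu=n\omega_3$) are exactly the paper's Lemmas \ref{sing-1-dod}, \ref{lem-mapping} and \ref{lem-img-sing-modules}, and your deduction of vanishing for all highest weight modules of weight $n\omega_1$ and of $H_{f_{subreg}}(\overline M(n\omega_3))\neq\{0\}$ follows the same route (one omission: $J^{-5/2}\cdot V^{-5/2}(n\omega_3)$ may have a second generating singular vector, of weight $n\omega_3+2\omega_2$, and you must also check that its image has positive degree, which the paper does by a conformal weight computation).

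The genuine gap is in your final step, passing from $\overline M(n\omega_3)$ to $L_{-5/2}(n\omega_3)$. You claim that every proper subquotient of $\overline M(n\omega_3)$ in $KL_{-5/2}$ is $L_{-5/2}(r\omega_1)$ or $L_{-5/2}(r\omega_3)$ with $r<n$, and you induct on $n$. The inequality is false: the minimal conformal weight of $L_{-5/2}(r\omega_i)$ is $\frac{r(r+4)}{4}$, so a subquotient whose top component sits in degree $d>0$ of $\overline M(n\omega_3)$ satisfies $\frac{r(r+4)}{4}=\frac{n(n+4)}{4}+d$, hence $r>n$ (this is also why the proof of Theorem \ref{main-max} considers singular vectors of weight $m\omega_1$, $m\omega_3$ with $m>n$). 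Your induction therefore runs in the wrong direction: the inductive hypothesis you would need concerns strictly larger weights, and there is no base case. The paper closes this step without any induction: $H_{f_{subreg}}(\overline M(n\omega_3))\cong M_J(1,\tfrac{n}{4})$ is an irreducible Heisenberg module whose generator has $\bar L(0)$-weight $\frac{n(n-1)}{4}$; a singular vector of weight $m\omega_3$ in $\overline M(n\omega_3)$ maps, if not to zero, to a vector of $\bar L(0)$-weight $\frac{m(m-1)}{4}\neq\frac{n(n-1)}{4}$ for $m\neq n$, while submodules generated by singular vectors of weight $m\omega_1$ have vanishing cohomology by the part you already proved. Hence no proper submodule can map onto the generator of $M_J(1,\tfrac{n}{4})$, so the generator survives in every highest weight quotient, in particular in $L_{-5/2}(n\omega_3)$. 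Replacing your induction by this conformal weight argument (Proposition \ref{prop-non-zero}) repairs the proof.
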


The proof of Theorem \ref{conj-p} will be given in Section \ref{singular-new}.

\begin{remark} Property (P) from the Theorem \ref{conj-p} is a major difference between the case $\g =sl(4)$ and $k =-5/2$, which we study in the present paper, and the case $\g =sl(4)$ and $k =-1$ presented in Subsection~\ref{review-1}, treated using the minimal reduction functor~$H_{f_{\theta}}$. In the case $k =-1$ we have:
	$$H_{f_{\theta}}({L}_{-1} (n \omega_1)) \ne \{ 0 \} \quad \mbox{and} \quad H_{f_{\theta}}({L}_{-1} (n \omega_3))\ne \{ 0 \}, $$
	and these properties are a very important step in determining the maximal ideal in $V^{-1} (sl(4))$. The idea in the case $k =-5/2$ is to use properties from Theorem \ref{conj-p}, along with the automorphism $\sigma$ from Remark \ref{automorphism}
	that interchanges the weights $n \omega_1$ and $n \omega_3$.
\end{remark}

Theorem \ref{conj-p} gives us an important technical result for proving the following structural result for the simple vertex algebra $L_{-5/2}\left( \mathfrak{g}\right)$ and its module category $KL_{-5/2}$:

\begin{theorem}   \label{main-max}  We have:
	\begin{itemize}
		\item[(i)] $J^{-5/2}$ is the maximal ideal in $V^{-5/2}(sl(4))$, i.e. $L_{-5/2}\left( sl(4)\right) \cong V^{-5/2}(sl(4))/ J^{-5/2} $.
		\item[(ii)] The category $KL_{-5/2}$ is semi-simple.
	\end{itemize}
	
\end{theorem}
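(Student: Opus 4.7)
The plan for part (i) is to apply the exact QHR functor $H_{f_{subreg}}$ to the defining short exact sequence
\[
0 \to J_{\max} \to \widetilde{L}_{-5/2}(\g) \to L_{-5/2}(\g) \to 0,
\]
where $J_{\max}$ is the unique maximal graded ideal of $\widetilde{L}_{-5/2}(\g)$. By Proposition \ref{prop-Hf-L-tilde} both outer terms are identified with $M_J(1)$, and the induced vertex algebra endomorphism of $M_J(1)$ fixes the vacuum, hence is an isomorphism; exactness then forces $H_{f_{subreg}}(J_{\max}) = 0$. Assume for contradiction that $J_{\max} \neq 0$. Its socle contains some simple module $L_{-5/2}(\nu)$ whose weight, by Corollary \ref{klas-quot-KL}, is of the form $n\omega_1$ or $n\omega_3$ for some $n \ge 0$. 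The involution $\sigma$ of Remark \ref{automorphism} lifts to an automorphism of $\widetilde{L}_{-5/2}(\g)$ and preserves $J_{\max}$ by uniqueness, while swapping $n\omega_1 \leftrightarrow n\omega_3$, so we may assume $\nu = n\omega_3$. But $H_{f_{subreg}}(L_{-5/2}(n\omega_3)) \neq 0$---by Proposition \ref{prop-Hf-L-tilde} when $n = 0$ and by property (P) of Theorem \ref{conj-p} when $n > 0$---and this non-zero module embeds into $H_{f_{subreg}}(J_{\max}) = 0$ by exactness, a contradiction. Hence $J_{\max} = 0$.

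For part (ii), given (i), Corollary \ref{klas-quot-KL} supplies the complete list $\{L_{-5/2}(n\omega_1), L_{-5/2}(n\omega_3) : n \ge 0\}$ of irreducibles in $KL_{-5/2}$. My plan is to introduce the two exact functors $H_+ = H_{f_{subreg}}$ and $H_-$ obtained by precomposing $H_+$ with the twist by $\sigma$. By Theorem \ref{conj-p} together with its $\sigma$-conjugate statement, each irreducible is sent to a non-zero irreducible Heisenberg Fock module by at least one of $H_\pm$, so the pair $(H_+, H_-)$ is jointly faithful on simples. A block decomposition of $KL_{-5/2}$ by the quadratic Casimir of $\g$ (which takes value $\tfrac{3n(n+4)}{4}$ on both $L_{-5/2}(n\omega_1)$ and $L_{-5/2}(n\omega_3)$, strictly increasing in $n$) reduces the task to handling extensions inside a single Casimir block, indexed by $n$. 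For such a short exact sequence, applying $H_\pm$ produces sequences of irreducible Heisenberg modules that split in the category of $M_J(1)$-modules; combined with Zhu's theorem and complete reducibility (Weyl) of the top $M(0)$ as a $\g$-module---mediated by the surjection $\mathcal{U}(\g) \twoheadrightarrow A(L_{-5/2}(\g))$ of Proposition \ref{Zhu-algebra}---this should lift to a splitting of $M$ in $KL_{-5/2}$.

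The main obstacle is the delicate ``mixed'' extension case within a single Casimir block, namely ruling out non-split extensions between $L_{-5/2}(n\omega_1)$ and $L_{-5/2}(n\omega_3)$ for the same $n$: these share the same $\g$-Casimir value and the same conformal weight, so are not separated by standard central invariants. The leverage comes from the asymmetric behaviour of the QHR functors under $\sigma$, namely $H_+$ kills $L_{-5/2}(n\omega_1)$ but not $L_{-5/2}(n\omega_3)$ while $H_-$ does the reverse; a non-split extension would produce incompatible images under $H_\pm$. Pushing this through precisely, using the explicit form of $A(L_{-5/2}(\g)) = \mathcal{U}(\g)/\langle v' \rangle$ from Proposition \ref{Zhu-algebra} to rigidly control $M(0)$, is the point where the most care is needed.
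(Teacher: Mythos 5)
Your part (i) is essentially the paper's argument: exactness of $H_{f_{subreg}}$, Proposition \ref{prop-Hf-L-tilde}, the automorphism $\sigma$, the classification in Corollary \ref{klas-quot-KL}, and the non-vanishing from Theorem \ref{conj-p} are combined in the same way. One slip: you take a simple submodule in the socle of $J_{\max}$, but without already knowing finite length (or semisimplicity) there is no reason the socle is non-zero. The paper avoids this by working with a singular vector in the lowest graded piece of the ideal and the highest weight submodule $N$ it generates; non-vanishing of $H_{f_{subreg}}(N)$ then follows either from Proposition \ref{prop-non-zero} (which covers all highest weight modules of weight $n\omega_3$, not just the simple one) or, inside your own framework, from right-exactness applied to the surjection $N\twoheadrightarrow L_{-5/2}(n\omega_3)$. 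So (i) is repairable with the tools you already invoke.

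Part (ii), however, has genuine gaps and diverges from what actually closes the argument. First, the ``block decomposition by the quadratic Casimir of $\g$'' is not available: the Casimir of $\g$ is not central in $\mathcal{U}(\hat{\g})$ and does not act as an endomorphism of objects of $KL_{-5/2}$, so it induces no decomposition of the category; the natural separating datum is $L(0)$ modulo ${\Bbb Z}$, and that does not separate, e.g., $n=0$ (conformal weight $0$) from $n=2$ (conformal weight $3$), so the reduction to ``a single block indexed by $n$'' fails. Second, splitting of the images under $H_{\pm}$ cannot be transported back: each of $H_{\pm}$ annihilates one of the two families of simples, and semisimplicity of the top level $M(0)$ over $A(L_{-5/2}(\g))$ says nothing about extensions whose submodule begins in positive conformal degree, so nothing in your plan rules out exactly the extensions that matter (your own ``mixed case'' paragraph concedes this is unresolved, and in fact the equal-$n$, equal-conformal-weight case you single out is the easy one, since $n\omega_3\not\le n\omega_1$). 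Third, even granting $\mathrm{Ext}^1$-vanishing between simples, semisimplicity of all of $KL_{-5/2}$ still needs a finite-length/generation statement. The paper handles all of this at once: it shows that every highest weight $L_{-5/2}(\g)$--module in $KL_{-5/2}$ is irreducible, by a four-case analysis of a putative singular vector of weight $m\omega_1$ or $m\omega_3$ inside a highest weight module of weight $n\omega_1$ or $n\omega_3$, using exactness of $H_{f_{subreg}}$, Theorem \ref{conj-p} and $\sigma$, and then invokes the criterion of \cite[Theorem 5.5]{AKMPP-IMRN} (irreducibility of all highest weight modules in $KL_k$ implies semisimplicity of $KL_k$). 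This criterion, or some substitute for it, is the missing ingredient in your proposal.
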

\begin{proof}
	Proposition \ref{prop-Hf-L-tilde} gives that $H_{f_{subreg}} (\widetilde{L}_{-5/2}( \mathfrak{g}) )  = W_{-5/2} (\g, f_{subreg}) = M_J (1)$. If $\widetilde{L}_{-5/2}( \mathfrak{g}) $ is not simple, the classification result from Corollary \ref{klas-quot-KL} implies that it must contain a non-trivial singular vector $w_{\mu}$  of $\mathfrak{g}$--weight 
	$\mu=n\omega_1$ or $\mu=n \omega_3$, for $n\in \Bbb{Z}_{> 0}$.
	Let $\sigma$ be the automorphism of $V^{-5/2} (\mathfrak{g})$ of order two from Remark \ref{automorphism}.
	
	Since $\sigma$ fixes singular vector $v$, it is also an automorphism of $\widetilde{L}_{-5/2}(\mathfrak{g})$ and it must map  any (possible) singular vector of weight $n \omega_1$ to a singular vector of weight $n\omega_3$. We thus conclude that   $\widetilde{L}_{-5/2}(\mathfrak{g})$ contains a singular vector of weight $\mu = n \omega_3$ for $n >0$. Theorem \ref{conj-p} then implies that  the ideal generated by this singular vector is mapped  by the QHR functor $H_{f_{subreg}}$ to an non-trivial ideal in the vertex algebra $ W_{-5/2} (\g, f_{subreg})$, which is simple. By using exactness of  the functor $H_{f_{subreg}}$,  we get   that  $H_{f_{subreg}} (L_{-5/2} (\g) ) = \{0\}$,   which is a contradiction with Proposition \ref{prop-Hf-L-tilde} (2). This proves (i), i.e. $L_{-5/2}( \mathfrak{g}) \cong \widetilde{L}_{-5/2}( \mathfrak{g})$.
	
	The similar  arguments as in (i) show that any highest weight module in $KL_{-5/2}$ must be irreducible. Let us prove this statement. Consider the
	highest weight module $L_{-5/2} (\mathfrak{g})$--module $M(\mu)$ of the $\mathfrak{g}$--weight $\mu = n \omega_1$ or $\mu=n \omega_3 $ for $n \in {\Bbb Z}_{>0}$. Assume that $M(\mu)$ is not irreducible. Then there is a singular vector $z_{\nu} \in M(\mu)$ of highest weight $\nu = m \omega_3$ or $\nu = m \omega_1$, where $m \in {\Bbb Z}, m > n$.  Denote by $Z({\nu}) = L_{-5/2} (\mathfrak{g}). z_{\nu}$.
	By applying QHR functor $H_{f_{subreg}}( \, \cdot \,)$  we shall see that all cases lead to a contradiction to Theorem \ref{conj-p}.
	
	\begin{itemize}
		\item[(1)]  The case $\mu = n\omega_3$, $\nu = m \omega_3$. We get 
		$   H_{f_{subreg}}(M(\mu) ) = H_{f_{subreg}}(Z(\nu) )  = M_J(1, a)$ for certain $a \in {\Bbb C}$. This implies $H_{f_{subreg}} ({L}_{-5/2} (n \omega_3)) = \{0\}$. A contradiction.
		
		\item[(2)]  The case $\mu = n\omega_1$, $\nu = m\omega_3$. We get 
		$   H_{f_{subreg}}(M(\mu) ) = \{0\},  H_{f_{subreg}}(Z(\nu) )  = M_J(1, a)$ for certain $a \in {\Bbb C}$.  This implies that  $H_{f_{subreg}}( \, \cdot \,)$  sends  exact sequence
		$$ 0 \rightarrow Z(\nu) \rightarrow M({\mu})  \rightarrow  M({\mu}) / Z(\nu) \rightarrow 0 $$
		to the non-exact sequence 
		$$ 0 \rightarrow  M_J (1, a) \rightarrow 0 \rightarrow  0 \rightarrow 0. $$
		A contradiction.
		
		\item[(3)] The case $\mu = n\omega_1$, $\nu = m\omega_1$. Using  (1)  and applying the automorphism $\sigma$ we get a contradiction.

		\item[(4)] The case $\mu = n\omega_3$, $\nu = m\omega_1$. Using  (2)  and applying the automorphism $\sigma$ we get a contradiction.

	\end{itemize}

	Recall that \cite[Theorem 5.5]{AKMPP-IMRN} says that if any highest weight $L_k(\g)$--module $M$ in $KL_k$ is irreducible, then the category $KL_k$ is semi-simple. So $KL_{-5/2}$ is semi-simple.

	 \end{proof}

\section{Singular vectors in $V^k (n \omega_1)$ and $V^k(n \omega_3)$ and the proof of Theorem \ref{conj-p}}
\label{singular-new}

In this section we give a proof of Theorem \ref{conj-p}.  The main method in the proof is based on a construction of singular vectors in generalized Verma modules $V^k (n \omega_i)$, $i=1,3$, and the description of their submodules $J^k \cdot V^k(n \omega_i)$. As a consequence, we construct universal $\widetilde L_k(sl(4))$--modules $\overline M(n \omega_i)$, for which we prove vanishing and non-vanishing of    $H_{f_{subreg}} ( \overline M(n \omega_i))$.

\subsection{The universal $\widetilde L_{-5/2}(\g)$--modules 
$\overline M(n\omega_1)$ and  $\overline M(n\omega_3)$}For an irreducible $\g =sl(4)$--module $V(\mu)$, we define the generalized Verma module of level $k$:
$$ V^k(\mu):=U(\hat{\g}) \otimes_{U(\g \otimes {\Bbb C}[t]+ CK)} V(\mu). $$

Then, $V^k(\mu)$ is a ${\Bbb Z}_{\ge 0}$--graded module:
$$ V^k(\mu)= \bigoplus_{m \in {\Bbb Z}_{\ge 0}}  V^k(\mu) (m),$$
$$ L(0) \vert V^k(\mu) (m) \equiv \left (m+ \frac{ (\mu \, | \, \mu + 2 \rho)}{2 (k+ h^{\vee})} \right) \ \mbox{id}.$$
For $w \in V^k(\mu) (m)$, we write $\deg(w) = m$. Let us denote by $v_{\mu}$ the highest weight vector of $V^k(\mu)$.

Let $k =-\frac{5}{2}$. Recall that $V^k(\g)$ contains  singular vector  $v$ of $\g$--weight $2 \omega_2$ and conformal weight~$4$. So, we have a non-trivial $\hat{\g}$--homomorphism:
$$ V^k (2 \omega_2) \rightarrow V^k(\g). $$

 Next we are interested in a construction of the universal  $\widetilde L_{-5/2}(\g)$--module $\overline M(\mu)$ in $KL_{-5/2}$, for highest weights $\mu = n \omega_1$ and $\mu = n \omega_3$. It is realized as
$$ \overline M(\mu) = \frac{V^k(\mu)}{ J^k \cdot V^k(\mu)}, $$
where 
$J^k \cdot V^k(\mu ) =\mbox{span} \{ a_n w \ \vert \ a \in J^k, w \in V^k(\mu), n \in \Bbb Z\}$ is a $V^k(\g)$--submodule of  $V^k(\mu)$ generated by the action of $J^k$. Since the top component of $V^k(\mu)$ does not belong to  $J^k \cdot V^k(\mu )$, we get that   $J^k \cdot V^k(\mu )$ is a proper submodule of  $V^k(\mu )$. So, module  $\overline M(\mu)$ is not zero, and $L_k(\mu)$ is its simple quotient.

\begin{lemma} Let $n \in {\Bbb Z}_{>0}$. We have:
	\item[(1)] As a $V^k(\g)$--module,  $J ^k  \cdot  V^k(n \omega_1)$  is generated by singular vectors $w_{\nu}$ of $\g$--weight  $\nu$ for some
	$$ \nu \in \{ n \omega_1 + 2 \omega_2, ( n-1) \omega_1 + \omega_2 + \omega_3, ( n-2) \omega_1 + 2\omega_3   \}. $$
	
	\item[(2)]  As a $V^k(\g)$--module, $J^k \cdot  V^k(n \omega_3)$ is generated by singular vectors  $w_{\nu}$ of $\g$--weight $\nu$ for some
	$$ \nu \in \{ n \omega_3 + 2 \omega_2, ( n-1) \omega_3 + \omega_1 + \omega_2, ( n-2) \omega_3 + 2\omega_1   \}. $$
\end{lemma}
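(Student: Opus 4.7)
The plan is to identify the $\hat{\g}$-singular vector generators of $J^k\cdot V^k(n\omega_1)$ by tracking the action of $v$ on the highest weight vector $v_{n\omega_1}$. The key input is that $J^k=U(\hat{\g})\cdot v$, so at its lowest conformal weight (which is $4$) the ideal $J^k$ coincides, as a $\g$-module under the degree-zero action, with the irreducible module $V(2\omega_2)\subset V^k(\g)(4)$ generated by $v$. Consequently $J^k\cdot V^k(n\omega_1)$ is the $\hat{\g}$-submodule of $V^k(n\omega_1)$ generated by $\{w(j)v_{n\omega_1}:w\in V(2\omega_2),\ j\in\mathbb{Z}\}$.

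First, I would check that $w(j)v_{n\omega_1}$ is annihilated by $a(m)$ for all $a\in\g$ and $m\geq 1$ whenever $j\geq 2$. Since $v$ is $\hat{\g}$-singular, $a(k)v=0$ for $k\geq 1$; the Borcherds commutator formula propagates this to $a(k)w=0$ for all $w\in V(2\omega_2)$ and $k\geq 1$, whence
\begin{equation*}
a(m)\cdot w(j)v_{n\omega_1}=(a\cdot w)(m+j)v_{n\omega_1},
\end{equation*}
which vanishes as soon as $m+j\geq 4$. For $j\geq 4$ the vector itself vanishes for conformal-weight reasons, and for $j=3$ it lies in the top $V^k(n\omega_1)(0)=V(n\omega_1)$.

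Next, the assignment $w\otimes v_{n\omega_1}\mapsto w(j)v_{n\omega_1}$ extends $\g$-equivariantly to a map $V(2\omega_2)\otimes V(n\omega_1)\to V^k(n\omega_1)(3-j)$. A direct Weyl-dimension count for $sl(4)$ yields
\begin{equation*}
V(2\omega_2)\otimes V(n\omega_1)\cong V(n\omega_1+2\omega_2)\oplus V((n-1)\omega_1+\omega_2+\omega_3)\oplus V((n-2)\omega_1+2\omega_3)
\end{equation*}
for $n\geq 2$, with the non-dominant summands omitted for $n\in\{0,1\}$. Since $V(n\omega_1)$ does not appear in this decomposition, the $j=3$ map is zero, and so $J^k\cdot V^k(n\omega_1)$ has trivial degree-zero part. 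For $j=2$, the $\g$-highest weight vectors of the three summands give $\hat{\g}$-singular vectors in $V^k(n\omega_1)(1)$ of exactly the three listed $\g$-weights.

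Finally, I would show these degree-$1$ singular vectors generate the whole submodule: the remaining candidate generators $w(j)v_{n\omega_1}$ with $j\leq 1$ sit at degree $\geq 2$ and are recovered from $w(2)v_{n\omega_1}$ by applying negative modes of $\hat{\g}$, using $[a(-n),w(2)]=(a\cdot w)(2-n)$ together with an induction on degree. Assertion~(2) follows either by repeating the argument for $V^k(n\omega_3)$ with the analogous decomposition of $V(2\omega_2)\otimes V(n\omega_3)$, or more elegantly by invoking the diagram automorphism $\sigma$ of Remark~\ref{automorphism}, which fixes $v$ (hence $J^k$) and exchanges $\omega_1\leftrightarrow\omega_3$. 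The main technical obstacle is the Clebsch--Gordan decomposition of $V(2\omega_2)\otimes V(n\omega_1)$ and checking it has exactly the three listed highest-weight summands; once that is in hand, the rest is routine bookkeeping with the Borcherds formula.
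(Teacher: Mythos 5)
Your setup is fine: reducing $J^k\cdot V^k(n\omega_1)$ to the $U(\hat{\g})$-span of the vectors $w(j)v_{n\omega_1}$, $w\in V(2\omega_2)$, the Clebsch--Gordan decomposition of $V(2\omega_2)\otimes V(n\omega_1)$ (which is the same decomposition the paper uses), the vanishing for $j\geq 3$, and the use of $\sigma$ for part (2) are all correct. The proof breaks down in the last two steps. First, your proposed generators at degree one are necessarily zero. A $\hat{\g}$-singular vector in $V^k(\mu)$ of $\g$-weight $\nu$ must sit at degree $h_\nu-h_\mu$ by the Sugawara $L(0)$-eigenvalue; for the three admissible weights this degree is $4+\tfrac{2n}{3}$, $2$, and $\tfrac{2(2-n)}{3}$ respectively (exactly the computation in the paper's Lemma \ref{sing-1-dod}), never $1$. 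Equivalently: since you have shown the degree-zero part of the submodule vanishes, every vector of its degree-one part is killed by all positive modes, and the mismatch $h_\mu+1\neq h_\nu$ forces that degree-one part to be $\{0\}$. The genuine singular generator lies at degree $2$ (Proposition \ref{prop-formula-singv-mod}), so there is nothing at degree one to generate from. Second, even granting nonzero degree-one vectors, your generation induction is circular: the identity $[a(-m),w(2)]=(a\cdot w)(2-m)$ only gives $(a\cdot w)(2-m)v_{\mu}=a(-m)\,w(2)v_{\mu}-w(2)\,a(-m)v_{\mu}$, and the second term is a $w$-mode applied to a deeper vector of $V^k(\mu)$, which you have not shown to lie in the submodule generated by the $w'(2)v_{\mu}$; you can only conclude that the sum does. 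Since in fact all $w'(2)v_{\mu}=0$ while $J^k\cdot V^k(n\omega_1)\neq\{0\}$ (e.g.\ $v_{(-1)}v_{n\omega_1}\neq 0$), the claimed generation is false, not merely unproved.

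The paper avoids this trap by not trying to locate generators at a prescribed degree: the restriction of the module vertex operator is a surjective intertwining operator of type ${J^k\cdot V^k(\mu) \choose J^k \ \ V^k(\mu)}$, and a fusion-rules bound based on the same tensor product decomposition shows that $J^k\cdot V^k(\mu)$ is a quotient of a direct sum of generalized Verma modules $V^k(\nu)$ with $\nu$ among the three listed weights; hence it is generated by singular vectors of those weights, whose degrees are determined only afterwards (Lemma \ref{sing-1-dod}). To repair your argument along your own lines you would need an analogous statement: that the span of all modes $w(j)x$, $w\in V(2\omega_2)$, $x\in V(\mu)$, is generated over $U(\hat{\g})$ by images of the highest weight vectors of the three constituents at whatever degrees they actually occur, which is precisely the content of the intertwining-operator argument, not of the single commutator identity you invoke.
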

 \begin{proof}
 
 Assume that $M_1$ and $M_2$ are $V^k (\g)$--modules in $KL^k$ such that there exist surjective intertwining operators of types
 $$ { M_1 \choose J^k \ \ V^k(n \omega_1) }, \quad { M_2 \choose J^k \ \ V^k(n \omega_3) }. $$
 Then by using the fusion rules arguments  and the following decompositions of  tensor products of $\g$--modules:
$$ V(2 \omega_2) \otimes V(n \omega_1) = V (n \omega_1 + 2 \omega_2) \oplus
V (( n-1) \omega_1 + \omega_2 + \omega_3) \oplus V (( n-2) \omega_1 + 2\omega_3), $$
$$ V(2 \omega_2) \otimes V(n \omega_3) = V (n \omega_3 + 2 \omega_2) \oplus
V (( n-1) \omega_3 + \omega_1 + \omega_2) \oplus V (( n-2) \omega_3 + 2\omega_1), $$
we get that in the category $V^k(\g)$--modules we have 
\bea
M_1 &\subset& \widetilde V ^k(n \omega_1 + 2 \omega_2) +
\widetilde V ^k (( n-1) \omega_1 + \omega_2 + \omega_3) +  \widetilde V^k (( n-2) \omega_1 + 2\omega_3) \nonumber \\
M_2  &\subset& \widetilde V ^k(n \omega_3 + 2 \omega_2) +
 \widetilde V ^k (( n-1) \omega_3 + \omega_1 + \omega_2) +  \widetilde  V^k (( n-2) \omega_3 + 2\omega_1), \nonumber
\eea 
where $\widetilde V^k(\lambda)$ denotes certain quotient of $V^k(\lambda)$.

Since the restriction of the vertex operator on $V^k (\mu)$ gives a non-trivial surjective intertwining operator of type
$$ { J^k \cdot V^k(\mu) \choose J^k \ \ V^k(\mu) },$$
we have proved  that there exist nontrivial surjective  homomorphisms
 \bea
  V ^k(n \omega_1 + 2 \omega_2) +
  V ^k (( n-1) \omega_1 + \omega_2 + \omega_3) +    V^k (( n-2) \omega_1 + 2\omega_3) &\rightarrow& J^k \cdot V^k(n \omega_1), \nonumber \\
 V ^k(n \omega_3 + 2 \omega_2) +
  V ^k (( n-1) \omega_3 + \omega_1 + \omega_2) +   V^k (( n-2) \omega_3 + 2\omega_1)& \rightarrow& J^k \cdot V^k(n \omega_3). \nonumber
\eea
The proof follows.
\end{proof}

\begin{lemma} \label{sing-1-dod}
	Let $\mu = n \omega_1$ or $\mu = n \omega_3$, $n \in {\Bbb Z}_{>0}$.  Then  $J^k \cdot V^k(\mu)$ contains a non-trivial singular vector $w_{\nu}$ such that $\deg(w_{\nu}) = 2$  
	and whose $\g$--weight is equal to  $\nu  = (n-1) \omega_1 +  \omega_2 + \omega_3$  (for $\mu = n \omega_1$) or  $\nu  = (n-1) \omega_3 + \omega_1+  \omega_2$ (for $\mu = n \omega_3$). 
\end{lemma}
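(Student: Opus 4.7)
The plan is to reduce to a single case by the diagram symmetry, produce the candidate singular vector via the Kac--Kazhdan theorem, and then force it to lie in the ideal by invoking the classification of irreducibles already established. By Remark \ref{automorphism}, the diagram automorphism $\sigma$ fixes $v$ (hence the ideal $J^k$) and exchanges $\omega_1 \leftrightarrow \omega_3$, intertwining $V^k(n\omega_1)$ with $V^k(n\omega_3)$ and the two target weights $(n-1)\omega_1 + \omega_2 + \omega_3$ and $(n-1)\omega_3 + \omega_1 + \omega_2$. It therefore suffices to treat the case $\mu = n\omega_1$, with $\nu = (n-1)\omega_1 + \omega_2 + \omega_3$.

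Set $\hat\mu = k\Lambda_0 + n\omega_1$, $\hat\nu = k\Lambda_0 + \nu - 2\delta$, and $\hat\beta = 2\delta - (\alpha_2 + \alpha_3)$, so that $\hat\mu - \hat\nu = \hat\beta$ is a positive real root of $\hat{\mathfrak g}$. Using $k + h^\vee = 3/2$, $(\Lambda_0 \mid \delta) = 1$, $(\omega_1 \mid \alpha_2 + \alpha_3) = 0$, and $(\rho \mid \alpha_2 + \alpha_3) = 2$, one computes $(\hat\mu + \hat\rho \mid \hat\beta) = 1$ and $(\hat\beta \mid \hat\beta) = 2$, so the Kac--Kazhdan condition $2(\hat\mu + \hat\rho \mid \hat\beta) = m(\hat\beta \mid \hat\beta)$ holds with $m = 1$. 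Hence the Verma module $M(\hat\mu)$ carries a nonzero singular vector of affine weight $\hat\nu$; projecting it to the generalized Verma module $V^k(\mu)$ yields a candidate $w_\nu$ of degree $2$ and $\mathfrak g$-weight $\nu$. The nonvanishing of $w_\nu$ in $V^k(\mu)$ is then verified by an explicit construction of it as a suitable combination of expressions $v_i\,x_1(-n_1)\cdots x_r(-n_r)w$ with $w \in V(\mu)$, $x_j \in \mathfrak g$, and $\sum n_j = i - 3$, together with a direct check of the singular-vector relations $e_{\alpha_j}(0)\,w_\nu = 0$ and $x(m)\,w_\nu = 0$ for $m \geq 1$.

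To conclude $w_\nu \in J^k \cdot V^k(\mu)$, consider its image $\bar w_\nu$ in the universal $\widetilde L_{-5/2}(\mathfrak g)$-module $\overline M(\mu) = V^k(\mu)/(J^k \cdot V^k(\mu))$. The module $\overline M(\mu)$ lies in $KL_{-5/2}$, so any nonzero $\hat{\mathfrak g}$-singular vector in it would generate a submodule whose simple head is an irreducible $\widetilde L_{-5/2}(\mathfrak g)$-module in $KL_{-5/2}$ with top $\mathfrak g$-weight $\nu = (n-1)\omega_1 + \omega_2 + \omega_3$. But Corollary \ref{klas-quot-KL} lists all such irreducibles and shows the top weight must be of the form $t\omega_1$ or $t\omega_3$. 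This forces $\bar w_\nu = 0$, i.e.\ $w_\nu \in J^k \cdot V^k(\mu)$. The hardest step is the descent to $V^k(\mu)$: although $\nu$ strictly exceeds $\mu$, the kernel of $M(\hat\mu) \twoheadrightarrow V^k(\mu)$ does contain vectors of weight $\nu$ at degree $2$ (obtained from the generators $f_{\alpha_i} v_{\hat\mu}$ and $f_{\alpha_1}^{n+1} v_{\hat\mu}$ by acting with positive root vectors at negative modes), so a clean weight argument is unavailable and the explicit construction of $w_\nu$ --- in the spirit of Theorem \ref{thm-singv} --- appears to be required.
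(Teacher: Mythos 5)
Your Kac--Kazhdan numerology is correct ($\hat\beta=2\delta-(\alpha_2+\alpha_3)$ is a positive real root, $(\hat\mu+\hat\rho\,|\,\hat\beta)=1=(\hat\beta|\hat\beta)/2$, so $M(\hat\mu)$ has a singular vector of weight $\hat\mu-\hat\beta$, at relative degree $2$), and your closing argument -- that the image of $w_\nu$ in $\overline M(\mu)=V^k(\mu)/(J^k\cdot V^k(\mu))$ must vanish because an irreducible $\widetilde L_{-5/2}(\g)$--module in $KL_{-5/2}$ cannot have top weight $\nu$ by Corollary \ref{klas-quot-KL} -- is a legitimate way to obtain membership in $J^k\cdot V^k(\mu)$ \emph{once} a nonzero singular vector of weight $\nu$ and degree $2$ in $V^k(\mu)$ is in hand. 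The genuine gap is precisely the step you flag and do not carry out: nonvanishing of the projection of the Verma singular vector to the generalized Verma module. Kac--Kazhdan alone cannot give this, and the failure mode is real in exactly this setting: as the paper remarks after Theorem \ref{thm-singv}, for the vacuum module the MFF singular vector of the corresponding weight projects to zero in $V^{-5/2}(\g)$, and the relevant vector is only subsingular in the Verma module. Saying that an explicit construction ``appears to be required'' is not a proof; moreover your sketch of what to construct is internally inconsistent -- if $w_\nu$ is exhibited as a combination of elements $v_i\,x_1(-n_1)\cdots x_r(-n_r)w$ (modes of $v$ applied to $V^k(\mu)$; the degree count should then be $\sum n_j=i-1$, not $i-3$), membership in $J^k\cdot V^k(\mu)$ is automatic and your third paragraph is redundant, while nonvanishing and singularity still require the very computation you postpone.

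The paper proves the lemma without any explicit formula, and this is the idea missing from your proposal. Since $J^k$ is a quotient of $V^k(2\omega_2)$, the action of $J^k$ on $V^k(\mu)$ gives a surjective intertwining operator of type $\binom{J^k\cdot V^k(\mu)}{J^k\ \ V^k(\mu)}$, so by the decomposition of $V(2\omega_2)\otimes V(n\omega_1)$ the submodule $J^k\cdot V^k(\mu)$ is generated by singular vectors whose $\g$--weights lie in the three-element set of the preceding lemma; the Sugawara formula then forces their degrees to be $4+\tfrac{2}{3}n$, $2$, or $\tfrac{2(2-n)}{3}$. Because $v_{-1}v_\mu\neq 0$ has degree $4$, the submodule is nonzero and cannot be generated solely in degrees $>4$, while the third weight is impossible (its degree is never a positive integer, and degree $0$ would put a singular vector of the wrong weight in the top component); hence a degree-$2$ singular vector of weight $\nu$ must occur inside $J^k\cdot V^k(\mu)$, which gives existence and membership simultaneously. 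The explicit formulas you would need are only produced later, in Proposition \ref{prop-formula-singv-mod}, and for a different purpose (computing images under $H_{f_{subreg}}$); if you intend your route to stand on its own, you must actually carry out that construction and the verification of $e_{\alpha_j}(0)w_\nu=0$, $f_\theta(1)w_\nu=0$, rather than defer it.
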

\begin{proof}
	We see directly that $v_{-1} v_{\mu}$ is a non-zero element of degree $4$ of the  submodule   $J^k \cdot V^k(\mu)$. Therefore, it  must contain a singular vector of degree $1$, $2$, $3$ or $4$. 
	
	Assuming that there is a singular vector $w_{\nu}$ of certain $\g$--weight $\nu$, one can easily calculate degrees of the singular vectors:

\begin{itemize}
	\item $\nu  = n \omega_1 + 2 \omega_2$ (resp. $\nu  = n \omega_3 + 2 \omega_2$), then in  $V^k(n \omega_1) $ (resp. $V^k(n \omega_3) $),  $\deg (w_{\nu}) = 4 + \frac{2}{3} n$.
	\item $\nu  = (n-1) \omega_1 +  \omega_2 + \omega_3$ (resp. $\nu  = (n-1) \omega_3 + \omega_1+  \omega_2$), then in  $V^k(n \omega_1) $ (resp. $V^k(n \omega_3) $),  $\deg (w_{\nu}) = 2$.
	\item $\nu  = (n-2) \omega_1 + 2 \omega_3$ (resp. $\nu  = (n-2) \omega_3 + 2 \omega_1$), then in  $V^k(n \omega_1) $ (resp. $V^k(n \omega_3) $),  $\deg (w_{\nu}) =\frac{2\left( 2-n \right) }{3} $.
\end{itemize}

This implies that $J^k \cdot V^k(\mu)$ must contain a non-trivial singular vector of degree $2$ and weight  $\nu  = (n-1) \omega_1 +  \omega_2 + \omega_3$  (for $\mu = n \omega_1$) or  $\nu  = (n-1) \omega_3 + \omega_1+  \omega_2$ (for $\mu = n \omega_3$). The proof follows.

\end{proof}

The explicit formulas for these singular vectors of degree two are very complicated and they   will be given in Proposition \ref{prop-formula-singv-mod}.
Now for $\mu = n\omega_1$ and $\nu =  (n-1) \omega_1 +  \omega_2 + \omega_3$ (resp.
for $\mu = n\omega_3 $  and  $\nu  = (n-1) \omega_3 + \omega_1+  \omega_2$), we can define the quotient module:

\begin{equation} \label{rel-M-ni-1}
M(\mu) = \frac{V^k(\mu)}{ V^k(\g). w_{\nu}}.
\end{equation}
Note that in general we don't claim that $M(\mu)$ is an $\widetilde L_k(\g)$--module, but any $\widetilde L_k(\g)$--module in $KL_k$ must be a quotient of $M(\mu)$.
Since the degree of a singular vector must be a positive integer, we  get the following description of the universal $\widetilde L_k(\g)$--modules in $KL_k$: 

\begin{proposition} For the universal  $\widetilde L_{-5/2}(\g)$--module $\overline{M}(n \omega_i)$ in $KL_{-5/2}$ with $\g$--weight $n \omega_i$, for $i=1,3$ we have:
\begin{itemize}
	\item $\overline{M}(n \omega_i) = M(n \omega_i)$ if $\frac{2}{3}n \notin {\Bbb Z}_{\ge 0}$,
	\item  $\overline{M}(n \omega_i)$ is a quotient of    $M(n \omega_i)$ by a singular vector $w_{\nu'}$ of weight $\nu'= n \omega_i + 2 \omega_2$, if  $\frac{2}{3}n \in {\Bbb Z}_{\ge 0}$ and if such singular vector exists.
\end{itemize}   
\end{proposition}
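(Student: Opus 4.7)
The plan is to combine the preceding lemma, which identifies at most three $\g$-weights of singular vectors that can generate $J^k\cdot V^k(\mu)$, with the degree computations already recorded in the proof of Lemma \ref{sing-1-dod}, and then to rule out those candidate weights whose singular vector cannot actually occur in the $\mathbb{Z}_{\ge 0}$-graded module $V^k(\mu)$. The governing principle is that any singular vector in $V^k(\mu)$ sits in a single homogeneous component, so its degree must be a non-negative integer; this will reduce the generating set to the single already-known degree-two vector in the generic case, and will admit at most one additional generator of weight $n\omega_i+2\omega_2$ in the degenerate case.

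For $\mu=n\omega_1$ with $n\in\mathbb{Z}_{>0}$ I would inspect each of the three candidate weights in turn. The weight $(n-2)\omega_1+2\omega_3$ gives degree $\tfrac{2(2-n)}{3}$; this is negative for $n\ge 3$ and equals $\tfrac{2}{3}$ for $n=1$, while for $n=2$ the degree vanishes but then such a vector would have to lie in the top $V(2\omega_1)$, which does not contain the weight $2\omega_3$. Hence this candidate is excluded for every $n>0$. The weight $(n-1)\omega_1+\omega_2+\omega_3$ gives degree $2$ and genuinely produces a singular vector $w_\nu$ by Lemma \ref{sing-1-dod}. The weight $n\omega_1+2\omega_2$ gives degree $4+\tfrac{2n}{3}$, which is a non-negative integer exactly when $\tfrac{2n}{3}\in\mathbb{Z}_{\ge 0}$.

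Combining these, when $\tfrac{2n}{3}\notin\mathbb{Z}_{\ge 0}$ the submodule $J^k\cdot V^k(n\omega_1)$ equals $V^k(\g)\cdot w_\nu$, so $\overline{M}(n\omega_1)=M(n\omega_1)$ by the definition (\ref{rel-M-ni-1}); when $\tfrac{2n}{3}\in\mathbb{Z}_{\ge 0}$ the submodule may contain an additional singular vector $w_{\nu'}$ of weight $\nu'=n\omega_1+2\omega_2$, and $\overline{M}(n\omega_1)$ is the corresponding further quotient of $M(n\omega_1)$. The case $\mu=n\omega_3$ would then be deduced from the case $\mu=n\omega_1$ by applying the diagram automorphism $\sigma$ of Remark \ref{automorphism}, which fixes $v$ and therefore the ideal $J^k$, and which interchanges $\omega_1\leftrightarrow\omega_3$ while fixing $\omega_2$. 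The whole argument is a bookkeeping consequence of the preceding structural lemmas; the only slightly delicate point is the $n=2$ boundary check for the excluded candidate, which is the unique place where the naive degree bound is insufficient and a weight-space argument in the top component is needed.
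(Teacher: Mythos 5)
Your argument is essentially the paper's: the paper derives the proposition from the same two lemmas by exactly this degree bookkeeping, noting that a generator of $J^{k}\cdot V^{k}(\mu)$ must sit in positive integer degree (the top component is excluded because it was already observed not to lie in $J^{k}\cdot V^{k}(\mu)$), which rules out the weight $(n-2)\omega_i+2\omega_{i'}$ including the $n=2$ boundary case that you instead settle by a weight-space check in $V(2\omega_1)$. Your use of the automorphism $\sigma$ for the $n\omega_3$ case is a harmless cosmetic variation, since the paper's lemmas already treat both weights symmetrically.
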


\subsection{Formulas for singular vectors of degree $2$ in $V^k (n \omega_1)$ and $V^k(n \omega_3)$  and their consequences}

 \label{subsect-singv-modules}

In this subsection we determine the explicit formulas for singular vectors of degree two in $V^k(\mu)$, and their images in $H_{f_{subreg}} (V^k(\mu ))$, for $\mu = n \omega_1$ and $\mu = n \omega_3$. We omit details of the proofs because of their similarities with some proofs from previous sections.

Let $\g =sl(4)$ and $k =-\frac{5}{2}$.

\begin{proposition} \label{prop-formula-singv-mod}
Let $n \in {\Bbb Z}_{>0}$. We have:
\begin{itemize}	
\item[(1)] For  $\mu = n \omega_1$, the following vector  $w_{\nu}$ is the (unique, up to a scalar) singular vector in $V^{k}(\mu)$ of degree~$2$ and $\mathfrak{g}$--weight $\nu=(n-1)\omega_1 + \omega_2 + \omega_3$:
\begin{align*}
	w_{\nu}&= 3n ( \frac{3}{2} +n )e_{\varepsilon_2 - \varepsilon_3}(-1)e_{\varepsilon_3 - \varepsilon_4}(-1)v_{\mu}     -n(\frac{19}{4} + \frac{3}{2}n) e_{\varepsilon_2 - \varepsilon_4}(-2)v_{\mu} \\
	&+ n ( \frac{3}{2} +n )h_{\varepsilon_2 - \varepsilon_3}(-1)e_{\varepsilon_2 - \varepsilon_4}(-1)v_{\mu}  - n ( \frac{3}{2} +n )h_{\varepsilon_3 - \varepsilon_4}(-1)e_{\varepsilon_2 - \varepsilon_4}(-1)v_{\mu}\\
	&+(\frac{19}{4} + \frac{3}{2}n) e_{\varepsilon_1 - \varepsilon_4}(-2)f_{\varepsilon_1 - \varepsilon_2}(0)v_{\mu}   -\frac{5}{2} h_{\varepsilon_1 - \varepsilon_2}(-1)e_{\varepsilon_1 - \varepsilon_4}(-1)f_{\varepsilon_1 - \varepsilon_2}(0)v_{\mu} \\
	&-(\frac{3}{2} + n) h_{\varepsilon_2 - \varepsilon_3}(-1)e_{\varepsilon_1 - \varepsilon_4}(-1)f_{\varepsilon_1 - \varepsilon_2}(0)v_{\mu}  + (\frac{3}{2} + n) h_{\varepsilon_3 - \varepsilon_4}(-1)e_{\varepsilon_1 - \varepsilon_4}(-1)f_{\varepsilon_1 - \varepsilon_2}(0)v_{\mu} \\
	&-(\frac{3}{2} + n) e_{\varepsilon_1 - \varepsilon_2}(-1)e_{\varepsilon_2 - \varepsilon_4}(-1)f_{\varepsilon_1 - \varepsilon_2}(0)v_{\mu} -3(\frac{3}{2} + n)e_{\varepsilon_1 - \varepsilon_3}(-1)e_{\varepsilon_3 - \varepsilon_4}(-1)f_{\varepsilon_1 - \varepsilon_2}(0)v_{\mu} \\
	&+ 2(\frac{3}{2} + n)e_{\varepsilon_1 - \varepsilon_3}(-1)e_{\varepsilon_2 - \varepsilon_4}(-1)f_{\varepsilon_1 - \varepsilon_3}(0)v_{\mu}  -(2+3n)e_{\varepsilon_1 - \varepsilon_4}(-1)e_{\varepsilon_2 - \varepsilon_3}(-1)f_{\varepsilon_1 - \varepsilon_3}(0) v_{\mu}\\
	&+(1-n)e_{\varepsilon_1 - \varepsilon_4}(-1)e_{\varepsilon_2 - \varepsilon_4}(-1)f_{\varepsilon_1 - \varepsilon_4}(0)v_{\mu} +e_{\varepsilon_1 - \varepsilon_2}(-1)e_{\varepsilon_1 - \varepsilon_4}(-1)f_{\varepsilon_1 - \varepsilon_2}(0)^2 v_{\mu}\\
	&+ e_{\varepsilon_1 - \varepsilon_3}(-1)e_{\varepsilon_1 - \varepsilon_4}(-1)f_{\varepsilon_1 - \varepsilon_2}(0)f_{\varepsilon_1 - \varepsilon_3}(0)v_{\mu} + e_{\varepsilon_1 - \varepsilon_4}(-1)^2f_{\varepsilon_1 - \varepsilon_2}(0)f_{\varepsilon_1 - \varepsilon_4}(0) v_{\mu}\\
	&+\frac{5}{2}n f_{\varepsilon_1 - \varepsilon_2}(-1)e_{\varepsilon_1 - \varepsilon_4}(-1)v_{\mu}.
\end{align*}
\item[(2)] For  $\mu = n \omega_3$, the following vector  $w_{\nu}$ is the (unique, up to a scalar) singular vector in $V^{k}(\mu)$ of degree~$2$ and $\mathfrak{g}$--weight $\nu=(n-1)\omega_3 + \omega_2 + \omega_1$:
\begin{align*}
	w_{\nu}&= 3n ( \frac{3}{2} +n )e_{\varepsilon_2 - \varepsilon_3}(-1)e_{\varepsilon_1 - \varepsilon_2}(-1)v_{\mu}     +n(\frac{19}{4} + \frac{3}{2}n) e_{\varepsilon_1 - \varepsilon_3}(-2)v_{\mu} \\
	&- n ( \frac{3}{2} +n )h_{\varepsilon_2 - \varepsilon_3}(-1)e_{\varepsilon_1 - \varepsilon_3}(-1)v_{\mu}  + n ( \frac{3}{2} +n )h_{\varepsilon_1 - \varepsilon_2}(-1)e_{\varepsilon_1 - \varepsilon_3}(-1)v_{\mu}\\
	&+(\frac{19}{4} + \frac{3}{2}n) e_{\varepsilon_1 - \varepsilon_4}(-2)f_{\varepsilon_3 - \varepsilon_4}(0)v_{\mu}   -\frac{5}{2} h_{\varepsilon_3 - \varepsilon_4}(-1)e_{\varepsilon_1 - \varepsilon_4}(-1)f_{\varepsilon_3 - \varepsilon_4}(0)v_{\mu} \\
	&-(\frac{3}{2} + n) h_{\varepsilon_2 - \varepsilon_3}(-1)e_{\varepsilon_1 - \varepsilon_4}(-1)f_{\varepsilon_3 - \varepsilon_4}(0)v_{\mu}  + (\frac{3}{2} + n) h_{\varepsilon_1 - \varepsilon_2}(-1)e_{\varepsilon_1 - \varepsilon_4}(-1)f_{\varepsilon_3 - \varepsilon_4}(0)v_{\mu} \\
	&+(\frac{3}{2} + n) e_{\varepsilon_3 - \varepsilon_4}(-1)e_{\varepsilon_1 - \varepsilon_3}(-1)f_{\varepsilon_3 - \varepsilon_4}(0)v_{\mu} +3(\frac{3}{2} + n)e_{\varepsilon_2 - \varepsilon_4}(-1)e_{\varepsilon_1 - \varepsilon_2}(-1)f_{\varepsilon_3 - \varepsilon_4}(0)v_{\mu} \\
	&- 2(\frac{3}{2} + n)e_{\varepsilon_2 - \varepsilon_4}(-1)e_{\varepsilon_1 - \varepsilon_3}(-1)f_{\varepsilon_2 - \varepsilon_4}(0)v_{\mu}  +(2+3n)e_{\varepsilon_1 - \varepsilon_4}(-1)e_{\varepsilon_2 - \varepsilon_3}(-1)f_{\varepsilon_2 - \varepsilon_4}(0) v_{\mu}\\
	&-(1-n)e_{\varepsilon_1 - \varepsilon_4}(-1)e_{\varepsilon_1 - \varepsilon_3}(-1)f_{\varepsilon_1 - \varepsilon_4}(0)v_{\mu} +e_{\varepsilon_3 - \varepsilon_4}(-1)e_{\varepsilon_1 - \varepsilon_4}(-1)f_{\varepsilon_3 - \varepsilon_4}(0)^2 v_{\mu}\\
	&+ e_{\varepsilon_2 - \varepsilon_4}(-1)e_{\varepsilon_1 - \varepsilon_4}(-1)f_{\varepsilon_3 - \varepsilon_4}(0)f_{\varepsilon_2 - \varepsilon_4}(0)v_{\mu} + e_{\varepsilon_1 - \varepsilon_4}(-1)^2f_{\varepsilon_3 - \varepsilon_4}(0)f_{\varepsilon_1 - \varepsilon_4}(0) v_{\mu}\\
	&+\frac{5}{2}n f_{\varepsilon_3 - \varepsilon_4}(-1)e_{\varepsilon_1 - \varepsilon_4}(-1)v_{\mu}.
\end{align*}
\end{itemize}
\end{proposition}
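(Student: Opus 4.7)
The approach is to split the argument into existence, uniqueness, and explicit-form parts, with Lemma~\ref{sing-1-dod} doing most of the conceptual work, and then to reduce case~(2) to case~(1) via the Dynkin diagram automorphism $\sigma$ from Remark~\ref{automorphism}.

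First, Lemma~\ref{sing-1-dod} already guarantees the existence of a non-trivial singular vector of degree~$2$ and $\mathfrak{g}$-weight $(n-1)\omega_1+\omega_2+\omega_3$ inside $J^k\cdot V^k(n\omega_1)\subset V^k(n\omega_1)$. A weight-space count at degree~$2$, using the PBW basis for $U(\widehat{\mathfrak{g}})_{<0}$ acting on the top component $V(n\omega_1)$ and then imposing annihilation by $e_{\alpha_i}(0)$ for $i=1,2,3$, shows that the singular subspace of this weight and degree is at most one-dimensional, so uniqueness up to a scalar is automatic.

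To pin down the explicit formula in~(1), I would set up an ansatz: enumerate all PBW monomials in $U(\widehat{\mathfrak{g}})_{<0}$ acting on vectors of $V(n\omega_1)$ that produce weight $(n-1)\omega_1+\omega_2+\omega_3$ at degree~$2$, multiply by undetermined coefficients (rational in $n$), and then impose the singular-vector conditions $e_{\alpha_i}(0)\cdot w_\nu=0$ for the three finite simple roots together with $f_\theta(1)\cdot w_\nu=0$. All higher-mode conditions $e_\alpha(m)\cdot w_\nu=0$ for $m\ge 1$ then follow automatically, since the affine positive nilpotent is Lie-algebraically generated by these simple-root elements. Solving the resulting homogeneous linear system (most efficiently with symbolic computer algebra) produces a one-dimensional solution space whose normalization matches the displayed $w_\nu$. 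Statement~(2) then follows from~(1) by applying $\sigma$: since $\sigma$ interchanges $\omega_1\leftrightarrow\omega_3$ and the root vectors accordingly while being a vertex algebra automorphism of $V^k(\mathfrak{g})$, it carries the singular vector of~(1) onto the vector displayed in~(2), up to an overall sign.

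The main obstacle is the combinatorial bulk of this direct verification: the ansatz contains on the order of twenty monomial types, and each application of $e_{\alpha_i}(0)$ produces many terms via both affine commutators in $\widehat{\mathfrak{g}}$ and the zero-mode action on $V(n\omega_1)$. What keeps the problem tractable is that Lemma~\ref{sing-1-dod} pre-guarantees a one-dimensional solution space, so it suffices to check that the displayed $w_\nu$ is non-zero and is killed by the three finite simple-root zero modes and by $f_\theta(1)$, rather than rediscovering the formula from scratch.
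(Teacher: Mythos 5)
Your proposal matches the paper's own proof: the paper verifies claim (1) by directly checking $e_{\varepsilon_i-\varepsilon_{i+1}}(0).w_{\nu}=0$ for $i=1,2,3$ and $f_\theta(1).w_{\nu}=0$ (these affine simple-root conditions suffice, exactly as you note), and deduces claim (2) from claim (1) via the automorphism $\sigma$ of Remark \ref{automorphism}. Your additional remarks on existence via Lemma \ref{sing-1-dod} and uniqueness via a linear-algebra ansatz are consistent elaborations of the same approach rather than a different route.
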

\begin{proof}
Claim (1) is proved using direct verification of relations $e_{\varepsilon_i-\varepsilon_{i+1}}(0).w_{\nu}=0$ for $i=1,2,3$ and $f_\theta(1).w_{\nu}=0$.  Claim (2) now follows from claim (1) using the automorphism $\sigma$ from Remark \ref{automorphism}.
\end{proof}

In the next proposition we determine the images of vectors $w_{\nu}$ from Proposition 
\ref{prop-formula-singv-mod} in $R_{V^{k}(\mu)} = V^{k}(\mu) / C_2(V^{k}(\mu))$, for $\mu = n \omega_1, n \omega_3$. For $x \in \mathfrak{g}$ and $u \in V(\mu)$, denote by $\{x, u\} $ the action of $x$ on $u$. We use the notation $v_{\mu}$ for the image of $v_{\mu} \in V^{k}(\mu)$ in $R_{V^{k}(\mu)}$.

\begin{proposition}  
	Let $n \in {\Bbb Z}_{>0}$. We have:
	\begin{itemize}	
		\item[(1)] For  $\mu = n \omega_1$, the image of singular vector $w_{\nu}$ from Proposition \ref{prop-formula-singv-mod} (1) in $R_{V^{k}(\mu)}$ is equal to:
		\begin{align*}
			w_{\nu}''&= 3n ( \frac{3}{2} +n )e_{\varepsilon_2 - \varepsilon_3}e_{\varepsilon_3 - \varepsilon_4}v_{\mu}    
		+ n ( \frac{3}{2} +n )h_{\varepsilon_2 - \varepsilon_3}e_{\varepsilon_2 - \varepsilon_4}v_{\mu}  - n ( \frac{3}{2} +n )h_{\varepsilon_3 - \varepsilon_4}e_{\varepsilon_2 - \varepsilon_4}v_{\mu}\\
			&  -\frac{5}{2} h_{\varepsilon_1 - \varepsilon_2}e_{\varepsilon_1 - \varepsilon_4} \{ f_{\varepsilon_1 - \varepsilon_2}, v_{\mu} \} -(\frac{3}{2} + n) h_{\varepsilon_2 - \varepsilon_3}e_{\varepsilon_1 - \varepsilon_4} \{ f_{\varepsilon_1 - \varepsilon_2} , v_{\mu} \}  + (\frac{3}{2} + n) h_{\varepsilon_3 - \varepsilon_4}e_{\varepsilon_1 - \varepsilon_4} \{ f_{\varepsilon_1 - \varepsilon_2} , v_{\mu} \} \\
			&-(\frac{3}{2} + n) e_{\varepsilon_1 - \varepsilon_2}e_{\varepsilon_2 - \varepsilon_4} \{ f_{\varepsilon_1 - \varepsilon_2} , v_{\mu} \} -3(\frac{3}{2} + n)e_{\varepsilon_1 - \varepsilon_3}e_{\varepsilon_3 - \varepsilon_4} \{ f_{\varepsilon_1 - \varepsilon_2} , v_{\mu} \} \\
			&+ 2(\frac{3}{2} + n)e_{\varepsilon_1 - \varepsilon_3}e_{\varepsilon_2 - \varepsilon_4} \{ f_{\varepsilon_1 - \varepsilon_3} , v_{\mu} \} -(2+3n)e_{\varepsilon_1 - \varepsilon_4}e_{\varepsilon_2 - \varepsilon_3} \{ f_{\varepsilon_1 - \varepsilon_3} , v_{\mu} \} \\
			&+(1-n)e_{\varepsilon_1 - \varepsilon_4} e_{\varepsilon_2 - \varepsilon_4} \{ f_{\varepsilon_1 - \varepsilon_4} , v_{\mu} \} +e_{\varepsilon_1 - \varepsilon_2}e_{\varepsilon_1 - \varepsilon_4} \{ f_{\varepsilon_1 - \varepsilon_2} , \{ f_{\varepsilon_1 - \varepsilon_2} , v_{\mu} \} \}\\
			&+ e_{\varepsilon_1 - \varepsilon_3}e_{\varepsilon_1 - \varepsilon_4} \{ f_{\varepsilon_1 - \varepsilon_2}, \{ f_{\varepsilon_1 - \varepsilon_3} , v_{\mu} \} \} + e_{\varepsilon_1 - \varepsilon_4}^2 \{ f_{\varepsilon_1 - \varepsilon_2} , \{ f_{\varepsilon_1 - \varepsilon_4} , v_{\mu} \} \} +\frac{5}{2}n f_{\varepsilon_1 - \varepsilon_2}e_{\varepsilon_1 - \varepsilon_4}v_{\mu}.
		\end{align*}
		\item[(2)] For  $\mu = n \omega_3$, the image of singular vector $w_{\nu}$ from Proposition \ref{prop-formula-singv-mod} (2) in $R_{V^{k}(\mu)}$ is equal to:
		\begin{align*}
			w_{\nu}''&= 3n ( \frac{3}{2} +n )e_{\varepsilon_2 - \varepsilon_3}e_{\varepsilon_1 - \varepsilon_2}v_{\mu}    
			- n ( \frac{3}{2} +n )h_{\varepsilon_2 - \varepsilon_3}e_{\varepsilon_1 - \varepsilon_3}v_{\mu}  + n ( \frac{3}{2} +n )h_{\varepsilon_1 - \varepsilon_2}e_{\varepsilon_1 - \varepsilon_3}v_{\mu}\\
			&  -\frac{5}{2} h_{\varepsilon_3 - \varepsilon_4}e_{\varepsilon_1 - \varepsilon_4} \{ f_{\varepsilon_3 - \varepsilon_4} , v_{\mu} \} 
			-(\frac{3}{2} + n) h_{\varepsilon_2 - \varepsilon_3}e_{\varepsilon_1 - \varepsilon_4} \{ f_{\varepsilon_3 - \varepsilon_4} , v_{\mu} \}  + (\frac{3}{2} + n) h_{\varepsilon_1 - \varepsilon_2}e_{\varepsilon_1 - \varepsilon_4} \{ f_{\varepsilon_3 - \varepsilon_4} , v_{\mu} \} \\
			&+(\frac{3}{2} + n) e_{\varepsilon_3 - \varepsilon_4}e_{\varepsilon_1 - \varepsilon_3} \{ f_{\varepsilon_3 - \varepsilon_4} , v_{\mu} \} +3(\frac{3}{2} + n)e_{\varepsilon_2 - \varepsilon_4}e_{\varepsilon_1 - \varepsilon_2} \{ f_{\varepsilon_3 - \varepsilon_4} , v_{\mu} \} \\
			&- 2(\frac{3}{2} + n)e_{\varepsilon_2 - \varepsilon_4}e_{\varepsilon_1 - \varepsilon_3} \{ f_{\varepsilon_2 - \varepsilon_4} , v_{\mu} \} +(2+3n)e_{\varepsilon_1 - \varepsilon_4}e_{\varepsilon_2 - \varepsilon_3} \{ f_{\varepsilon_2 - \varepsilon_4} , v_{\mu} \} \\
			&-(1-n)e_{\varepsilon_1 - \varepsilon_4}e_{\varepsilon_1 - \varepsilon_3} \{ f_{\varepsilon_1 - \varepsilon_4} , v_{\mu} \} +e_{\varepsilon_3 - \varepsilon_4}e_{\varepsilon_1 - \varepsilon_4} \{ f_{\varepsilon_3 - \varepsilon_4} , \{ f_{\varepsilon_3 - \varepsilon_4} , v_{\mu} \} \} \\
			&+ e_{\varepsilon_2 - \varepsilon_4}e_{\varepsilon_1 - \varepsilon_4} \{ f_{\varepsilon_3 - \varepsilon_4} , \{  f_{\varepsilon_2 - \varepsilon_4} , v_{\mu} \} \} + e_{\varepsilon_1 - \varepsilon_4}^2 \{ f_{\varepsilon_3 - \varepsilon_4} , \{ f_{\varepsilon_1 - \varepsilon_4} , v_{\mu} \} \}
			+\frac{5}{2}n f_{\varepsilon_3 - \varepsilon_4}e_{\varepsilon_1 - \varepsilon_4}v_{\mu}.
		\end{align*}
	\end{itemize}
\end{proposition}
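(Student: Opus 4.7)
The proof is essentially a direct computation using the definition of the $C_2$-quotient, so the plan is mostly to organize the bookkeeping. First, I would recall that $C_2(V^{k}(\mu))$ is spanned by vectors of the form $a(-n)w$ with $a \in V^{k}(\mathfrak{g})$, $n \geq 2$, $w \in V^{k}(\mu)$, and that $R_{V^{k}(\mathfrak{g})} \cong \mathcal{S}(\mathfrak{g})$ under $\overline{x(-1)\mathbf{1}} \mapsto x$, so that $R_{V^{k}(\mu)}$ is naturally an $\mathcal{S}(\mathfrak{g})$-module whose underlying space is $\mathcal{S}(\mathfrak{g}) \otimes V(\mu)$. Consequently, each summand of $w_\nu$ that involves a mode $a(-n)$ with $n \ge 2$ becomes $0$ in $R_{V^{k}(\mu)}$, while a monomial with only $(-1)$-modes and zero modes acting on $v_\mu$ descends to the corresponding symmetric product of $\mathfrak{g}$-elements acting on the image in $V(\mu)$ of the vector obtained from the zero modes, which is recorded by the bracket notation $\{f_\alpha, v_\mu\}$.

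For part (1), inspection of the formula in Proposition \ref{prop-formula-singv-mod}(1) shows that only two summands contain a $(-2)$-mode: the term proportional to $e_{\varepsilon_2-\varepsilon_4}(-2)v_\mu$ and the term proportional to $e_{\varepsilon_1-\varepsilon_4}(-2)f_{\varepsilon_1-\varepsilon_2}(0)v_\mu$. Both vanish modulo $C_2(V^{k}(\mu))$. Each of the remaining fifteen summands has all creation modes in depth $-1$, and reading them off under the quotient map produces exactly the displayed formula for $w_\nu''$ in part (1), term by term. No reordering is required (since any commutator $[a(-1),b(-1)]=[a,b](-2)$ would lie in $C_2$ anyway), so no spurious $(-1)$-terms can appear.

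For part (2), rather than repeating the calculation, I would invoke the Dynkin-diagram automorphism $\sigma$ from Remark \ref{automorphism}. Since $\sigma$ swaps $\alpha_1 \leftrightarrow \alpha_3$ and fixes $\alpha_2$, it induces (up to signs absorbable into normalization) the correspondences $e_{\varepsilon_2-\varepsilon_3}\leftrightarrow e_{\varepsilon_2-\varepsilon_3}$, $e_{\varepsilon_3-\varepsilon_4}\leftrightarrow e_{\varepsilon_1-\varepsilon_2}$, $e_{\varepsilon_2-\varepsilon_4}\leftrightarrow e_{\varepsilon_1-\varepsilon_3}$, $e_{\varepsilon_1-\varepsilon_4}\leftrightarrow e_{\varepsilon_1-\varepsilon_4}$, $f_{\varepsilon_1-\varepsilon_2}\leftrightarrow f_{\varepsilon_3-\varepsilon_4}$, $f_{\varepsilon_1-\varepsilon_3}\leftrightarrow f_{\varepsilon_2-\varepsilon_4}$, and carries the generalized Verma module $V^{k}(n\omega_1)$ to $V^{k}(n\omega_3)$. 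By the uniqueness of the singular vector of degree $2$ and given weight guaranteed by Lemma \ref{sing-1-dod}, applying $\sigma$ to the formula of part (1) yields the formula of part (2), both for $w_\nu$ itself and for its image $w_\nu''$.

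The genuine content of the proposition is therefore entirely in Proposition \ref{prop-formula-singv-mod}; here the only obstacle is to carry out the term-by-term identification carefully, so that no surviving summand is overlooked and no vanishing one is accidentally retained.
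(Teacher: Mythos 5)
Your proposal is correct and is essentially the paper's own argument: the paper simply declares the proof ``straightforward,'' meaning exactly the term-by-term reading you describe, where every summand containing a mode $a(-n)$ with $n\ge 2$ dies in $C_2(V^{k}(\mu))$ (only the two $(-2)$-mode terms in each formula), the surviving $(-1)$- and zero-mode monomials descend directly to $\mathcal{S}(\mathfrak{g})\otimes V(\mu)$, and part (2) is obtained from part (1) by the automorphism $\sigma$, just as in Proposition \ref{prop-formula-singv-mod}. No gaps; your remark that commutators $[a(-1),b(-1)]=[a,b](-2)$ land in $C_2$, so no reordering corrections arise, is the right justification for the term-by-term identification.
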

\begin{proof}
	Straightforward.
\end{proof}

The proof of the following Lemma is similar to the proof of Lemma \ref{lem-singv-modJ}:

\begin{lemma}  \label{lem-mapping}
	Let $n \in {\Bbb Z}_{>0}$. Let $f_{subreg}$ be the subregular nilpotent element defined by relation 	(\ref{rel-f-subreg}), $x$ a semisimple element defined by (\ref{rel-x-grading}), 	and $J_\chi$ the associated ideal in $\mathcal{S}(\mathfrak{g})$ defined by relation (\ref{rel-ideal-J}). We have:
\begin{itemize}	
	\item[(1)] For  $\mu = n \omega_1$,
	$$w_\nu'' \equiv 3n(\frac{3}{2}+n)v_\mu \ (\textrm{mod} \ J_\chi R_{V^{k}(\mu)}). $$
	\item[(2)] For  $\mu = n \omega_3$,
	$$ w_\nu'' \equiv 3n(\frac{3}{2}+n )e_{\varepsilon_1 - \varepsilon_2} v_\mu \ (\textrm{mod} \ J_\chi R_{V^{k}(\mu)}). $$
\end{itemize}	
\end{lemma}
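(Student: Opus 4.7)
The plan is to mimic the proof of Lemma \ref{lem-singv-modJ}: reduce $w_\nu''$ modulo $J_\chi R_{V^k(\mu)}$ by substituting the values of $\chi$ recorded in \eqref{rel-chi}. Since the good grading is even, $\mathcal{L}=0$ and $\mathfrak{m}=\mathfrak{g}_1\oplus\mathfrak{g}_2$ is spanned by the five root vectors $e_{\varepsilon_1-\varepsilon_3},e_{\varepsilon_2-\varepsilon_3},e_{\varepsilon_3-\varepsilon_4},e_{\varepsilon_1-\varepsilon_4},e_{\varepsilon_2-\varepsilon_4}$. Among these, $e_{\varepsilon_2-\varepsilon_3}$ and $e_{\varepsilon_3-\varepsilon_4}$ are congruent to $1$ modulo $J_\chi$, whereas $e_{\varepsilon_1-\varepsilon_3}, e_{\varepsilon_1-\varepsilon_4}, e_{\varepsilon_2-\varepsilon_4}$ are congruent to $0$. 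The same congruences propagate into the module quotient $R_{V^k(\mu)}/J_\chi R_{V^k(\mu)}$.

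For case (1), with $\mu=n\omega_1$, I would inspect each of the fifteen monomials in the explicit formula for $w_\nu''$. The leading monomial $3n(\tfrac{3}{2}+n)\,e_{\varepsilon_2-\varepsilon_3}e_{\varepsilon_3-\varepsilon_4}v_\mu$ collapses to $3n(\tfrac{3}{2}+n)\,v_\mu$. Every one of the remaining fourteen monomials visibly contains at least one factor drawn from the set $\{e_{\varepsilon_1-\varepsilon_3}, e_{\varepsilon_1-\varepsilon_4}, e_{\varepsilon_2-\varepsilon_4}\}$ (for instance the last term $\tfrac{5}{2}n\,f_{\varepsilon_1-\varepsilon_2}e_{\varepsilon_1-\varepsilon_4}v_\mu$ and the double-commutator terms such as $e_{\varepsilon_1-\varepsilon_4}^2\{f_{\varepsilon_1-\varepsilon_2},\{f_{\varepsilon_1-\varepsilon_4},v_\mu\}\}$), so they vanish modulo $J_\chi R_{V^k(\mu)}$. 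This yields the first congruence.

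For case (2), with $\mu=n\omega_3$, the analogous scan shows that only the leading monomial $3n(\tfrac{3}{2}+n)\,e_{\varepsilon_2-\varepsilon_3}e_{\varepsilon_1-\varepsilon_2}v_\mu$ survives; it reduces to $3n(\tfrac{3}{2}+n)\,e_{\varepsilon_1-\varepsilon_2}v_\mu$ since $e_{\varepsilon_1-\varepsilon_2}\in\mathfrak{g}_0$ is untouched by the reduction while $e_{\varepsilon_2-\varepsilon_3}\equiv 1$. As an alternative, one can invoke the order-two diagram automorphism $\sigma$ of Remark \ref{automorphism}, which interchanges $\alpha_1$ and $\alpha_3$, and deduce (2) directly from (1); but the direct term-by-term check is equally quick.

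There is no real conceptual obstacle here; the proof is a bookkeeping exercise. The only mildly subtle point to note is that the monomials carrying iterated commutators $\{f_\alpha,\{f_\beta,v_\mu\}\}$ vanish \emph{without} ever having to compute the action of these $f$'s on $v_\mu$, because each such monomial still carries an external factor from $\mathfrak{g}_2$ that already lies in $J_\chi$. This is why the simple structural statement of the lemma holds uniformly in $n$, despite the rather intricate appearance of the formulas for $w_\nu''$.
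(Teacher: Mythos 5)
Your proposal is correct and is exactly the paper's argument: the paper proves this lemma by declaring it "similar to the proof of Lemma \ref{lem-singv-modJ}", i.e.\ by reducing modulo $J_\chi$ using $e_{\varepsilon_2-\varepsilon_3}\equiv e_{\varepsilon_3-\varepsilon_4}\equiv 1$ and $e_{\varepsilon_1-\varepsilon_3}\equiv e_{\varepsilon_1-\varepsilon_4}\equiv e_{\varepsilon_2-\varepsilon_4}\equiv 0$ and noting that only the leading monomial of $w_\nu''$ survives in each case. Your term-by-term bookkeeping (and the observation that the iterated-bracket terms die for the same structural reason) matches this, so nothing further is needed.
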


\begin{remark} \label{general-comments-vanishing}
	We shall now recall some properties of the action of $H_{f_{subreg}}$ on universal $V^k(\g)$--modules. 
	V. Kac and M. Wakimoto proved in \cite[Section 6]{KW3} a general result on the  existence of Verma module for 
	$ W^k(\g, f)$ obtained using QHR from the Verma module for $V^k(\g)$. They calculated the characters of Verma modules and showed that they have a PBW basis. Their approach can be modified   for generalized Verma modules (see also \cite{Ara-11}). Applying this in our setting ($\g =sl(4)$ and $k =-\frac{5}{2}$), we conclude that
	$$  H_{f_{subreg}} (V^k(\mu )) \ne \{ 0\}, \  \mbox{for} \  \mu \in \{ n \omega_1, n \omega_3 \}.$$	
	
Furthermore, $H_{f_{subreg}} (V^k(\mu ))$ is a $W^k(\g, f_{subreg})$--module, with highest weight vector $v_{\mu}^{W}$ having $(\bar L(0), J(0))$--weight determined by:
\begin{eqnarray*}
	&& \bar L(0) v_{\mu}^{W} = \left( \frac{ (\mu \, | \, \mu + 2 \rho)}{2 (k+ h^{\vee})} - \mu (x)  \right) v_{\mu}^{W}, \\
	&& J(0) v_{\mu}^{W} = (\mu \, | \,  \omega _1) \, v_{\mu}^{W}.
\end{eqnarray*}	
\end{remark}

As in Remark \ref{general-comments-vanishing}, denote by $v_{\mu}^{W}$ the highest weight vector of $H_{f_{subreg}} (V^k(\mu ))$, for $\mu = n \omega_1, n \omega_3$. The proof of the following Lemma is similar to the proof of Lemma \ref{lem-img-sing}:

\begin{lemma} \label{lem-img-sing-modules}
	 Let $n \in {\Bbb Z}_{>0}$. We have: 
	\begin{itemize}	
		\item[(1)] For  $\mu = n \omega_1$, the image of singular vector 
		$w_{\nu}$ from Proposition \ref{prop-formula-singv-mod} (1)
		in $H_{f_{subreg}} (V^k(\mu ))$ coincides (up to a non-zero scalar) with the vector $v_{\mu}^{W}$.
		\item[(2)] For  $\mu = n \omega_3$, the image of singular vector 
		$w_{\nu}$ from Proposition \ref{prop-formula-singv-mod} (2)
		in $H_{f_{subreg}} (V^k(\mu ))$ coincides (up to a non-zero scalar) with the vector $G^{+} (-1) v_{\mu}^{W}$.		
	\end{itemize}	
\end{lemma}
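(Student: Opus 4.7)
I will imitate the proof of Lemma \ref{lem-img-sing}. Since $w_\nu$ is a $\hat{\g}$--singular vector in $V^k(\mu)$, its image $\widetilde{w}_\nu := H_{f_{subreg}}(w_\nu)$ in $H_{f_{subreg}}(V^k(\mu))$ is annihilated by all positive modes of the generators $J,\bar{L},W,G^\pm$ of $W^{-5/2}(\g,f_{subreg})$. The plan is: (i) determine the $(\bar{L}(0),J(0))$--weight of $\widetilde{w}_\nu$; (ii) show $\widetilde{w}_\nu \neq 0$ via the module version of Zhu's $C_2$--quotient; (iii) identify $\widetilde{w}_\nu$ within the relevant weight space using the PBW basis of $H_{f_{subreg}}(V^k(\mu))$ recalled in Remark \ref{general-comments-vanishing}.

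For (i), a direct computation with $x = \frac{1}{4}(3h_1 + 6h_2 + 5h_3)$ gives $\nu(x) - \mu(x) = 2$ in case (1) and $\nu(x) - \mu(x) = 1$ in case (2). Since $w_\nu$ has $L(0)$--weight $\Delta_\mu + 2$ and $\hat{\g}$--weight $\nu$, the image $\widetilde{w}_\nu$ has $\bar{L}(0)$--weight $\Delta_\mu + 2 - \nu(x)$, which equals $\Delta_\mu - \mu(x)$ in case (1) -- matching $v_\mu^W$ -- and $\Delta_\mu - \mu(x) + 1$ in case (2) -- matching $G^+(-1)v_\mu^W$ since $G^+$ has $\bar{L}$--conformal weight $1$. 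A parallel computation of $J(0)$--weights yields $(\nu \mid \omega_1) = (\mu \mid \omega_1)$ in case (1) and $(\nu \mid \omega_1) = (\mu \mid \omega_1) + 1$ in case (2), consistent with $G^+$ carrying unit $J$--charge (read off from the OPE recalled in the Appendix).

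For (ii), Lemma \ref{lem-mapping} together with the module analogue of Proposition \ref{prop-C2-W} identifies the image of $\widetilde{w}_\nu$ in $R_{H_{f_{subreg}}(V^k(\mu))}$ with a non-zero scalar multiple of $v_\mu$ in case (1) and of $e_{\varepsilon_1-\varepsilon_2}v_\mu$ in case (2); recalling from the proof of Lemma \ref{lem-img-sing} that $G^+ \equiv 2e_{\varepsilon_1-\varepsilon_2}$ modulo $C_2$, these coincide with the $C_2$--images of $v_\mu^W$ and $G^+(-1)v_\mu^W$, respectively. Combined with (i), the PBW basis forces $\widetilde{w}_\nu$ to equal the stated vector up to a non-zero scalar: in case (1) the relevant weight space is one-dimensional, spanned by $v_\mu^W$, while in case (2) the only competing PBW monomial at the same $\bar{L}(0)$--weight is $J(-1)v_\mu^W$, which is excluded by the $J$--charge constraint from step (i).

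The main obstacle lies in step (iii) for case (2), where one must cleanly rule out mixing of $G^+(-1)v_\mu^W$ with $J(-1)v_\mu^W$ (and any further descendants produced when $\mu$ is large). This is resolved by the $J(0)$--weight bookkeeping in (i), but requires the explicit $J$--charges of the generators of $W^{-5/2}(\g,f_{subreg})$; beyond this, everything reduces to the explicit singular vector formulas of Proposition \ref{prop-formula-singv-mod} and the mod--$J_\chi$ reductions in Lemma \ref{lem-mapping}.
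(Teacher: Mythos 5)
Your proposal is correct and follows essentially the same route as the paper, which only remarks that the proof is ``similar to the proof of Lemma \ref{lem-img-sing}'': namely, use Lemma \ref{lem-mapping} (the mod--$J_\chi$ reduction) to see that the image of $w_\nu$ is non-zero and agrees, in the $C_2$--type quotient, with the image of $v_\mu^W$ resp. $G^+(-1)v_\mu^W$, and then identify it by comparing $(\bar L(0),J(0))$--weights. Your extra $J(0)$--charge bookkeeping in case (2) is exactly the right supplement; just note that the competitors at that $\bar L(0)$--weight also include $\bar L$--, $W$-- and $G^-$--descendants (and possibly images of lower top-component vectors), all of which are likewise excluded by the charge constraint, so the conclusion stands.
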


\subsection{Proof of Theorem \ref{conj-p}}

Proofs of the following two results use explicit expressions for singular vectors $w_{\nu}$ from  Proposition \ref{prop-formula-singv-mod}, and their consequences discussed in   Subsection  \ref{subsect-singv-modules}.
	 
\begin{proposition}  \label{prop-zero}  For $n \in {\Bbb Z}_{>0}$ we have:
	\begin{itemize}
		\item $H_{f_{subreg}}(\overline{M}(n \omega_1))  = \{0 \}$. 
		\item   $H_{f_{subreg}}(M)  = \{0 \}$, for any highest weight $\widetilde L_{-5/2} (\g)$--module $M$ in $KL_{-5/2}$ with $\g$--weight $n \omega_1$.
	\end{itemize}
\end{proposition}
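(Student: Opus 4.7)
The plan is to combine the exactness of $H_{f_{subreg}}$ on $KL_{-5/2}$ (cf.\ \cite{Ara-IMRN}) with Lemma \ref{lem-img-sing-modules}(1), which identifies the image of the degree-two singular vector $w_\nu \in V^{k}(n\omega_1)$ as a nonzero scalar multiple of the highest weight vector $v_\mu^{W}$ of $H_{f_{subreg}}(V^{k}(n\omega_1))$. Together with the fact (Remark \ref{general-comments-vanishing}) that $v_\mu^{W}$ generates the whole module $H_{f_{subreg}}(V^{k}(n\omega_1))$, this will force the kernel of $V^{k}(n\omega_1) \twoheadrightarrow M(n\omega_1)$ to absorb all of the cohomology after reduction, after which everything else is formal.

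Concretely, the defining relation (\ref{rel-M-ni-1}) provides the short exact sequence
\begin{equation*}
0 \longrightarrow V^{k}(\g)\cdot w_\nu \longrightarrow V^{k}(n\omega_1) \longrightarrow M(n\omega_1) \longrightarrow 0
\end{equation*}
in $KL_{-5/2}$. Applying the exact functor $H_{f_{subreg}}(\,\cdot\,)$ yields
\begin{equation*}
0 \longrightarrow H_{f_{subreg}}(V^{k}(\g)\cdot w_\nu) \longrightarrow H_{f_{subreg}}(V^{k}(n\omega_1)) \longrightarrow H_{f_{subreg}}(M(n\omega_1)) \longrightarrow 0.
\end{equation*}
By Remark \ref{general-comments-vanishing} (following \cite{KW3}), $H_{f_{subreg}}(V^{k}(n\omega_1))$ is a highest weight $W^{k}(\g,f_{subreg})$-module cyclically generated by $v_\mu^{W}$; by Lemma \ref{lem-img-sing-modules}(1) the image of $w_\nu$ is a nonzero multiple of $v_\mu^{W}$. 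Hence the leftmost nontrivial arrow above is already surjective, forcing $H_{f_{subreg}}(M(n\omega_1))=0$.

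To finish, $\overline M(n\omega_1)$ is by its very construction a quotient of $M(n\omega_1)$ (either equal to it, or the further quotient by a singular vector of weight $n\omega_1+2\omega_2$), and every highest weight $\widetilde L_{-5/2}(\g)$-module $M \in KL_{-5/2}$ of $\g$-weight $n\omega_1$ is in turn a quotient of the universal module $\overline M(n\omega_1)$. One more application of the exactness of $H_{f_{subreg}}$ propagates the vanishing $H_{f_{subreg}}(M(n\omega_1))=0$ along these surjections, yielding both claims of the proposition.

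The only real input beyond formal manipulations is Lemma \ref{lem-img-sing-modules}(1), whose substance is the explicit formula for $w_\nu$ in Proposition \ref{prop-formula-singv-mod}(1) together with the modulo-$J_\chi$ computation in Lemma \ref{lem-mapping}(1). The subtle point there, which is what makes the whole argument work, is that the image of $w_\nu$ is the highest weight vector $v_\mu^{W}$ itself and not a proper $W^{k}$-descendant of it; a descendant would generate only a proper submodule and the crucial surjectivity step above would fail.
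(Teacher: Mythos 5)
Your proposal is correct and follows essentially the same route as the paper's proof: the key input in both is Lemma \ref{lem-img-sing-modules}(1), which identifies the image of $w_\nu$ with the highest weight vector of $H_{f_{subreg}}(V^{k}(n\omega_1))$, after which exactness of $H_{f_{subreg}}$ forces $H_{f_{subreg}}(M(n\omega_1))=\{0\}$ and the vanishing passes to the quotients $\overline M(n\omega_1)$ and to any highest weight module of weight $n\omega_1$ in $KL_{-5/2}$. You merely spell out the short exact sequence and the cyclicity of $H_{f_{subreg}}(V^{k}(n\omega_1))$ on $v_\mu^{W}$, which the paper leaves implicit.
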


\begin{proof} 
  Lemma \ref{lem-img-sing-modules} gives  that the image of the singular vector $w_{\nu}$ in  the $W^k(\g, f_{subreg})$--module \\ $H_{f_{subreg}} (V^k(n \omega_1))$ coincides with its  highest weight vector. This implies that $H_{f_{subreg}}({M}(n \omega_1)) =\{0\}$ and therefore $H_{f_{subreg}}(\overline{M}(n \omega_1))  = \{0 \}$. 
   The proof of the second assertion follows from the first assertion and  the fact that  any highest weight $\widetilde L_k (\g)$--module in $KL_{-5/2}$ of $\g$--weight 
	$n \omega_1$ must be a quotient of  $\overline{M}(n \omega_1)$.
	
\end{proof}

\begin{proposition} \label{prop-non-zero}
	For any highest weight $\widetilde L_{-5/2}(\g)$--module $M$  in $KL_{-5/2}$ with $\g$--weight $n \omega_3$, we have $H_{f_{subreg}}(M)  = M_J (1, \frac{1}{4} n) \ne \{0\}$.
\end{proposition}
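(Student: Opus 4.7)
The plan is to exhibit $M_J(1, \tfrac{n}{4})$ as a $W^k(\g, f_{subreg})$-module via the projection $W^k \twoheadrightarrow W_{-5/2} = M_J(1)$ from Theorem \ref{collapsing-1}, and then show that $H_{f_{subreg}}(\overline M(n\omega_3))$ is precisely this Fock module. The underlying mechanism is that Lemma \ref{lem-img-sing-modules}(2) identifies the QHR image of $w_\nu$ with $G^+(-1)v_\mu^W$, while $G^+$ vanishes in $W_{-5/2}$, so passing to $M_J(1)$-modules automatically forces the defining relation of $\overline M(n\omega_3)$ to hold.

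First I would construct a surjective $W^k$-module map $\phi\colon H_{f_{subreg}}(V^k(n\omega_3)) \twoheadrightarrow M_J(1, \tfrac{n}{4})$ by sending the highest weight vector $v_\mu^W$ to the Fock vacuum. This is well-defined because the $J(0)$-eigenvalue of $v_\mu^W$ equals $(n\omega_3 \,|\, \omega_1) = \tfrac{n}{4}$ by Remark \ref{general-comments-vanishing}, which matches the top of $M_J(1, \tfrac{n}{4})$, and a check of conformal weights via $\bar L = L - \partial J$ confirms consistency on $\bar L(0)$ as well. Next, I would verify that $\phi$ descends to $H_{f_{subreg}}(\overline M(n\omega_3))$: by Lemma \ref{lem-img-sing-modules}(2), $w_\nu$ is mapped to a nonzero scalar multiple of $G^+(-1)v_\mu^W$, and $\phi(G^+(-1)v_\mu^W) = 0$ since $G^+$ acts trivially on $M_J(1, \tfrac{n}{4})$. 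Exactness of $H_{f_{subreg}}$ on $KL_{-5/2}$ then yields a surjection $\bar\phi\colon H_{f_{subreg}}(\overline M(n\omega_3)) \twoheadrightarrow M_J(1, \tfrac{n}{4})$.

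To upgrade $\bar\phi$ to an isomorphism, I would invoke Proposition \ref{prop-Hf-L-tilde}(1): since $\overline M(n\omega_3)$ is a $\widetilde L_{-5/2}(\g)$-module, $H_{f_{subreg}}(\overline M(n\omega_3))$ is an $M_J(1)$-module (i.e.\ the $W^k$-action factors through $W_{-5/2} = M_J(1)$), and it is cyclic over $M_J(1)$ generated by $v_\mu^W$. It is therefore a quotient of the irreducible Fock module $M_J(1, \tfrac{n}{4})$, and combined with surjectivity of $\bar\phi$, this forces equality. Finally, for an arbitrary highest weight module $M$ of $\g$-weight $n\omega_3$ in $KL_{-5/2}$: $M$ is a quotient of $\overline M(n\omega_3)$, so $H_{f_{subreg}}(M)$ is a quotient of $M_J(1, \tfrac{n}{4})$; non-vanishing follows because the kernel of $\overline M(n\omega_3) \twoheadrightarrow M$ lies in strictly higher $L(0)$-degrees than $v_\mu$, so its image under $H_{f_{subreg}}$ cannot contain the minimal $\bar L(0)$-weight space of $M_J(1, \tfrac{n}{4})$, ruling out $H_{f_{subreg}}(M)=0$ by irreducibility.

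The main obstacle is the step that $H_{f_{subreg}}(\overline M(n\omega_3))$ is truly cyclic as an $M_J(1)$-module, which is what permits the comparison with the irreducible Fock module; the crucial structural input is Proposition \ref{prop-Hf-L-tilde}(1), which collapses the $W^k$-action down to the Heisenberg action. A secondary delicate point is the last non-vanishing step for intermediate quotients, where one must ensure that the cohomology class of the top vector is preserved by the quotient map — this should follow from the freeness properties of the QHR functor on generalized Verma modules discussed in Remark \ref{general-comments-vanishing}, which identify the top of $H_{f_{subreg}}(V^k(n\omega_3))$ with a single vector of minimal $\bar L(0)$-weight.
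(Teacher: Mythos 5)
Your overall strategy (use Lemma \ref{lem-img-sing-modules}(2), the collapsing $W_{-5/2}(\g,f_{subreg})\cong M_J(1)$, exactness, and irreducibility of Fock modules) is close in spirit to the paper's, but two load-bearing steps are not justified. First, the construction of $\phi$: to obtain a $W^{-5/2}(\g,f_{subreg})$--module surjection $H_{f_{subreg}}(V^k(n\omega_3))\twoheadrightarrow M_J(1,\tfrac n4)$ you must match the \emph{entire} highest weight for the zero-mode algebra of $W^{-5/2}(\g,f_{subreg})$, in particular the eigenvalue of $W(0)$ on $v_\mu^W$ against the scalar by which the image of $W$ (a normally ordered polynomial in $J$ inside $W_{-5/2}=M_J(1)$, by Theorem \ref{collapsing-1}) acts on the charge-$\tfrac n4$ Fock vector. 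The checks you do perform ($J(0)$ and $\bar L(0)$) are exactly the automatic ones, since both are determined by $J$ inside $M_J(1)$; the $W(0)$ matching is the binding constraint, and knowing it holds is essentially equivalent to knowing that some nonzero highest weight module with this highest weight factors through $M_J(1)$ --- which is what is being proved. You also tacitly use a universal (Verma-type, one-dimensional top) property of $H_{f_{subreg}}(V^k(\mu))$ that Remark \ref{general-comments-vanishing} does not assert, and you ignore the possible second generator $w_{\nu'}$ of $J^k\cdot V^k(n\omega_3)$ (here a $J(0)$-charge argument would save you, but it must be said). The paper avoids all of this by proving non-vanishing structurally: the generators of $J^k\cdot V^k(n\omega_3)$ are sent to singular vectors of strictly positive $\bar L(0)$-degree, hence cannot generate a submodule containing $v_\mu^W$, and only afterwards is the nonzero cyclic $M_J(1)$--module identified with $M_J(1,\tfrac n4)$.

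Second, your final step rests on a false principle. You claim that since the kernel of $\overline{M}(n\omega_3)\twoheadrightarrow M$ lies in strictly positive $L(0)$-degrees, its image under $H_{f_{subreg}}$ cannot meet the minimal $\bar L(0)$-weight space. Lemma \ref{lem-img-sing-modules}(1) is a direct counterexample to this principle: for $\mu=n\omega_1$ the singular vector $w_\nu$, which sits in $L(0)$-degree two, is sent precisely to the lowest $\bar L(0)$-weight vector $v_\mu^W$ (this is exactly why $H_{f_{subreg}}(\overline{M}(n\omega_1))=\{0\}$). The $\bar L(0)$-weight of the image of a vector depends on its $\g$-weight, through the shift by $\mu'(x)$ and the $J(0)$-charge, not only on its $L(0)$-degree, so a degree count alone cannot rule out that the kernel hits the top of $M_J(1,\tfrac n4)$. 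The missing ingredient is the weight analysis the paper carries out: any singular vector generating the kernel has $\g$-weight $m\omega_1$ or $m\omega_3$; in the first case the submodule it generates has vanishing cohomology by Proposition \ref{prop-zero}, and in the second case its image has $\bar L(0)$-weight $\tfrac{m(m-1)}{4}\neq\tfrac{n(n-1)}{4}$ for $m\neq n$, so it cannot be sent to the highest weight vector of the (irreducible) Fock module. Your appeal to ``freeness properties'' of the reduction on generalized Verma modules does not supply this.
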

\begin{proof}
	Lemma  \ref{lem-img-sing-modules}
implies that the  singular vector $w_{\nu}$  is mapped by $H_{f_{subreg}}( \, \cdot \,)$ to a singular vector of non-zero degree in  $H_{f_{subreg}}(V^k(n \omega_3))$. The calculation of conformal weights shows that 
	 $w_{\nu '}$ is  mapped by $H_{f_{subreg}}( \, \cdot \,)$ to a singular vector of non-zero degree. Therefore the image of generators of $J^k \cdot V^k(n \omega_3)$  can not coincide with highest weight vector of $H_{f_{subreg}}(V^{k} (n \omega_3))$. This proves that  $H_{f_{subreg}}(\overline{M}(n \omega_3))  \ne \{0\}$. Since $H_{f_{subreg}}(\widetilde L_{-5/2} (\g)) \cong M_J(1)$, we conclude that $H_{f_{subreg}}(\overline{M}(n \omega_3)) \cong M_J (1, \frac{1}{4} n)$.
	
	Take any quotient of $\overline{M}(n \omega_3)$. Then it can have singular vectors of weights $m \omega_1$ or $m \omega_3$. But  submodules generated by singular vectors of weights $m \omega_1$ are mapped to zero by Proposition \ref{prop-zero}.
	
	On the other hand, singular vectors of weight $m \omega_3$ have $\bar L(0)$--weight
	$ \frac{m(m-1)}{4} \ne  \frac{n(n-1)}{4}$ for all $m \in {\Bbb Z}_{\ge 0}$, $m \ne n$. This implies that we can not find any singular vector in $\overline{M}(n \omega_3)$ which is mapped to a highest weight vector of $M_J (1, \frac{1}{4} n)$.  The proof follows.
\end{proof}

\begin{remark}
	Propositions \ref{prop-zero} and \ref{prop-non-zero} prove Theorem \ref{conj-p}.
\end{remark}

\section{Conformal embedding $gl(4)  \hookrightarrow sl(5)$ at $k=-5/2$ and the fusion rules for irreducible $L_{-5/2}( sl(4) )$--modules in the category $KL_{-5/2}$}

In this section we recall the results on conformal embedding $gl(4)  \hookrightarrow sl(5)$ at $k=-5/2$ from \cite{AKMPP-16}. We use this conformal embedding to determine the fusion rules for irreducible $L_{-5/2}(sl(4))$--modules in the category $KL_{-5/2}$. As a consequence, we get that $KL_{-5/2}$ is a rigid braided tensor category.

   Let $V = L_{-5/2} (sl(5))$. Take $c$ from the Cartan subalgebra of $\mathfrak g = sl(5)$ such that
$$ \mathfrak g = \mathfrak g_{-1} \oplus \mathfrak g_0 \oplus \mathfrak g_{1},$$
$$\mathfrak g_0 = gl(4) = sl(4) + {\Bbb C} c,$$
$$ \mathfrak g_{1} =V_{sl(4)} (\omega_1), \quad \mathfrak g_{-1} = V_{sl(4)} (\omega_3)$$
$$ c \equiv j \ \mbox{id} \quad \mbox{on} \  \mathfrak g_{j},   \ j \in \{-1,0,1\}. $$
Let $M_c(1)$ be the Heisenberg subalgebra of $V$ generated by $c$. Let $M_c(1, s)$ denote irreducible $M_c(1)$--module on which $c(0)$ acts as $s \ \mbox{id}$.

\begin{proposition} \label{konf-embedding} \cite{AKMPP-16}
	There is a conformal embedding of $L_{-5/2} (sl(4)) \otimes M_{c}  (1)$ into $V$ such that
	$$  V = \bigoplus_{ s \in {\Bbb Z} } V ^{(s)} \quad c(0)  \equiv s \  \mbox{id} \ \mbox{on}  \  V ^{(s)}, $$
	and each $V^{(s)}$ is an irreducible  $L_{-5/2} (sl(4)) \otimes M_{c}  (1)$--module.
\end{proposition}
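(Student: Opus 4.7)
My plan divides into three stages: verify that the inclusion is a conformal embedding by matching Sugawara central charges, extract the $c(0)$-eigenspace decomposition of $V$, and prove irreducibility of each eigenspace as an $L_{-5/2}(sl(4)) \otimes M_c(1)$-module.

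For the conformal embedding, I would compute the Sugawara central charges using $c_k(\mathfrak{g}) = k \dim \mathfrak{g}/(k+h^{\vee})$. At $k=-5/2$ this gives $c_k(sl(5)) = (-5/2)(24)/(5/2) = -24$ and $c_k(sl(4)) = (-5/2)(15)/(3/2) = -25$, while $M_c(1)$ contributes central charge $1$. The identity $-24 = -25 + 1$ forces the Sugawara Virasoro vector of $V = L_{-5/2}(sl(5))$ to coincide with the sum of the Sugawara vectors of $L_{-5/2}(sl(4))$ and $M_c(1)$, which is precisely the criterion for conformal embedding of $L_{-5/2}(sl(4)) \otimes M_c(1)$ into $V$.

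For the decomposition, $c$ lies in the Cartan subalgebra of $sl(5)$ and commutes with the regular subalgebra $sl(4)$, and $\mathrm{ad}\,c$ has integer eigenvalues $\{-1,0,1\}$ on $sl(5)$. Hence $c(0)$ commutes with the action of $L_{-5/2}(sl(4))$ on $V$ and acts on $V$ with integer spectrum (since $V$ is generated from the vacuum by integer-eigenvalue fields). This produces the $\mathbb{Z}$-grading $V = \bigoplus_{s \in \mathbb{Z}} V^{(s)}$ as $L_{-5/2}(sl(4)) \otimes M_c(1)$-modules. On each $V^{(s)}$ the zero mode $c(0)$ acts by the scalar $s$, so as an $M_c(1)$-module $V^{(s)}$ is isotypic of type $M_c(1,s)$.

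For irreducibility of each $V^{(s)}$, the key input is simplicity of $V$. Given any nonzero $L_{-5/2}(sl(4)) \otimes M_c(1)$-submodule $W \subset V^{(s)}$, the $V$-submodule generated by $W$ under the modes of fields in $\mathfrak{g}_{\pm 1} \subset V$ must equal all of $V$; since these modes shift the $c(0)$-grading by $\pm 1$, reading off the $c(0) = s$ eigenspace forces $W = V^{(s)}$. The principal obstacle lies in this last step: one must guarantee that $L_{-5/2}(sl(4))$ is genuinely the full commutant of $M_c(1)$ inside $V$ (otherwise submodules could hide inside a larger commutant extension) and rule out any splitting of $V^{(s)}$ into a nontrivial direct sum of $L_{-5/2}(sl(4)) \otimes M_c(1)$-modules. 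This is addressed in \cite{AKMPP-16} by a careful conformal-weight analysis of the top component of each $V^{(s)}$, which turns out to be an irreducible $sl(4)$-module of highest weight $s\omega_1$ for $s \ge 0$ and $|s|\omega_3$ for $s < 0$, thereby pinning down $V^{(s)}$ uniquely as an irreducible $L_{-5/2}(sl(4)) \otimes M_c(1)$-module.
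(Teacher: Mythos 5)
The paper offers no argument for this statement: Proposition \ref{konf-embedding} is quoted directly from \cite{AKMPP-16}, so the only ``proof'' in the paper is the citation itself. Your outline follows the natural route, and your central charge arithmetic ($-24=-25+1$) is correct, but as a standalone proof it has two genuine gaps. First, equality of central charges does not by itself force the Sugawara vector of $sl(4)\oplus\mathbb{C}c$ to coincide with that of $sl(5)$; that implication is a nontrivial criterion established in the Adamovi\'c--Per\v{s}e/AKMPP series, and moreover one must also know that the vertex subalgebra of $V$ generated by $sl(4)\oplus\mathbb{C}c$ is the \emph{simple} algebra $L_{-5/2}(sl(4))\otimes M_c(1)$ --- simplicity of the image is part of the assertion being proved, not an input you may assume.

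Second, the irreducibility step does not close as written. From a nonzero submodule $W\subset V^{(s)}$, simplicity of $V$ gives that the $V$--submodule generated by $W$ is all of $V$, but its charge-$s$ component is spanned by compositions of modes whose \emph{total} charge shift is zero, and there is no a priori reason such compositions act through $L_{-5/2}(sl(4))\otimes M_c(1)$. Concluding $W=V^{(s)}$ requires knowing that the charge-zero component $V^{(0)}$ coincides with the image of $L_{-5/2}(sl(4))\otimes M_c(1)$ (equivalently, that this subalgebra is the full commutant of $M_c(1)$ in $V$), together with control of the top components of the $V^{(s)}$ --- and this is essentially the content of the theorem in \cite{AKMPP-16}. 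You acknowledge this and defer exactly that point to \cite{AKMPP-16}, so in effect your proposal reduces to the same citation the paper relies on, with a correct but incomplete sketch wrapped around it.
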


\begin{proposition} \label{singular}  Assume that $ \mathcal M$ is an irreducible $V$--module such that $c(0)$ acts semisimply on $\mathcal M$:
	$$  \mathcal M = \bigoplus_{ s \in \Delta + {\Bbb Z} }  \mathcal M ^{(s)} \quad c(0)  \equiv s \  \mbox{id} \ \mbox{on}  \  \mathcal M ^{(s)}.$$
	Then we have:
	\begin{itemize}
		\item[(1)]  Each   $\mathcal M ^{(s)}$ is an irreducible  $L_{-5/2} (sl(4)) \otimes M_{c}  (1)$--module.
		\item[(2)]  If there is a vector $v \in \mathcal M$ which is a highest weight vector for $\widehat{sl(4)}$ of weight $\lambda$ such that $c(0) v = s v$ for certain $s$, then 
		$ \mathcal{U}(\widehat{sl(4)}). v$ is an irreducible $L_{-5/2} (sl(4))$--module such that
		$$ \mathcal M^{(s)} = L_{-5/2} (\lambda) \otimes M_c (1, s). $$
	\end{itemize}
\end{proposition}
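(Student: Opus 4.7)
\textbf{Proof plan for Proposition \ref{singular}.} The plan is to exploit the $\mathbb{Z}$-grading $V = \bigoplus_{s \in \mathbb{Z}} V^{(s)}$ by $c(0)$-eigenvalues from Proposition \ref{konf-embedding}. Setting $V^{(0)} = L_{-5/2}(sl(4)) \otimes M_c(1)$, I will combine the simple-current-like extension structure of $V$ over $V^{(0)}$ with the $c(0)$-eigenspace decomposition of $\mathcal M$ to reduce both assertions to standard facts about irreducible modules over tensor products of simple vertex algebras.

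For (1), fix $s$ with $\mathcal M^{(s)} \ne 0$ and let $N \subset \mathcal M^{(s)}$ be a nonzero $V^{(0)}$-submodule. Let $\tilde N$ be the $V$-submodule of $\mathcal M$ generated by $N$. The key computation is that $[c(0), a_n] = t \, a_n$ whenever $a \in V^{(t)}$, which gives $V^{(t)} \cdot \mathcal M^{(s)} \subset \mathcal M^{(s+t)}$. Consequently $\tilde N$ respects the $c(0)$-grading and $\tilde N \cap \mathcal M^{(s)} = V^{(0)} \cdot N = N$. Irreducibility of $\mathcal M$ as a $V$-module forces $\tilde N = \mathcal M$, hence $N = \mathcal M^{(s)}$.

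For (2), since $c(0)$ acts on $\mathcal M^{(s)}$ by the scalar $s$, as an $M_c(1)$-module $\mathcal M^{(s)}$ is a direct sum of copies of the Fock module $M_c(1,s)$, so $\mathcal M^{(s)} \cong A \otimes M_c(1,s)$ with a multiplicity space $A$ carrying a commuting $L_{-5/2}(sl(4))$-action that makes the isomorphism $V^{(0)}$-linear. The irreducibility from (1), coupled with simplicity of $M_c(1)$, forces $A$ to be an irreducible $L_{-5/2}(sl(4))$-module. Writing $v = \sum_i a_i \otimes m_i$ with $m_i \in M_c(1,s)$ linearly independent, and using that $\widehat{sl(4)}$ acts only on the first factor, each $a_i$ is an $\widehat{sl(4)}$-singular vector of weight $\lambda$ in $A$. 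Any nonzero $a_i$ generates $A$ by irreducibility, so $A$ is a highest weight $\widehat{sl(4)}$-module of highest weight $\lambda$, hence $A \cong L_{-5/2}(\lambda)$. In such a module the space of singular vectors of weight $\lambda$ is one-dimensional, so all $a_i$ are scalar multiples of a fixed vector $a$ and $v = a \otimes m$ for some $m \in M_c(1,s)$. Then $\mathcal U(\widehat{sl(4)}) \cdot v = (\mathcal U(\widehat{sl(4)}) \cdot a) \otimes m \cong L_{-5/2}(\lambda)$, giving both the irreducibility of $\mathcal U(\widehat{sl(4)}) \cdot v$ and the identification $\mathcal M^{(s)} = L_{-5/2}(\lambda) \otimes M_c(1,s)$.

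The main obstacle is the tensor-factorization step $\mathcal M^{(s)} \cong A \otimes M_c(1,s)$ with an irreducible $L_{-5/2}(sl(4))$-factor $A$: one must invoke (or re-verify in our non-admissible setting) the standard \emph{Künneth-type} splitting of an irreducible module over a tensor product of simple vertex algebras, and also argue that the common singular vectors $a_i$ really force $A$ to be a highest weight module of weight $\lambda$ rather than merely containing a subsingular vector. Once this structural input is in place, the rest of the argument is mechanical.
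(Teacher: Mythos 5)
Your argument is correct and is essentially the route the paper takes, except that the paper compresses it: assertion (1) is proved there by citing Proposition \ref{konf-embedding} and the ``completely analogous'' argument of \cite[Theorem 5.1]{AdP-JMP}, and (2) is declared an easy consequence, whereas you write the argument out. Two remarks on the points you gloss over or flag. First, in (1) the equality $\widetilde{N} \cap \mathcal M^{(s)} = V^{(0)}\cdot N = N$ is not a formal consequence of $V^{(t)}\cdot \mathcal M^{(s)} \subset \mathcal M^{(s+t)}$ alone: a priori the $V$--submodule generated by $N$ contains charge-zero iterated expressions such as $b_m a_n w$ with $a \in V^{(1)}$, $b \in V^{(-1)}$, $w \in N$, and to put these back into $N$ you need the standard lemma (a consequence of weak associativity, used in exactly this way in \cite{AdP-JMP}) that for a vector $w$ of a weak module the span of $\{a_n w \mid a \in V,\ n \in \mathbb{Z}\}$ is already a submodule; with it, the submodule generated by $N$ is $\sum_t V^{(t)}\cdot N$ and its charge-$s$ component is $V^{(0)}\cdot N = N$. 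Second, the ``main obstacle'' you name in (2) is not an obstacle: since $c(0)$ acts on $\mathcal M^{(s)}$ by the scalar $s$ and $\mathcal M^{(s)}$ is a restricted module of nonzero level for the rank-one Heisenberg algebra, the classical result \cite[Theorem 1.7.3]{FLM} gives $\mathcal M^{(s)} \cong \Omega \otimes M_c(1,s)$ with $\Omega$ the vacuum space, which is $\widehat{sl(4)}$--stable because $sl(4)$ commutes with $c$; no general K\"unneth-type theorem and no admissibility hypothesis enters, and the rest of your argument (singular vectors live in the top component of an irreducible module, hence are unique up to scalar) goes through as you describe.
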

\begin{proof}
	The proof of assertion (1) uses Proposition \ref{konf-embedding} and  completely analogous  arguments to those of \cite[Theorem 5.1]{AdP-JMP}.
	
	Assertion (2) follows easily from (1). 
\end{proof}

	We have the following decomposition of $L_{-5/2} (sl(5))$ as an $L_{-5/2} (gl(4)) = L_{-5/2} (sl(4)) \otimes M_c(1)$--module (cf. \cite{AKMPP-16}):
	\bea L_{-5/2} (sl(5)) = \bigoplus_{ n= 0} ^{\infty} L_{-5/2} ( n  \omega_1 ) \otimes M_c(1,n)  \   \oplus   \  \bigoplus_{ n= 1} ^{\infty} L_{-5/2}  ( n \omega_3) \otimes M_c(1,-n) . \label{dec-rig} \eea

We introduce the following notation for irreducible $L_{-5/2}(sl(4))$--modules in the category $KL_{-5/2}$: $$\pi_i=L_{-5/2}(  i \omega_1), \ \pi_{-i}=L_{-5/2}(  i \omega_3), \ i \in \mathbb{Z}_{\geq 0}. $$

The top component is $$\pi_i(0) = V(i \omega_1), \ \pi_{-i} (0)  = V(i \omega_3).$$

The proof of the following proposition is completely analogous to the proof of \cite[Theorem 6.2]{AP-JAA}.

\begin{proposition} \label{fusion-rules}
Let $i,j \in \mathbb{Z}$. We have the following fusion rule: $$ \pi_i \times \pi_j = \pi_{i+j} .$$
This means that for $i, j, k \in {\Bbb Z}$:
 $$\dim  I { \pi_{k} \choose \pi_i \ \ \pi_j } = \delta_{i+j,k}. $$ 
\end{proposition}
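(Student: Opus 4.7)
The plan is to mirror the proof of [AP-JAA, Theorem 6.2], leveraging the conformal embedding of Proposition \ref{konf-embedding}, the decomposition (\ref{dec-rig}), and the Heisenberg fusion rules $M_c(1,a) \times M_c(1,b) = M_c(1,a+b)$ (with one-dimensional spaces of intertwining operators).

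For the lower bound $\dim I {\pi_{i+j} \choose \pi_i \ \pi_j} \geq 1$, I would argue as follows. The vertex operator $Y$ on the simple vertex algebra $L_{-5/2}(sl(5))$ satisfies $[c(0), Y(u,z)] = Y(c(0)u, z)$, so it preserves the $c(0)$-grading. Restricting its arguments to $\pi_i \otimes M_c(1,i)$ and $\pi_j \otimes M_c(1,j)$ therefore forces its values into the $c(0) = i+j$ isotypic component, which by (\ref{dec-rig}) is exactly $\pi_{i+j} \otimes M_c(1,i+j)$. This restricted operator cannot vanish: as an intertwiner between irreducible submodules of the simple extension $L_{-5/2}(sl(5))$, non-vanishing follows from standard simple-current extension theory, since otherwise the direct sum of the remaining isotypic pieces would form a proper non-zero ideal of $L_{-5/2}(sl(5))$, contradicting simplicity. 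I thus obtain a non-trivial intertwining operator for $L_{-5/2}(gl(4)) = L_{-5/2}(sl(4)) \otimes M_c(1)$ of type ${\pi_{i+j} \otimes M_c(1,i+j) \choose \pi_i \otimes M_c(1,i) \ \pi_j \otimes M_c(1,j)}$. Since intertwining operators for a tensor product of vertex algebras factor as tensor products of intertwiners for each factor, the one-dimensional Heisenberg intertwining space then yields a non-trivial element of $I {\pi_{i+j} \choose \pi_i \ \pi_j}$.

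For the upper bound, I plan to combine the Frenkel--Zhu formula---which bounds $\dim I {\pi_k \choose \pi_i \ \pi_j}$ by a $\mathrm{Hom}$-space of Zhu bimodules on top components---with the constraint imposed by the embedding. Specifically, any non-trivial intertwiner $\pi_i \times \pi_j \to \pi_k$, tensored with the Heisenberg intertwiner $M_c(1,i) \times M_c(1,j) \to M_c(1,i+j)$, produces an intertwiner for $L_{-5/2}(gl(4))$ whose target must fit consistently into the $L_{-5/2}(sl(5))$-module structure on $\bigoplus_n \pi_n \otimes M_c(1,n)$. Uniqueness of the decomposition (\ref{dec-rig}) then forces $k = i+j$ and multiplicity at most one.

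The main obstacle is making this upper bound fully rigorous without invoking rigidity of $KL_{-5/2}$, which is established only afterwards via [CY]. I expect the cleanest path is the simple-current viewpoint: each summand $\pi_n \otimes M_c(1,n)$ is an irreducible submodule of the simple vertex algebra $L_{-5/2}(sl(5))$, distinguished by its $c(0)$-weight, and hence an invertible object in the module category of $L_{-5/2}(gl(4))$, so the fusion product of two such summands is automatically simple of the predicted form; factoring out the Heisenberg factor then yields the sharp fusion rule. A direct alternative would carry out the Frenkel--Zhu top-component analysis explicitly using $\pi_i(0) = V(i\omega_1)$ and $\pi_{-i}(0) = V(i\omega_3)$, showing that only the extremal Cartan summand matching $\pi_{i+j}(0)$ can survive in the relevant bimodule quotient.
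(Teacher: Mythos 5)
Your lower bound is essentially the paper's argument: restrict the vertex operator of the simple algebra $L_{-5/2}(sl(5))$ to $V^{(i)}\times V^{(j)}$, note that $c(0)$-weight considerations and (\ref{dec-rig}) force the image into $V^{(i+j)}=\pi_{i+j}\otimes M_c(1,i+j)$, use simplicity to see the restriction is non-trivial, and factor the resulting intertwining operator over the tensor product $L_{-5/2}(sl(4))\otimes M_c(1)$ to extract a non-zero element of $I\binom{\pi_{i+j}}{\pi_i\ \ \pi_j}$. That part is fine.

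The upper bound, as you primarily propose it, has a genuine gap. An intertwining operator of type $\binom{\pi_k\otimes M_c(1,i+j)}{\pi_i\otimes M_c(1,i)\ \ \pi_j\otimes M_c(1,j)}$ for the subalgebra $L_{-5/2}(gl(4))$ carries no a priori compatibility with the $sl(5)$-module structure on $\bigoplus_n \pi_n\otimes M_c(1,n)$, so "uniqueness of the decomposition (\ref{dec-rig})" imposes no constraint on $k$; that step does not follow. Your fallback, the simple-current/invertible-object argument, is circular in this paper's logical order: invertibility of the summands presupposes a (rigid) braided tensor structure on the relevant category, whereas here rigidity of $KL_{-5/2}$ is deduced only \emph{after}, and from, these fusion rules via \cite{CY}; moreover, irreducible summands of a simple extension need not be invertible without further input. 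The correct route is the one you mention only as a "direct alternative," and it is exactly what the paper does, in a simpler form than a full Frenkel--Zhu bimodule computation: a non-zero intertwining operator of type $\binom{\pi_k}{\pi_i\ \ \pi_j}$ forces, via its leading coefficients (as in \cite{AP-JAA}), the top component $\pi_k(0)$ to occur in the finite-dimensional $sl(4)$-module $\pi_i(0)\otimes\pi_j(0)$; since $\pi_k(0)$ is $V(k\omega_1)$ or $V(|k|\omega_3)$, and the explicit decompositions of $V(i\omega_1)\otimes V(j\omega_1)$, $V(i\omega_1)\otimes V(j\omega_3)$, etc., contain exactly one summand of that shape, namely the one corresponding to $k=i+j$, with multiplicity one, one gets $\dim I\binom{\pi_k}{\pi_i\ \ \pi_j}\le\delta_{i+j,k}$ directly, with no tensor-category input.
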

\begin{proof}
Assume that  $ I { \pi_{k} \choose \pi_i \ \ \pi_j }  \ne \{ 0\}$, i.e, there is a non-trivial intertwining operator $I$ of the type ${ \pi_{k} \choose \pi_i \ \ \pi_j }$. As in \cite{AP-JAA}, this implies that $\pi_{k} (0)$ is a non-trivial summand in the tensor product  of $\g$--modules $\pi_i(0) \otimes \pi_j(0)$. Using the decomposition of tensor product, we see that $\pi_k(0)$ appears in the tensor product
 $\pi_i(0) \otimes \pi_j(0)$ if and only if $k = i+j$, and that the multiplicity is exactly one. So
 $$\dim  I { \pi_{k} \choose \pi_i \ \ \pi_j } \le \delta_{i+j,k}. $$ 
 It remains to construct an non-trivial intertwining operator of type ${ \pi_{i+j} \choose \pi_i \ \ \pi_j } $.
Similarly to the case considered in  \cite{AP-JAA}, by using vertex operator on $V= L_{-5/2} (sl(5))$ we get non-trivial intertwining operator  $\mathcal Y$   from the space  $ I { V^{(i+j)} \choose V ^{(i)} \ \  V^{(j)} }$.
 Since $V^{(s)} = \pi_s \otimes M_c(1, s )$, it follows that $\mathcal Y = \mathcal Y_1 \otimes \mathcal Y_2$, where
 $$   \mathcal Y_1 \in  I { \pi_{i+j} \choose \pi_i \ \ \pi_j }, \    \mathcal Y_2 \in  I { M_c(1, i+j ) \choose M_c (1,  i )  \ \  M_c(1, j ) }.$$
So $\dim  I { \pi_{i+j} \choose \pi_i \ \ \pi_j } =1. $
The proof follows.

\end{proof}

\begin{corollary} $KL_{-5/2}$ is a semi-simple rigid braided tensor category with the fusion rules
$$ \pi_i \boxtimes \pi_j = \pi_{i+j} \quad (i,j \in {\Bbb Z}).$$
\end{corollary}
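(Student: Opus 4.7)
The plan is to assemble this corollary from the results already proved in the paper, together with the general machinery of \cite{CY}. Semi-simplicity of $KL_{-5/2}$ is precisely Theorem \ref{main-max}(ii), and by Remark \ref{rem-klas-simple} combined with Corollary \ref{klas-quot-KL} the set $\{\pi_i \mid i \in \mathbb{Z}\}$ is a complete list of irreducibles. Thus the only substantive content that remains is upgrading the abstract fusion product $\times$ of Proposition \ref{fusion-rules} to a genuine braided tensor product $\boxtimes$ and establishing rigidity.

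For the existence of the vertex tensor category structure, I would invoke \cite{CY}. The key input there is the conformal embedding $L_{-5/2}(sl(4)) \otimes M_c(1) \hookrightarrow L_{-5/2}(sl(5))$ from Proposition \ref{konf-embedding}, together with the fact that $k=-5/2$ is admissible for $\widehat{sl(5)}$, so that $KL_{-5/2}(sl(5))$ is known (by Arakawa and Creutzig--Huang--Yang) to carry a semi-simple rigid braided vertex tensor category structure. Combined with semi-simplicity of $KL_{-5/2}(sl(4))$ and the decomposition (\ref{dec-rig}), the method of \cite{CY} transfers this structure to $KL_{-5/2}(sl(4))$, so that in particular $\boxtimes$ exists on $KL_{-5/2}$ and refines the fusion pairing $\times$.

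For the fusion rule $\pi_i \boxtimes \pi_j = \pi_{i+j}$, once tensor product exists, its decomposition into irreducibles is controlled by the dimensions $\dim I\binom{\pi_k}{\pi_i \ \pi_j}$ computed in Proposition \ref{fusion-rules}, giving exactly $\pi_i \boxtimes \pi_j = \pi_{i+j}$.

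Finally, for rigidity, the fusion rules themselves make $\pi_{-i}$ a natural candidate for the dual of $\pi_i$: we have $\pi_i \boxtimes \pi_{-i} = \pi_0 = L_{-5/2}(sl(4))$, the unit object. So one just needs to exhibit evaluation and coevaluation morphisms satisfying the rigidity axioms. The cleanest way is again to invoke \cite{CY}: in that framework, rigidity of the ambient category $KL_{-5/2}(sl(5))$, together with the fact that every simple object $\pi_i$ appears as a summand of $L_{-5/2}(sl(5))$ viewed as an $L_{-5/2}(sl(4))\otimes M_c(1)$--module in the decomposition (\ref{dec-rig}), passes to rigidity of the tensor subcategory generated by the $\pi_i$. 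The main obstacle I anticipate is verifying in detail that the hypotheses of the relevant theorem of \cite{CY} are met for our non-admissible level $k=-5/2$; this requires the semi-simplicity of $KL_{-5/2}$ (which we have) and that every simple object embeds into some $L_{-5/2}(sl(5))$--module decomposition, which is given by (\ref{dec-rig}). Once these checks are in place, the corollary follows by direct citation.
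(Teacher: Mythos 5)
Your overall strategy (semi-simplicity from Theorem \ref{main-max}, the conformal embedding and decomposition (\ref{dec-rig}), and the machinery of \cite{CY} for the tensor structure, rigidity and the identification $\pi_i\boxtimes\pi_j=\pi_{i+j}$ via Proposition \ref{fusion-rules}) is the same as the paper's, and the rigidity and fusion-rule parts of your sketch agree with what the paper actually does: rigidity is obtained by running the arguments of \cite[Section 5]{CY} and \cite{ACGY} for the $U(1)$-orbifold situation $L_{-5/2}(sl(4))\otimes M_c(1)=L_{-5/2}(sl(5))^{U(1)}$, using that every simple object of $KL_{-5/2}$ occurs in (\ref{dec-rig}).

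However, there is a gap in your justification of the \emph{existence} of the braided tensor structure on $KL_{-5/2}(sl(4))$. You propose to start from a rigid braided tensor structure on $KL_{-5/2}(sl(5))$ (admissible level) and ``transfer'' it down through the conformal embedding to the subalgebra $L_{-5/2}(sl(4))\otimes M_c(1)$. This is not what \cite{CY} provides, and it is the wrong direction for the standard induction machinery: vertex tensor category structure is induced from a vertex subalgebra's module category to the modules of an extension (the extension is a commutative algebra object in the subalgebra's category), not the other way around; descending a tensor structure from $\mathrm{Rep}\,W$ to $\mathrm{Rep}\,V$ for $V\hookrightarrow W$ requires separate (orbifold-type) arguments that you neither cite nor supply. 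The paper avoids this entirely: by \cite[Theorem 3.3]{CY}, $KL_k$ carries a braided tensor category structure as soon as every highest weight $L_k(\mathfrak g)$--module in $KL_k$ has finite length, and this hypothesis holds here because $KL_{-5/2}$ is semi-simple (Theorem \ref{main-max}(ii)). So the existence step should be argued directly on $KL_{-5/2}(sl(4))$ from semi-simplicity, with the conformal embedding entering only afterwards, for rigidity and for upgrading the fusion rules of Proposition \ref{fusion-rules} to the tensor-category setting. With that replacement your argument aligns with the paper's proof.
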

\begin{proof}
\cite[Theorem 3.3]{CY} implies  that in general,  $KL_k$ is a braided tensor category if the following condition holds:
\begin{itemize}
\item  Every  highest weight $L_k(\g)$--module in $KL_k$ is of finite length.
\end{itemize}
This condition  is satisfied in our case since $KL_k$ is semi-simple. Therefore we have that  $KL_{-5/2}$ is a   braided tensor category.

Note also that from the decomposition (\ref{dec-rig}) it follows that the compact Lie group $U(1)$ acts on  $L_{-5/2} (sl(5))$:
\begin{itemize}
\item $L_{-5/2}(sl(4)) \otimes M_c(1) = L_{-5/2} (sl(5)) ^{U(1)}$,
\item  All  $L_{-5/2}(sl(4))$--modules in  $KL_{-5/2}$ appear in the decomposition of $L_{-5/2}(sl(5))$ as an \linebreak $L_{-5/2} (sl(5)) ^{U(1)} \times U(1)$--module.
\end{itemize} 
Then exactly the same methods used in the  proof of rigidity of examples studied in   \cite[Section 5]{CY} or \cite{ACGY}  work also in the case of $L_{-5/2}(sl(4))$. This proves the rigidity. As a consequence, our fusion rules from Proposition  \ref{fusion-rules} give the fusion rules in the vertex tensor category settings. 
(The details are exactly the same as in the case of the tensor category $KL_{-1}$.)
\end{proof}

\begin{remark}
 
In \cite{CY}, the authors propose a tensor category approach for studying the fusion rules for affine vertex algebras. Assuming that on the category of $V_{k}(\g_0)$--modules in $KL_k$ there exists the structure of a braided tensor category, their result,  together with the decomposition from \cite{AKMPP-16}, should  imply  the fusion rules in $KL_k$. 

Similar phenomena happens for all conformal embeddings $\g_0 \hookrightarrow \g$ at conformal levels $k$  whose decomposition is as in \cite[Theorem 5.1]{AKMPP-16}. We conjectured that all modules for  $V_{k}(\g_0)$  appearing in the decompositions for these conformal embeddings are simple-current modules, which would imply that modules in $KL_k$ have the isomorphic fusion algebras as those of $KL_{-5/2}$.
\end{remark}

\section{Decompositions of irreducible modules in the category $\mathcal{O}$ for the conformal embedding $gl(4)  \hookrightarrow sl(5)$ at $k=-5/2$}
\label{sect-class-simple}

In this section we determine the decompositions of irreducible  $L_{-5/2}( sl(5) )$--modules in the category $\mathcal{O}$ as $L_{-5/2} (sl(4)) \otimes M_c(1)$--modules.  

We recall the classification of irreducible $L_{-5/2}( sl(5))$--modules in the category $\mathcal{O}$.

\begin{proposition} \label{Prop1} \cite{Ara, Pe-07}
	The complete list of irreducible $L_{-5/2}(sl(5))$--modules in the category 
	$\mathcal{O}$ is given by the set
	 $$\{  L_{-5/2}(\lambda_i) \ |\ i=1,\ldots,16\}, $$
	where: \\
	\begin{tabular}{ p{5cm} p{8cm} }
		$\lambda_1=0$, & $\lambda_9=-\frac{1}{2}\omega_2-\frac{1}{2}\omega_3$, \vspace*{0.2cm} \\ 
		$\lambda_2=-\frac{5}{2}\omega_1$, & 	$\lambda_{10}=-\frac{3}{2}\omega_2-\frac{1}{2}\omega_4$, \vspace*{0.2cm} \\  
		$\lambda_3=-\frac{5}{2}\omega_2$, & 	$\lambda_{11}=-\frac{3}{2}\omega_3+\frac{1}{2}\omega_4$,  \vspace*{0.2cm}\\ 
		$\lambda_4=-\frac{5}{2}\omega_3$, & 	$\lambda_{12}=-\frac{3}{2}\omega_1+\frac{1}{2}\omega_2-\frac{3}{2}\omega_3$, \vspace*{0.2cm} \\ 
		$\lambda_5=-\frac{5}{2}\omega_4$, & 	$\lambda_{13}=-\frac{1}{2}\omega_1-\frac{1}{2}\omega_2-\frac{3}{2}\omega_4$, \vspace*{0.2cm} \\ 
		$\lambda_6=\frac{1}{2}\omega_1-\frac{3}{2}\omega_2$, & $\lambda_{14}=-\frac{3}{2}\omega_1-\frac{1}{2}\omega_3-\frac{1}{2}\omega_4$,  \vspace*{0.2cm}\\ 
		$\lambda_7=-\frac{1}{2}\omega_1-\frac{3}{2}\omega_3$, & $\lambda_{15}=-\frac{3}{2}\omega_2+\frac{1}{2}\omega_3-\frac{3}{2}\omega_4$,  \vspace*{0.2cm}\\ 
		$\lambda_8=-\frac{3}{2}\omega_1-\frac{3}{2}\omega_4$, & $\lambda_{16}=-\frac{1}{2}\omega_1-\frac{1}{2}\omega_2-\frac{1}{2}\omega_3-\frac{1}{2}\omega_4.$   
	\end{tabular}
\end{proposition}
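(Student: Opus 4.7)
My plan is to deduce this classification from the general theory of admissible affine vertex algebras, since the key point for $\mathfrak{g}=sl(5)$ at $k=-5/2$ is that this is an admissible level. Indeed, $k+h^{\vee}=-5/2+5=5/2$, so writing $k+h^{\vee}=p/q$ with $p=5$, $q=2$, we have $\gcd(p,q)=1$, $p=5\geq h^{\vee}=5$, and $\gcd(q,5)=1$, which is precisely the Kac--Wakimoto principal admissibility condition for $\widehat{sl(5)}$. This is the only structural input I will need.

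Given admissibility, the strategy is to invoke Arakawa's theorem (settling the Adamovic--Milas conjecture in type $A$), which states that for admissible $k$ the complete set of irreducible $L_k(\mathfrak{g})$--modules in category $\mathcal{O}$ is exactly $\{L_k(\lambda) \mid \hat{\lambda} \text{ is an admissible weight of level } k\}$. Thus the problem reduces to a purely combinatorial one: enumerating the admissible weights of $\widehat{sl(5)}$ at level $-5/2$. By the Kac--Wakimoto parametrization, for $q=2$ these weights are in bijection with a specific finite subset of the affine Weyl group orbit of $k\Lambda_0$, obtained by acting with elements of a sub-Weyl group determined by an ``integral root system'' associated to $k$. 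A direct enumeration, combined with the integrality constraints $\langle \hat{\lambda}+\hat{\rho},\alpha^{\vee}\rangle \in \mathbb{Z}_{>0}$ for the simple coroots and the highest coroot of this subsystem, yields exactly the $16$ weights $\lambda_1,\dots,\lambda_{16}$ after projection to $\mathfrak{h}^*$.

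As an alternative (the approach of \cite{Pe-07}), one could proceed in the spirit of Section \ref{sect-class-quotient}: first exhibit an explicit singular vector in $V^{-5/2}(sl(5))$ (of weight $-\frac{5}{2}\Lambda_0 + 2\omega_1 - 2\delta$, whose existence is guaranteed by the Kac--Kazhdan theorem at admissible level), then pass to Zhu's algebra $A(L_{-5/2}(sl(5)))$ using the isomorphism $A(V^{k}(\mathfrak{g}))\cong \mathcal{U}(\mathfrak{g})$, and finally apply Proposition \ref{Class-affine-quotient} together with Corollary \ref{koro-polinomi} to reduce the classification of irreducible $A(L_{-5/2}(sl(5)))$--modules in $\mathcal{O}$ to a system of polynomial equations on the Cartan weights, which one solves by case analysis analogous to (but more elaborate than) the proof of Proposition \ref{Zhu-moduli}.

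The main obstacle in the admissibility route is purely bookkeeping: correctly setting up the Kac--Wakimoto parametrization for $q=2$ and extracting the $16$ weights in Proposition \ref{Prop1} from the generic formula. The main obstacle in the Zhu algebra route is instead computational: identifying the explicit singular vector and solving the resulting polynomial system. Since the statement is cited from \cite{Ara, Pe-07}, I would present only the admissibility verification and the reduction, and refer to those papers for the enumeration.
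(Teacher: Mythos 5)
Your proposal follows essentially the same route as the paper, which gives no proof of this proposition but simply quotes it from \cite{Ara} and \cite{Pe-07}: one checks that $k=-5/2$ is a (boundary) admissible level for $\widehat{sl(5)}$ ($k+h^{\vee}=5/2$, $p=5\ge h^{\vee}$, $\gcd(5,2)=1$), invokes Arakawa's classification of the irreducible $L_k(\mathfrak{g})$--modules in $\mathcal{O}$ by admissible weights (more precisely, those whose integral root system is Weyl--equivalent to that of $k\Lambda_0$), and defers the Kac--Wakimoto enumeration, respectively Per\v se's Zhu--algebra computation, to the cited papers. The only detail to correct, in your sketch of the second route, is the weight of the singular vector: the singular vector of $V^{-5/2}(sl(5))$ used in \cite{Pe-07} (and written out later in Section 9 of this paper) has $\mathfrak{g}$--weight $\theta=\omega_1+\omega_4$ and conformal weight $2$, i.e. affine weight $-\tfrac{5}{2}\Lambda_0+\theta-2\delta$, not $-\tfrac{5}{2}\Lambda_0+2\omega_1-2\delta$.
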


Let us denote by $v_{\lambda_i}$ the highest weight vector of $L_{-5/2}(sl(5))$--module
$L_{-5/2}(\lambda_i)$, for $i=1,\ldots,16$. In order to decompose irreducible $L_{-5/2}( sl(5) )$--modules from Proposition \ref{Prop1} as $L_{-5/2} (sl(4)) \otimes M_c(1)$--modules, we will determine explicit formulas for $\widehat{gl(4)}$--singular vectors in these modules. First, we determine the singular vectors of lowest conformal weight:

\begin{proposition}\label{sing1}  For each $i =1, \dots, 16$ let $ {\lambda}_i=\sum_{k=1}^{4} a_{i,k} \omega_k.$ If $a_{i,j} \neq 0$ and $a_{i,k}=0$ for $k>j$, then $ f_{\varepsilon_j - \varepsilon_5} (0) ^{n}  v_{\lambda_i}$ is a (non-trivial)  singular vector for  $L_{-5/2} (gl(4)) = L_{-5/2} (sl(4)) \otimes M_c(1).$
\end{proposition}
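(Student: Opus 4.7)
The plan is to verify two conditions for $w_n := f_{\varepsilon_j - \varepsilon_5}(0)^n v_{\lambda_i}$: (i) $x(m) w_n = 0$ for $x \in gl(4)$, $m \ge 1$, and (ii) $e_\alpha(0) w_n = 0$ for every positive root $\alpha$ of $sl(4)$. Condition (i) is immediate from the grading $\g = \g_{-1} \oplus \g_0 \oplus \g_1$ of Proposition \ref{konf-embedding}: since $gl(4) = \g_0$ normalizes $\g_{-1}$, the commutator $[x(m), f_{\varepsilon_j - \varepsilon_5}(0)]$ lies in $\g_{-1}(m)$, and induction on $n$ reduces $x(m) w_n$ to a sum of terms $y(m) v_{\lambda_i} = 0$ with $y \in \g_{-1}$ and $m \ge 1$.

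For (ii), write $\alpha = \varepsilon_p - \varepsilon_q$ with $1 \le p < q \le 4$. A direct root-sum check shows $[e_\alpha, f_{\varepsilon_j - \varepsilon_5}]$ is nonzero precisely when $p = j$, in which case it is a scalar multiple of $f_{\varepsilon_q - \varepsilon_5}$. When $p \ne j$, $e_\alpha(0)$ commutes with $f_{\varepsilon_j - \varepsilon_5}(0)$, and the vanishing follows from $e_\alpha(0) v_{\lambda_i} = 0$. When $p = j$ (so $j < q \le 4$), using also $[f_{\varepsilon_q - \varepsilon_5}, f_{\varepsilon_j - \varepsilon_5}] = 0$, one gets
\begin{equation*}
e_\alpha(0) w_n \;=\; \mathrm{const} \cdot n \cdot f_{\varepsilon_j - \varepsilon_5}(0)^{n-1}\, f_{\varepsilon_q - \varepsilon_5}(0)\, v_{\lambda_i},
\end{equation*}
so the whole argument reduces to proving $f_{\varepsilon_q - \varepsilon_5}(0) v_{\lambda_i} = 0$ for every $q$ with $j < q \le 4$.

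This is the main step, where the hypothesis $a_{i,k} = 0$ for $k > j$ is used. Because $L_{-5/2}(\lambda_i)$ is simple and its degree-$0$ subspace is generated by $v_{\lambda_i}$ under $sl(5)$ zero modes, its top component is the irreducible $sl(5)$--module $V(\lambda_i)$. Standard Verma-module theory then yields $f_{\alpha_k}(0) v_{\lambda_i} = 0$ whenever $\lambda_i(\alpha_k^\vee) = 0$, that is, for all simple roots $\alpha_k$ with $k > j$. Writing each $f_{\varepsilon_q - \varepsilon_5}$ (for $j < q \le 4$) as an iterated commutator of $f_{\alpha_k}$'s with all indices $k \in \{q, q+1, \ldots, 4\}$ (so $k > j$) and expanding, every resulting term has some $f_{\alpha_k}(0) v_{\lambda_i} = 0$ sitting innermost, whence $f_{\varepsilon_q - \varepsilon_5}(0) v_{\lambda_i} = 0$.

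Non-triviality of $w_n$ would be settled via the $sl(2)$--triple $(e_{\varepsilon_j - \varepsilon_5}, h_{\varepsilon_j - \varepsilon_5}, f_{\varepsilon_j - \varepsilon_5})$: on $v_{\lambda_i}$, the element $h_{\varepsilon_j - \varepsilon_5}$ acts by $\sum_{k \ge j} a_{i,k} = a_{i,j}$, and inspection of Proposition \ref{Prop1} shows $a_{i,j} \in \{-\tfrac{5}{2}, -\tfrac{3}{2}, -\tfrac{1}{2}, \tfrac{1}{2}\}$ in every case, in particular never a non-negative integer. Thus the $sl(2)$--Verma module of this highest weight is irreducible, and the $sl(2)$--submodule of $V(\lambda_i) \subset L_{-5/2}(\lambda_i)$ generated by $v_{\lambda_i}$ is isomorphic to it, so $f^n v_{\lambda_i} \ne 0$ for every $n$. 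The main obstacle I anticipate is the justification of the key vanishing $f_{\varepsilon_q - \varepsilon_5}(0) v_{\lambda_i} = 0$—in particular the identification of the top component of $L_{-5/2}(\lambda_i)$ with $V(\lambda_i)$ and of the relevant Verma singular vectors—whereas the commutator manipulations themselves are routine.
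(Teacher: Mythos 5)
Your argument is correct: the reduction of the singular-vector conditions to $f_{\varepsilon_q-\varepsilon_5}(0)v_{\lambda_i}=0$ for $q>j$, the vanishing of these vectors via $a_{i,k}=0$ for $k>j$ and irreducibility of the top component $V(\lambda_i)$, and the non-vanishing of $f_{\varepsilon_j-\varepsilon_5}(0)^n v_{\lambda_i}$ via the $sl(2)$--triple with non-dominant-integral weight $a_{i,j}\in\{-\tfrac52,-\tfrac32,-\tfrac12,\tfrac12\}$ all check out. The paper proves this proposition only by asserting a "straightforward calculation," and your proof is essentially that calculation carried out in detail, so it matches the intended approach.
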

\begin{proof}
Straightforward calculation.
\end{proof}

The next proposition gives the remaining singular vectors:

\begin{proposition}\label{sing2} 
	\begin{itemize}
		\item [(1)]For any $i =1, \dots, 16$ such that $\widehat {\lambda}_i(\alpha_{0}^{\vee}) \notin {\Bbb Z}_{\ge 0}$, and any $n \in {\Bbb Z}_{\ge 0}$, $  e_{\varepsilon_1 - \varepsilon_5} (-1) ^{n}  v_{\lambda_i}$ is a (non-trivial) singular vector for  $L_{-5/2} (gl(4)) = L_{-5/2} (sl(4)) \otimes M_c(1).$
		\item [(2)]	The following table contains formulas for (non-trivial) singular vectors for $L_{-5/2} (gl(4)) = L_{-5/2} (sl(4)) \otimes M_c(1)$ in $L_{-5/2}(\lambda_i)$, for $i=2,3,4,5,12,13,14,15$, and for any $n \in {\Bbb Z}_{\ge 0}$:
		\begin{center}
			\begin{tabular}{ |c|c|c| } 
				\hline
				$\lambda_i$ & singular vectors  \\ 
				\hline \hline
				$\lambda_2, \lambda_{12}, \lambda_{13}, \lambda_{14}$ & $e_{\varepsilon_2-\varepsilon_5}(-1)^n v_{\lambda_i}$   \\ 
				\hline
				$\lambda_3, \lambda_{15}$ & $e_{\varepsilon_3-\varepsilon_5}(-1)^n v_{\lambda_i}$  \\ 
				\hline
				$\lambda_4$ & $e_{\varepsilon_4-\varepsilon_5}(-1)^n v_{\lambda_i}$  \\ 
				\hline
				$\lambda_5$ & $e_{\varepsilon_1-\varepsilon_5}(-2)^n v_{\lambda_i}$  \\ 
				\hline
			\end{tabular}
		\end{center}
	\end{itemize} 
\end{proposition}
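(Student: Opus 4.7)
The plan is to check the two defining conditions for being a singular vector for $L_{-5/2}(gl(4))$, namely $e_\alpha(0) v = 0$ for every positive root $\alpha$ of $sl(4)$ and $x(m) v = 0$ for every $x \in gl(4)$ and $m \geq 1$. Two elementary matrix-unit identities underlie every case: for a positive root $\alpha$ of $gl(4)$, the bracket $[e_\alpha, e_{\varepsilon_j - \varepsilon_5}]$ either vanishes or equals $e_{\varepsilon_\ell - \varepsilon_5}$ with $\ell < j$ (the latter only when $\alpha = \varepsilon_\ell - \varepsilon_j$), and the invariant form $(e_\alpha \,|\, e_{\varepsilon_j - \varepsilon_5}) = 0$. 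For part (1) I would use that since $\theta$ is the highest root of $sl(5)$, $[e_\alpha, e_\theta] = 0$ for every positive root $\alpha$ of $gl(4)$, so $e_\alpha(m)$ commutes with all powers of $e_\theta(-1)$; while the Cartan commutator $[h(m), e_\theta(-1)] = \theta(h) \, e_\theta(m-1)$ produces either a positive mode of $e_\theta$ (for $m \geq 2$) or $e_\theta(0)$ (for $m=1$), both of which kill $v_{\lambda_i}$. Hence $e_\theta(-1)^n v_{\lambda_i}$ is automatically singular for $\widehat{gl(4)}$. Non-triviality follows from the $\widehat{sl(2)}$-subalgebra generated by $f_\theta(1), e_\theta(-1), K - \theta^\vee$: since $v_{\lambda_i}$ is killed by $f_\theta(1) = e_{\alpha_0}$ and $\widehat{\lambda}_i(\alpha_0^\vee) \notin \mathbb{Z}_{\geq 0}$ by hypothesis, the $\widehat{sl(2)}$-submodule it generates is the irreducible Verma module, and all powers $e_\theta(-1)^n v_{\lambda_i}$ survive in $L_{-5/2}(\lambda_i)$.

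For part (2) the first step is to observe, by direct evaluation, that each of the eight listed weights satisfies $\widehat{\lambda}_i(\alpha_0^\vee) = 0$, so by Kac--Kazhdan the vector $e_\theta(-1) v_{\lambda_i}$ is a singular vector in $M(\widehat{\lambda}_i)$ and vanishes in $L_{-5/2}(\lambda_i)$. This identity is the universal driver of the subsequent case analysis. For a proposed singular vector $e_{\varepsilon_j - \varepsilon_5}(-1)^n v_{\lambda_i}$ with $j \geq 2$, the only obstruction from a simple-root zero mode comes from $e_{\alpha_{j-1}}(0)$ and produces $n \cdot e_{\varepsilon_{j-1} - \varepsilon_5}(-1) e_{\varepsilon_j - \varepsilon_5}(-1)^{n-1} v_{\lambda_i}$; after swapping the commuting factors this reduces to verifying $e_{\varepsilon_{j-1} - \varepsilon_5}(-1) v_{\lambda_i} = 0$. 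I would then run the following cascade: whenever a simple root $\alpha_\ell$ satisfies $\widehat{\lambda}_i(\alpha_\ell^\vee) = 0$, one has $f_{\alpha_\ell}(0) v_{\lambda_i} = 0$, and applying $f_{\alpha_\ell}(0)$ to the already-established relation $e_\theta(-1) v_{\lambda_i} = 0$ together with $[f_{\alpha_\ell}, e_{\varepsilon_{\ell+1} - \varepsilon_5}] = e_{\varepsilon_\ell - \varepsilon_5}$ yields $e_{\varepsilon_\ell - \varepsilon_5}(-1) v_{\lambda_i} = 0$. Iterating this produces exactly the vanishings needed to dispatch each listed case. The $\lambda_5$ case is handled analogously for the vector $e_\theta(-2)^n v_{\lambda_5}$: the only Cartan obstruction is $h_1(1) e_\theta(-2)^n v_{\lambda_5} \propto e_\theta(-1) v_{\lambda_5} = 0$.

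Non-triviality in part (2) will once more follow from an $\widehat{sl(2)}$-argument. For $e_{\varepsilon_j - \varepsilon_5}(-1)^n v_{\lambda_i}$ I would consider the subalgebra attached to the positive real affine root $\delta - (\varepsilon_j - \varepsilon_5)$, whose coroot pairs with $\widehat{\lambda}_i$ to $k - \lambda_i((\varepsilon_j - \varepsilon_5)^\vee)$; for $e_\theta(-2)^n v_{\lambda_5}$ the relevant root is $2\delta - \theta$, giving the pairing $2k - \lambda_5(\theta^\vee) = -5/2$. A direct case-by-case check confirms that none of these pairings lie in $\mathbb{Z}_{\geq 0}$, so the associated Verma module is irreducible and every power survives in the simple quotient. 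The main technical obstacle will be the combinatorial bookkeeping of commutator cancellations across the eight cases of part (2); however, once the uniform mechanism (the identity $\widehat{\lambda}_i(\alpha_0^\vee) = 0$ triggering the Kac--Kazhdan cascade of derived vanishings $e_{\varepsilon_j - \varepsilon_5}(-1) v_{\lambda_i} = 0$) is recognized, each case reduces to the same short list of $sl(5)$ matrix-unit computations and becomes essentially mechanical.
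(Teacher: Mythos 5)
Your proposal is correct and takes essentially the same route as the paper, whose proof is literally ``straightforward calculation'': your scheme --- the vanishing $e_{\theta}(-1)v_{\lambda_i}=0$ whenever $\widehat{\lambda}_i(\alpha_0^\vee)=0$, the cascade of derived vanishings $e_{\varepsilon_\ell-\varepsilon_5}(-1)v_{\lambda_i}=0$ obtained by applying $f_{\alpha_\ell}(0)$, and the affine $sl(2)$ criterion $\widehat{\lambda}_i(\gamma^\vee)\notin\mathbb{Z}_{\ge 0}$ for non-vanishing of the powers --- is precisely the verification the paper leaves implicit, and all the pairings you need do come out to $-\tfrac12,-\tfrac32,-\tfrac52$. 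One small caution: in the $\lambda_5$ case the positive-mode obstructions are not only of Cartan type, since $f_{\varepsilon_1-\varepsilon_j}(1)$ applied to $e_\theta(-2)^n v_{\lambda_5}$ produces $e_{\varepsilon_j-\varepsilon_5}(-1)v_{\lambda_5}$ for $j=2,3,4$; these vanish by your own cascade (as $\lambda_5(\alpha_\ell^\vee)=0$ for $\ell=1,2,3$), so the argument stands once you include them.
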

\begin{proof}
Straightforward calculation. Since the condition $\widehat {\lambda}_i(\alpha_{0}^{\vee}) \notin {\Bbb Z}_{\ge 0}$ is satisfied for $i=1,6,7,8,9,10,11,16$, the assertion (1) gives the singular vectors in these cases. The remaining cases are given by assertion (2).
\end{proof}

\begin{theorem} \label{sl5-dekomp}
	    Using the notation from Proposition \ref{Zhu-moduli}, the decompositions of irreducible $L_{-5/2}(sl(5))$--modules in the category $\mathcal{O}$ as
	    $L_{-5/2} (sl(4)) \otimes M_c(1)$--modules are given by the following relations:
		\begin{align*}
		L_{-5/2} (\lambda_1) &= \bigoplus_{ n= 0} ^{\infty} L_{-5/2} ( \mu_1(n) ) \otimes M_c(1,n) \   \oplus   \  \bigoplus_{ n= 1} ^{\infty} L_{-5/2}  ( \mu_2(n) ) \otimes M_c(1,-n) ,\\
		L_{-5/2} (\lambda_2) &= \bigoplus_{ n= 0} ^{\infty} L_{-5/2} ( \mu_1(-n-\tfrac{5}{2}) ) \otimes M_c(1,-\tfrac{1}{2}-n) \   \oplus   \  \bigoplus_{ n= 1} ^{\infty} L_{-5/2}  ( \mu_3(-n-\tfrac{5}{2}) )\otimes M_c(1,-\tfrac{1}{2}+n),\\
		L_{-5/2} (\lambda_3) &= \bigoplus_{ n= 0} ^{\infty} L_{-5/2} ( \mu_3(n) ) \otimes M_c(1,-1-n)  \   \oplus   \  \bigoplus_{ n= 1} ^{\infty} L_{-5/2}  ( \mu_4(-n-\tfrac{5}{2}) ) \otimes M_c(1,-1+n) ,\\
		L_{-5/2} (\lambda_4) &= \bigoplus_{ n= 0} ^{\infty} L_{-5/2} ( \mu_4(n) ) \otimes M_c(1,-\tfrac{3}{2}-n) \   \oplus   \  \bigoplus_{ n= 1} ^{\infty} L_{-5/2}  ( \mu_2(-n-\tfrac{5}{2}) ) \otimes M_c(1,-\tfrac{3}{2}+n),\\
		L_{-5/2} (\lambda_5) &= \bigoplus_{ n= 0} ^{\infty} L_{-5/2} ( \mu_2(n) ) \otimes M_c(1,-2-n) \   \oplus   \  \bigoplus_{ n= 1} ^{\infty} L_{-5/2}  ( \mu_1(n) ) \otimes M_c(1,-2+n), \\
		L_{-5/2} (\lambda_6) &= \bigoplus_{ n= 0} ^{\infty} L_{-5/2} ( \mu_7(n+\tfrac{1}{2})  ) \otimes M_c(1,-\tfrac{1}{2}-n) \   \oplus   \  \bigoplus_{ n= 1} ^{\infty} L_{-5/2}  ( \mu_{ 5}   (n+\tfrac{1}{2})   ) \otimes M_c(1,-\tfrac{1}{2}+n), \\
		L_{-5/2} (\lambda_7) &= \bigoplus_{ n= 0} ^{\infty} L_{-5/2} ( \mu_{14} (n ) )  \otimes M_c(1,-1-n)  \   \oplus   \  \bigoplus_{ n= 1} ^{\infty} L_{-5/2}  ( \mu_{10}  ( n-\tfrac{1}{2}  )  )\otimes M_c(1,-1+n),\\
		L_{-5/2} (\lambda_8) &= \bigoplus_{ n= 0} ^{\infty} L_{-5/2} ( \mu_{9} (n) )  \otimes M_c(1,-\tfrac{3}{2}-n)  \   \oplus   \  \bigoplus_{ n= 1} ^{\infty} L_{-5/2}  ( \mu_{1}  (n-\tfrac{3}{2})  ) \otimes M_c(1,-\tfrac{3}{2}+n), \\
		L_{-5/2} (\lambda_9) &= \bigoplus_{ n= 0} ^{\infty} L_{-5/2} ( \mu_{8} (n-\tfrac{1}{2}) )  \otimes M_c(1,-\tfrac{1}{2}-n)  \   \oplus   \  \bigoplus_{ n= 1} ^{\infty} L_{-5/2}  ( \mu_{15}  (n) ) \otimes M_c(1,-\tfrac{1}{2}+n), \\
		L_{-5/2} (\lambda_{10}) &= \bigoplus_{ n= 0} ^{\infty} L_{-5/2} ( \mu_{6} (n) )  \otimes M_c(1,-1-n)  \   \oplus   \  \bigoplus_{ n= 1} ^{\infty} L_{-5/2}  ( \mu_{5}  (n) ) \otimes M_c(1,-1+n), \\
			L_{-5/2} (\lambda_{11}) &= \bigoplus_{ n= 0} ^{\infty} L_{-5/2} ( \mu_{2} (n-\tfrac{3}{2}) )  \otimes M_c(1,-\tfrac{1}{2}-n)  \   \oplus   \  \bigoplus_{ n= 1} ^{\infty} L_{-5/2}  ( \mu_{10}  (n) ) \otimes M_c(1,-\tfrac{1}{2}+n), \\
		L_{-5/2} (\lambda_{12}) &= \bigoplus_{ n= 0} ^{\infty} L_{-5/2} ( \mu_{11} (n+\tfrac{1}{2}) ) \otimes M_c(1,-1-n)   \   \oplus   \  \bigoplus_{ n= 1} ^{\infty} L_{-5/2}  ( \mu_{12}  (n+\tfrac{1}{2}) ) \otimes M_c(1,-1+n), \\
		L_{-5/2} (\lambda_{13}) &= \bigoplus_{ n= 0} ^{\infty} L_{-5/2} ( \mu_{13} (n) )  \otimes M_c(1,-\tfrac{3}{2}-n)  \   \oplus   \  \bigoplus_{ n= 1} ^{\infty} L_{-5/2}  ( \mu_{7}  (-n-\tfrac{1}{2}) ) \otimes M_c(1,-\tfrac{3}{2}+n), \\
			L_{-5/2} (\lambda_{14}) &= \bigoplus_{ n= 0} ^{\infty} L_{-5/2} ( \mu_{9} (n-\tfrac{1}{2}) )  \otimes M_c(1,-1-n)  \   \oplus   \  \bigoplus_{ n= 1} ^{\infty} L_{-5/2}  ( \mu_{16}  (n) ) \otimes M_c(1,-1+n), \\	
		L_{-5/2} (\lambda_{15}) &= \bigoplus_{ n= 0} ^{\infty} L_{-5/2} ( \mu_{6} (n+\tfrac{1}{2}) ) \otimes M_c(1,-\tfrac{3}{2}-n)    \   \oplus   \  \bigoplus_{ n= 1} ^{\infty} L_{-5/2}  ( \mu_{8}  (-n-\tfrac{3}{2}) ) \otimes M_c(1,-\tfrac{3}{2}+n), \\
		L_{-5/2} (\lambda_{16} ) &= \bigoplus_{ n= 0} ^{\infty} L_{-5/2} ( \mu_{13} (n-\tfrac{1}{2}) ) \otimes M_c(1,-1-n) \   \oplus   \  \bigoplus_{ n= 1} ^{\infty} L_{-5/2}  ( \mu_{15} (n-\tfrac{1}{2}) ) \otimes M_c(1,-1+n). 
		\end{align*}
\end{theorem}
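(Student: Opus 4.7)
The strategy is to apply Proposition \ref{singular} to each irreducible $L_{-5/2}(sl(5))$--module $L_{-5/2}(\lambda_i)$, using the explicit $\widehat{gl(4)}$--singular vectors from Propositions \ref{sing1} and \ref{sing2} to identify the individual summands. First I would verify that the element $c$ acts semisimply with integer or half-integer spectrum on each $L_{-5/2}(\lambda_i)$; this is immediate since $c$ lies in the Cartan of $sl(5)$ and the eigenvalues on the top component $V_{sl(5)}(\lambda_i)$ are determined by $\lambda_i(c)$ plus integers, while the action of $sl(5)_{-1}$ and the $\hat{sl(5)}$--current modes preserve these congruence classes modulo $\mathbb{Z}$. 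Consequently, by Proposition \ref{singular}(1), the decomposition
\[
L_{-5/2}(\lambda_i) \;=\; \bigoplus_{s \in \Delta_i + \mathbb{Z}} L_{-5/2}(\lambda_i)^{(s)}
\]
has irreducible $L_{-5/2}(sl(4)) \otimes M_c(1)$--summands.

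Next, for each component $L_{-5/2}(\lambda_i)^{(s)}$ I need to exhibit at least one $\widehat{gl(4)}$--singular vector lying in the given $c$--weight $s$. Proposition \ref{sing1} produces the vectors $f_{\varepsilon_j - \varepsilon_5}(0)^n v_{\lambda_i}$ whose $c$--eigenvalue is $\lambda_i(c) - n$ (giving the negative $n$--branch in each decomposition), and Proposition \ref{sing2} produces, depending on the case, singular vectors of the form $e_{\varepsilon_j-\varepsilon_5}(-1)^n v_{\lambda_i}$ or $e_{\varepsilon_1-\varepsilon_5}(-2)^n v_{\lambda_i}$ whose $c$--eigenvalue is $\lambda_i(c) + n$ (giving the positive $n$--branch). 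For each such vector I would compute its $sl(4)$--weight by using $[h, e_{\varepsilon_j-\varepsilon_5}] = (\varepsilon_j - \varepsilon_5)(h) \, e_{\varepsilon_j-\varepsilon_5}$ for $h \in \mathfrak{h}_{sl(4)}$, respectively $[h, f_{\varepsilon_j-\varepsilon_5}] = -(\varepsilon_j-\varepsilon_5)(h) \, f_{\varepsilon_j-\varepsilon_5}$, and verify that the resulting $sl(4)$--weight matches, with parameter $t$ chosen appropriately, one of the sixteen weights $\mu_j(t)$ from Proposition \ref{Zhu-moduli}. Then Proposition \ref{singular}(2) identifies the summand as $L_{-5/2}(\mu_j(t)) \otimes M_c(1, s)$.

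At this stage it remains to check that the listed summands exhaust $L_{-5/2}(\lambda_i)$, i.e.\ that no $c$--eigenspace is missed and that each one contributes only a single highest weight. For the first issue, the spectrum of $c(0)$ on $L_{-5/2}(\lambda_i)$ coincides with $\lambda_i(c) + \mathbb{Z}$ since the raising operator $e_{\varepsilon_1-\varepsilon_5}$ and the lowering operators $f_{\varepsilon_j-\varepsilon_5}$, $j=1,\dots,4$, generate nonzero vectors in every integer shift (they are injective on a Verma--type quotient and nonzero on the constructed singular vectors above). For the second, the irreducibility of each $L_{-5/2}(\lambda_i)^{(s)}$ given by Proposition \ref{singular}(1) ensures uniqueness of its $sl(4)$--highest weight, so the exhaustive list in each of the sixteen identities follows.

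The main obstacle will be the bookkeeping of identifying, for each of the 16 values of $i$ and for each sign of the branch, precisely which weight family $\mu_j(\cdot)$ from Proposition \ref{Zhu-moduli} matches the $sl(4)$--weight of the constructed singular vector and with what affine parameter. This is a routine but lengthy case analysis: one computes the restriction of $\lambda_i$ to the Cartan of $sl(4)$, then subtracts (or adds) the appropriate multiple of $\varepsilon_j-\varepsilon_5$ projected to $\mathfrak{h}_{sl(4)}^*$, and compares with the list $\mu_1(t), \dots, \mu_{16}(t)$; the consistency of the outcome with the known classification in Theorem \ref{kvocijent-moduli} (all weights appearing must belong to that list, since each summand is an $L_{-5/2}(sl(4))$--module in the category $\mathcal{O}$) serves as a sanity check.
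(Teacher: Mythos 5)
Your plan is essentially the paper's own proof: the authors simply deduce the theorem from Proposition \ref{singular} together with the explicit $\widehat{gl(4)}$--singular vectors of Propositions \ref{sing1} and \ref{sing2}, with the weight bookkeeping against the list $\mu_1(t),\dots,\mu_{16}(t)$ done exactly as you describe (the vacuum case $\lambda_1$ being the known decomposition (\ref{dec-rig}) from \cite{AKMPP-16}). So the proposal is correct and matches the paper's approach.
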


\begin{proof}
	The proof follows from Propositions \ref{singular}, \ref{sing1} and \ref{sing2}.
\end{proof}

\section{$L_{-5/2} (sl(4))$--module $L_{-5/2}( t \omega_1)$ as a subquotient of a relaxed $L_{-5/2} (sl(5))$--module}

In previous sections we proved that $L_{-5/2}(t \omega_1)$ is an irreducible $L_{-5/2}( sl(4))$--module for any $t \in \mathbb{C}$ (see Theorems \ref{kvocijent-moduli} and \ref{main-max}, Remark \ref{rem-klas-simple}). On the other hand, in Theorem \ref{sl5-dekomp} we identified modules $L_{-5/2}( t \omega_1)$ as submodules of $L_{-5/2}(sl(5))$--modules from the category $\mathcal{O}$, only for countably many $t \in \mathbb{C}$. In this section we consider a realization of $L_{-5/2}( t \omega_1)$ as a subquotient of 
an $L_{-5/2} (sl(5))$--module, for any $t \in \mathbb{C}$. It turns out that, in order to obtain such realization, one has to consider modules beyond the category $\mathcal{O}$, more precisely, the relaxed $L_{-5/2} (sl(5))$--modules.
Relaxed modules for affine vertex algebras have also recently been studied in \cite{FMR, Kaw, KR1, KR2}. 

We also prove that there is a non-trivial homomorphism from Zhu's algebra $A(L_{-5/2} (sl(4)))$ to Weyl algebra $\mathcal A_4 $, which implies that any module for  $\mathcal A_4 $ is naturally a module for $A(L_{-5/2} (sl(4)))$, so it can be induced to an $L_{-5/2} (sl(4))$--module (see Proposition \ref{Zhu1} (2)). We hope that this result can be useful for studying $L_{-5/2} (sl(4))$--modules beyond category $\mathcal{O}$.

\subsection{ A homomorphism from Zhu's algebra to Weyl algebra}

Denote by $\mathcal A_4 $ the Weyl algebra with generators $x _1, x _2, x _3, x _4,  \partial  _1, \partial  _2, \partial  _3, \partial  _4$ and non-trivial commutation relations:
$$ [\partial  _i, x _j] = \delta _{ij}, \quad i,j=1,2,3,4.$$
\begin{theorem}
	There is a non-trivial homomorphism $$\Phi \colon A(L_{-5/2} (sl(4))) \rightarrow \mathcal A_4 $$ uniquely determined by
	 \begin{equation} \label{rel-hom-weyl}  \quad e_{\varepsilon_i - \varepsilon_j} \mapsto x _i  \partial  _j,  \quad f_{\varepsilon_i - \varepsilon_j} \mapsto x _j  \partial  _i, \quad 1 \leq i<j \leq 4.
	 \end{equation}	
\end{theorem}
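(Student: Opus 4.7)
The plan is to use the standard oscillator realization of $sl(4)$ on the polynomial algebra $\mathcal{P} = \mathbb{C}[x_1, x_2, x_3, x_4]$ sitting inside the Weyl algebra $\mathcal{A}_4$. First I would extend the proposed formulas (\ref{rel-hom-weyl}) to a map on all of $sl(4)$ by setting $h_i \mapsto x_i \partial_i - x_{i+1}\partial_{i+1}$ for $i=1,2,3$ (equivalently, $E_{ij} \mapsto x_i \partial_j$ on the full basis). A direct check of the brackets — or invoking the classical fact that these formulas define a $gl(4)$-action on $\mathcal{P}$ — shows that this assignment is a Lie algebra homomorphism from $sl(4)$ into $\mathcal{A}_4$ with the commutator bracket. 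By the universal property, it lifts uniquely to a unital associative algebra homomorphism $\widetilde\Phi \colon \mathcal{U}(sl(4)) \to \mathcal{A}_4$.

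Next, by Theorem \ref{main-max}(i) combined with Proposition \ref{Zhu-algebra}, we have $A(L_{-5/2}(sl(4))) \cong \mathcal{U}(sl(4))/\langle v' \rangle$, so it suffices to verify that $\widetilde\Phi(v') = 0$. The key observation is faithfulness: $\mathcal{P}$ is a faithful $\mathcal{A}_4$-module, and under $\widetilde\Phi$ it decomposes into its homogeneous components $\mathcal{P}_n$ (each operator $x_i \partial_j$ preserves total degree). Each $\mathcal{P}_n$ is the $n$-th symmetric power of the standard representation, with highest weight vector $x_1^n$ of weight $n \omega_1$, so $\mathcal{P}_n \cong V(n \omega_1)$ as an $sl(4)$-module. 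By Corollary \ref{Korolar 1}, every $V(n\omega_1)$ with $n \in \mathbb{Z}_{\ge 0}$ is a module for $A(\widetilde{L}_{-5/2}(sl(4))) = A(L_{-5/2}(sl(4)))$, meaning precisely that $v'$ annihilates it. Consequently $\widetilde\Phi(v')$ acts as zero on every $\mathcal{P}_n$, hence on all of $\mathcal{P}$, and faithfulness of the Weyl-algebra representation forces $\widetilde\Phi(v') = 0$. The induced map $\Phi$ is non-trivial since, e.g., $\Phi(e_{\varepsilon_1 - \varepsilon_2}) = x_1 \partial_2 \ne 0$.

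The hard part is essentially conceptual rather than computational: once one recognizes the polynomial representation as producing a whole tower $\{V(n\omega_1) : n \ge 0\}$ of $sl(4)$-modules on a single faithful $\mathcal{A}_4$-module, the classification established in Section \ref{sect-class-quotient} does all the work, and no direct manipulation with the lengthy expression for $v'$ from Proposition \ref{Zhu-algebra} is required. The only alternative route I can see would be to compute $\widetilde\Phi(v')$ term-by-term from that explicit formula and simplify inside $\mathcal{A}_4$, which would be correct but much more painful.
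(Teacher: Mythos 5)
Your proof is correct, but it takes a genuinely different route from the paper. The paper proves $\Phi(v')=0$ by brute force: it lists, term by term, the images under $\Phi$ of the monomials occurring in the explicit expression for $v'$ from Proposition \ref{Zhu-algebra} and exhibits the cancellations directly inside $\mathcal{A}_4$. You avoid touching that formula altogether: you observe that the oscillator realization makes $\mathbb{C}[x_1,\dots,x_4]=\bigoplus_{n\ge 0}\mathcal{P}_n$ a faithful $\mathcal{A}_4$--module on which each $x_i\partial_j$ preserves degree, that $\mathcal{P}_n\cong S^n V(\omega_1)\cong V(n\omega_1)$ as an $sl(4)$--module, and that by Proposition \ref{Zhu-moduli}/Corollary \ref{Korolar 1} (via Proposition \ref{Class-affine-quotient}) each $V(n\omega_1)$ is an $A(\widetilde{L}_{-5/2}(sl(4)))$--module, so $v'$ acts as zero on every $\mathcal{P}_n$; faithfulness then forces $\widetilde\Phi(v')=0$, and since $\widetilde\Phi$ is an algebra map the whole two-sided ideal $\langle v'\rangle$ dies, so $\Phi$ descends. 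This is a valid and cleaner argument, with no circularity: the classification results of Section \ref{sect-class-quotient} and the identification $A(L_{-5/2}(sl(4)))\cong\mathcal{U}(sl(4))/\langle v'\rangle$ (which needs Theorem \ref{main-max}(i), just as the paper's own last line does) are all established before Section 8. What each approach buys: yours replaces a lengthy Weyl-algebra computation by the already-computed polynomials $p_1,p_2$ of Lemma \ref{lema-polinomi} and standard representation theory, and it makes transparent \emph{why} the ideal must vanish (the polynomial module is built entirely out of weights $n\omega_1$ allowed by the classification); the paper's computation is self-contained modulo the formula for $v'$ and does not invoke the category-$\mathcal{O}$ classification. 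The only points worth stating explicitly in your write-up are the ones you already gesture at: that the restriction of $\widetilde\Phi(v')$ to $\mathcal{P}_n$ is exactly the module action of $v'$ on $V(n\omega_1)$, and that being an $A(\widetilde{L}_{-5/2}(sl(4)))$--module means the ideal generated by $v'$ (in particular $v'$ itself) annihilates the module.
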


\begin{proof}
	Clearly, there is a non-trivial homomorphism $\Phi \colon \mathcal{U} (sl(4)) \rightarrow \mathcal A_4 $ uniquely determined by relation (\ref{rel-hom-weyl}).
	By direct calculation we obtain 
	\begin{align*}
	&\Phi(e_{\varepsilon_2-\varepsilon_4}e_{\varepsilon_1-\varepsilon_3})=\Phi(e_{\varepsilon_2-\varepsilon_3}e_{\varepsilon_1-\varepsilon_4})\\
	&\Phi(h_1e_{\varepsilon_2-\varepsilon_4}e_{\varepsilon_1-\varepsilon_3})=\Phi(h_{1}e_{\varepsilon_2-\varepsilon_3}e_{\varepsilon_1-\varepsilon_4})\\
	&\Phi(h_{2}e_{\varepsilon_2-\varepsilon_4}e_{\varepsilon_1-\varepsilon_3})=\Phi(h_{2}e_{\varepsilon_2-\varepsilon_3}e_{\varepsilon_1-\varepsilon_4})\\
	&\Phi(h_{3}e_{\varepsilon_2-\varepsilon_4}e_{\varepsilon_1-\varepsilon_3})=\Phi(h_{3}e_{\varepsilon_2-\varepsilon_3}e_{\varepsilon_1-\varepsilon_4})\\
	&\Phi(h_{1}h_{2}e_{\varepsilon_2-\varepsilon_4}e_{\varepsilon_1-\varepsilon_3})=\Phi(h_{1}h_{2}e_{\varepsilon_2-\varepsilon_3}e_{\varepsilon_1-\varepsilon_4})\\
	&\Phi(h_{2}^2e_{\varepsilon_2-\varepsilon_4}e_{\varepsilon_1-\varepsilon_3})=\Phi(h_{2}^2e_{\varepsilon_2-\varepsilon_3}e_{\varepsilon_1-\varepsilon_4})\\
	&\Phi(h_{2}h_{3}e_{\varepsilon_2-\varepsilon_4}e_{\varepsilon_1-\varepsilon_3})=\Phi(h_{2}h_{3}e_{\varepsilon_2-\varepsilon_3}e_{\varepsilon_1-\varepsilon_4})\\
	&\Phi(f_{\varepsilon_1-\varepsilon_2}e_{\varepsilon_1-\varepsilon_2}e_{\varepsilon_2-\varepsilon_4}e_{\varepsilon_1-\varepsilon_3})=\Phi(f_{\varepsilon_1-\varepsilon_2}e_{\varepsilon_1-\varepsilon_2}e_{\varepsilon_2-\varepsilon_3}e_{\varepsilon_1-\varepsilon_4})\\
	&\Phi(f_{\varepsilon_1-\varepsilon_3}e_{\varepsilon_1-\varepsilon_3}e_{\varepsilon_2-\varepsilon_4}e_{\varepsilon_1-\varepsilon_3})=\Phi(f_{\varepsilon_1-\varepsilon_3}e_{\varepsilon_1-\varepsilon_3}e_{\varepsilon_2-\varepsilon_3}e_{\varepsilon_1-\varepsilon_4})\\
	&\Phi(f_{\varepsilon_1-\varepsilon_4}e_{\varepsilon_1-\varepsilon_4}e_{\varepsilon_2-\varepsilon_4}e_{\varepsilon_1-\varepsilon_3})=\Phi(f_{\varepsilon_1-\varepsilon_4}e_{\varepsilon_1-\varepsilon_4}e_{\varepsilon_2-\varepsilon_3}e_{\varepsilon_1-\varepsilon_4})\\
	&\Phi(f_{\varepsilon_2-\varepsilon_3}e_{\varepsilon_2-\varepsilon_3}e_{\varepsilon_2-\varepsilon_4}e_{\varepsilon_1-\varepsilon_3})=\Phi(f_{\varepsilon_2-\varepsilon_3}e_{\varepsilon_2-\varepsilon_3}^2e_{\varepsilon_1-\varepsilon_4})\\
	&\Phi(f_{\varepsilon_2-\varepsilon_4}e_{\varepsilon_2-\varepsilon_4}^2e_{\varepsilon_1-\varepsilon_3})=\Phi(f_{\varepsilon_2-\varepsilon_4}e_{\varepsilon_2-\varepsilon_4}^2e_{\varepsilon_1-\varepsilon_4})\\
	&\Phi(f_{\varepsilon_3-\varepsilon_4}e_{\varepsilon_3-\varepsilon_4}e_{\varepsilon_2-\varepsilon_4}e_{\varepsilon_1-\varepsilon_3})=\Phi(f_{\varepsilon_3-\varepsilon_4}e_{\varepsilon_3-\varepsilon_4}e_{\varepsilon_2-\varepsilon_3}e_{\varepsilon_1-\varepsilon_4})\\
	&\Phi(h_{1}h_{3}e_{\varepsilon_2-\varepsilon_4}e_{\varepsilon_1-\varepsilon_3})=\Phi(h_{1}h_{3}e_{\varepsilon_2-\varepsilon_3}e_{\varepsilon_1-\varepsilon_4})=\\&=x_1^2 x_2 x_3 \partial_1 \partial_3^2 \partial_4 - x_1 x_2^2 x_3 \partial_2 \partial_3^2 \partial_4 - x_1^2 x_2 x_4 \partial_1 \partial_3 \partial_4^2 + x_1 x_2^2 x_4 \partial_2 \partial_3 \partial_4^2\\
	&\Phi(e_{\varepsilon_3-\varepsilon_4}e_{\varepsilon_2-\varepsilon_3}e_{\varepsilon_1-\varepsilon_3})=x_1 x_2 x_3 \partial_3^2 \partial_4\\
	&\Phi(e_{\varepsilon_3-\varepsilon_4}e_{\varepsilon_2-\varepsilon_3}^2 e_{\varepsilon_1-\varepsilon_2})=x_1 x_2^2 x_3 \partial_2 \partial_3^2 \partial_4\\
	&\Phi(h_{3}e_{\varepsilon_1-\varepsilon_2}e_{\varepsilon_2-\varepsilon_3} e_{\varepsilon_2-\varepsilon_4})=2 x_1 x_2 x_3 \partial_3 ^2 \partial_4 + x_1 x_2^2 x_3 \partial_2 \partial_3 ^2 \partial_4 - 2 x_1 x_2 x_4 \partial_3 \partial_4^2 - x_1 x_2^2 x_4 \partial_2 \partial_3 \partial_4^2\\
	&\Phi(h_{1}e_{\varepsilon_3-\varepsilon_4}e_{\varepsilon_2-\varepsilon_3} e_{\varepsilon_1-\varepsilon_3})=x_1^2 x_2 x_3 \partial_1 \partial_3^2 \partial_4 - x_1 x_2^2 x_3 \partial_2 \partial_3 ^2 \partial_4\\
	&\Phi(h_{3}f_{\varepsilon_1-\varepsilon_2}e_{\varepsilon_1-\varepsilon_4} e_{\varepsilon_1-\varepsilon_3})=2 x_1 x_2 x_3 \partial_3 ^2 \partial_4 + x_1^2 x_2 x_3 \partial_1 \partial_3 ^2 \partial_4 - 2 x_1 x_2 x_4 \partial_3 \partial_4^2 - x_1^2 x_2 x_4 \partial_1 \partial_3 \partial_4^2\\
	&\Phi(f_{\varepsilon_1-\varepsilon_2}e_{\varepsilon_3-\varepsilon_4} e_{\varepsilon_1-\varepsilon_3}^2)=2 x_1 x_2 x_3 \partial_3 ^2 \partial_4 + x_1^2 x_2 x_3 \partial_1 \partial_3 ^2 \partial_4\\
	&\Phi(h_{1}f_{\varepsilon_3-\varepsilon_4}e_{\varepsilon_2-\varepsilon_4} e_{\varepsilon_1-\varepsilon_4})=x_1^2 x_2 x_4 \partial_1 \partial_3 \partial_4^2 - x_1 x_2^2 x_4 \partial_2 \partial_3 \partial_4^2\\
	&\Phi(f_{\varepsilon_3-\varepsilon_4}e_{\varepsilon_1-\varepsilon_2} e_{\varepsilon_2-\varepsilon_4}^2)=2 x_1 x_2 x_4 \partial_3 \partial_4^2 + x_1 x_2^2 x_4 \partial_2 \partial_3 \partial_4^2\\
	&\Phi(f_{\varepsilon_3-\varepsilon_4}f_{\varepsilon_1-\varepsilon_2} e_{\varepsilon_1-\varepsilon_4}^2)=2 x_1 x_2 x_4 \partial_3 \partial_4^2 + x_1^2 x_2 x_4 \partial_1 \partial_3 \partial_4^2.
	\end{align*}
	These relations imply that $\Phi(v')=0$, where $v'$ is given in Proposition $\ref{Zhu-algebra}$. Since $A(L_{-5/2} (sl(4))) = \mathcal{U} (sl(4)) /\langle v' \rangle $, the proof follows.
\end{proof}

\subsection{Some relaxed $L_{-5/2}( sl(5))$--modules}

In this subsection we present a realization of $L_{-5/2}( sl(4))$--modules $L_{-5/2}(t \omega_1)$ using $sl(n)$--modules $M(\bf a)$ defined in \cite{BL}, and the conformal embedding $gl(4)  \hookrightarrow sl(5)$ at level $k=-5/2$.

We recall the definition of modules $M(\bf a)$ from \cite{BL}. For ${\bf a}= (a_1, a_2, \dots, a_n) \in {\Bbb C}^n$, we define $M(\bf a)$ as a complex vector space spanned by 
$$\{ x^{\bf b} = x_1 ^{b_1} \cdots x_n ^{b_n} \ \vert \  b_i - a_i \in  {\Bbb Z} \ \forall i, \  \sum_{i=1} ^n  (b_i - a_i)  = 0 \}.$$
The action of $sl(n)$ is given by
\begin{equation} \label{sl(n)-action}
 e_{\varepsilon_i - \varepsilon_j} = x _i  \partial  _j, \  f_{\varepsilon_i - \varepsilon_j} = x _j  \partial  _i,
 \end{equation}
for $ 1 \leq i < j \leq n$.

\begin{proposition} \cite{BL}
	For each n-tuple ${\bf a}= (a_1,\ldots, a_n) \in \mathbb{C} ^n$  with $a_i \notin \mathbb{Z}$ for all $i$, $M(\bf a)$ is an irreducible, torsion-free $sl(n)$--module.
\end{proposition}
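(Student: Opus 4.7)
The plan is to analyze $M({\bf a})$ through its weight space decomposition and handle both claims (torsion-freeness and irreducibility) by direct combinatorial arguments on monomials. The key point throughout will be that the non-integrality hypothesis $a_i \notin {\Bbb Z}$ forces $b_i \neq 0$ for every admissible exponent vector $\bf b$, so no computation ever produces a vanishing scalar.

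First I would record the weight structure. The Cartan element $E_{ii} - E_{jj}$ acts on $x^{\bf b}$ as multiplication by $b_i - b_j$, so $M({\bf a})$ is a weight module under $\mathfrak{h}$ with one-dimensional weight spaces, each spanned by a single monomial. Distinct monomials have distinct weights: the common sum $\sum b_i = \sum a_i$ is fixed, so the differences $b_i - b_j$ determine all the $b_i$. Consequently any $\mathfrak{h}$-stable subspace, in particular any submodule, is spanned by the monomials it contains.

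Next I would prove torsion-freeness. For the root vectors one computes
\begin{equation*}
(x_i \partial_j) \cdot x^{\bf b} = b_j\, x^{{\bf b}+\varepsilon_i-\varepsilon_j}, \qquad (x_j \partial_i)\cdot x^{\bf b} = b_i\, x^{{\bf b}+\varepsilon_j-\varepsilon_i}.
\end{equation*}
Since $b_k - a_k \in {\Bbb Z}$ and $a_k \notin {\Bbb Z}$, each scalar $b_k$ is nonzero (indeed non-integer), so every nonzero root vector acts injectively. I would then establish irreducibility as follows. Let $N \subset M({\bf a})$ be a nonzero submodule and pick a monomial $x^{\bf b} \in N$. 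For any other admissible monomial $x^{{\bf b}'}$ the difference ${\bf b}' - {\bf b}$ is an integer vector summing to zero, hence expressible as a sum of elementary shifts $\varepsilon_i - \varepsilon_j$. Applying the corresponding operators $x_i \partial_j$ in any order transports $x^{\bf b}$ to a scalar multiple of $x^{{\bf b}'}$; the scalars accumulated along the way are of the form $b_k + m$ with $m \in {\Bbb Z}$, hence nonzero by non-integrality. So $x^{{\bf b}'} \in N$ and $N = M({\bf a})$.

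The only obstacle, and it is a mild one, is the bookkeeping in the reachability step: one must exhibit a concrete sequence of elementary shifts connecting ${\bf b}$ to ${\bf b}'$. Because the exponents are complex numbers (not natural numbers), there are no positivity constraints to respect and any greedy decomposition of ${\bf b}' - {\bf b}$ into positive combinations of $\varepsilon_i - \varepsilon_j$'s works. The hypothesis $a_i \notin {\Bbb Z}$ is precisely what guarantees both the injectivity in torsion-freeness and the nonvanishing of the intermediate coefficients in the reachability argument, and is exactly the condition imposed in \cite{BL}.
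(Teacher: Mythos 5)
Your argument is correct: the weight-space observation (each monomial spans a one-dimensional weight space, distinct monomials have distinct weights because the total degree $\sum b_i$ is fixed), the computation $(x_i\partial_j)\cdot x^{\bf b} = b_j\,x^{{\bf b}+\varepsilon_i-\varepsilon_j}$ with $b_j \equiv a_j \ (\mathrm{mod}\ {\Bbb Z})$ non-integral and hence nonzero, and the reachability step decomposing ${\bf b}'-{\bf b}$ into elementary shifts all go through, and together they give both torsion-freeness and irreducibility. Note, however, that the paper itself offers no proof of this proposition: it is imported verbatim from Britten--Lemire \cite{BL}, where it arises inside a classification of simple $sl(n)$--modules admitting a one-dimensional weight space, proved by more structural methods. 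So your contribution is a short, self-contained, purely combinatorial verification for this explicit family of modules, which is exactly what is needed here; what the citation buys instead is the broader classification statement, of which this family is one instance. The only point worth making explicit in a written version is the one you flagged yourself: to pass from "every element of a submodule $N$ is a sum of monomials" to "each of those monomials lies in $N$" you should invoke the standard separation-by-$\mathfrak{h}$ argument (distinct weights, so a Vandermonde-type argument inside $U(\mathfrak{h})$ isolates each component), and in the reachability step one concrete decomposition (e.g.\ greedily pairing positive and negative entries of ${\bf b}'-{\bf b}$) suffices precisely because, as you say, no positivity constraints on the exponents are present.
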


\begin{proposition}\label{M(a)}
	For each ${\bf a}= (a_1,\ldots, a_5) \in \mathbb{C} ^5$, $M(\bf a)$ is an $A(L_{-5/2} (sl(5)))$--module if and only if $a_1 + a_2 + a_3 + a_4 + a_5 = -5/2$.
\end{proposition}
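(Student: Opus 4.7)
The plan is to prove the two directions separately, both linking $M(\mathbf a)$ to the structure of $L_{-5/2}(sl(5))$.

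For the ``only if'' direction, assume $M(\mathbf a)$ is an $A(L_{-5/2}(sl(5)))$-module. Since $a_i\notin\mathbb{Z}$ forces $M(\mathbf a)$ to be irreducible as an $sl(5)$-module (by \cite{BL}), the center $Z(\mathcal U(sl(5)))$ acts on $M(\mathbf a)$ by a character $\chi_{\mathbf a}$. A direct computation using \eqref{sl(n)-action} yields that the quadratic Casimir acts on $M(\mathbf a)$ as $\tfrac{4}{5}s(s+5)$, where $s:=a_1+\cdots+a_5$; this matches the eigenvalue $(s\omega_1,s\omega_1+2\rho)$ of the Casimir on the Verma module $M(s\omega_1)$. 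Applying the identity $\sum x_{i_1}\cdots x_{i_k}\partial_{i_1}\cdots\partial_{i_k}=E(E-1)\cdots(E-k+1)$ for the Euler operator $E=\sum x_i\partial_i$ to all higher Casimirs $C_k=\mathrm{tr}(E^k)$ confirms $\chi_{\mathbf a}=\chi_{s\omega_1}$. On the other hand, every simple $A(L_{-5/2}(sl(5)))$-module in $\mathcal O$ has central character $\chi_{\lambda_i}$ for some $\lambda_i$ from Proposition~\ref{Prop1}. Comparing Weyl-orbit representatives of $\rho$-shifted weights shows that the only $\lambda_i$ whose central character equals $\chi_{s\omega_1}$ for some $s\in\mathbb{C}$ are $\lambda_1=0$ (giving $s\in\{0,-5\}$) and the cluster containing $\lambda_2=-\tfrac52\omega_1$ (giving $s=-\tfrac52$). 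The candidates $s=0$ and $s=-5$ are ruled out by an associated-variety argument: the cuspidal $M(\mathbf a)$ has associated variety equal to the closure of the minimal nilpotent orbit $\overline{\mathcal O}_{\min}$, while the only simple $A(L_{-5/2}(sl(5)))$-module of central character $\chi_0$ appearing in Proposition~\ref{Prop1} is $L(\lambda_1)=\mathbb{C}$, whose associated variety is $\{0\}$. Hence $s=-\tfrac52$.

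For the ``if'' direction, assume $s=-\tfrac52$. I will realize $M(\mathbf a)$ as the top component of an $L_{-5/2}(sl(5))$-module $\widehat M(\mathbf a)$, whence Proposition~\ref{Zhu1}(1) gives that $M(\mathbf a)$ is an $A(L_{-5/2}(sl(5)))$-module. The natural construction is via a $\beta\gamma$-Wakimoto realization: the currents of $V^{-5/2}(sl(5))$ are expressed as normally-ordered products of bosonic ghosts $\beta_i,\gamma_i$ (for $i=1,\ldots,5$) together with a Heisenberg contribution, producing a vertex algebra homomorphism $V^{-5/2}(sl(5))\to\mathcal F$. The twisted Fock module $\mathcal F_{\mathbf a}$ with ground state $x_1^{a_1}\cdots x_5^{a_5}$ is a $V^{-5/2}(sl(5))$-module whose top component recovers $M(\mathbf a)$, and the constraint $s=-\tfrac52$ matches the Heisenberg charge of $\mathcal F_{\mathbf a}$ with the level $k=-\tfrac52$, ensuring that the maximal ideal of $V^{-5/2}(sl(5))$ annihilates $\mathcal F_{\mathbf a}$ and so $\mathcal F_{\mathbf a}$ descends to an $L_{-5/2}(sl(5))$-module.

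The main obstacle is the precise setup of the Wakimoto realization, together with the exclusion of the spurious candidates $s=0,-5$ in the ``only if'' direction. A purely algebraic alternative, bypassing free fields altogether, is to apply the explicit generator $v'=F([v])\in\mathcal U(sl(5))$ of the defining ideal of $A(L_{-5/2}(sl(5)))$ (which at admissible level is available via the Malikov--Feigin--Fuks singular vector formula) directly to $M(\mathbf a)$ via \eqref{sl(n)-action}, and verify by Weyl-algebra manipulation that its action is zero if and only if $s=-\tfrac52$; this approach proves both directions simultaneously but at the cost of a heavier computation.
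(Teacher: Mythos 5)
Your primary route is genuinely different from the paper's, but as written it has real gaps. In the ``if'' direction you never actually construct the module: the assertion that matching the Heisenberg charge of the twisted Fock module with the level ``ensures that the maximal ideal of $V^{-5/2}(sl(5))$ annihilates $\mathcal F_{\bf a}$'' is a non sequitur --- descent of a free-field module to the simple quotient is exactly the hard point, and you acknowledge yourself that the setup of the Wakimoto realization is an obstacle. (Note also that the naive $\beta\gamma$-realization of $\widehat{sl(5)}$ on five Weyl pairs lives at level $-1$, not $-5/2$, so some parabolic/twisted version would be needed and its existence at this level is not free.) In the ``only if'' direction there is a subtler gap: $M({\bf a})$ is cuspidal, not in category $\mathcal O$, so Proposition \ref{Prop1} does not by itself tell you that its central character must be one of the $\chi_{\lambda_i}$; you need something like Duflo's theorem (the annihilator of the simple $M({\bf a})$ is primitive, hence the annihilator of some simple highest weight module, which must then be one of the $V(\lambda_i)$) to transfer the constraint. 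This is patchable, but it is a missing step, and it also quietly uses $a_i\notin\mathbb Z$, whereas the proposition is stated for all ${\bf a}\in\mathbb C^5$.

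The ``purely algebraic alternative'' you relegate to a final remark is in fact the paper's proof, and it is far lighter than you anticipate. Since $k=-5/2$ is admissible for $\widehat{sl(5)}$, the maximal ideal of $V^{-5/2}(sl(5))$ is generated by a single explicit singular vector of conformal weight $2$ (from \cite{Pe-07}); its Zhu image $u'$ is a quadratic element of $\mathcal U(sl(5))$, and under the map \eqref{sl(n)-action} it acts on $M({\bf a})$ as
\begin{equation*}
\tfrac{3}{5}\,x_1\partial_5\Bigl(\textstyle\sum_{i=1}^{5}x_i\partial_i+\tfrac{5}{2}\Bigr),
\end{equation*}
so $u'\,x^{\bf b}=\tfrac{3}{5}b_5\bigl(\sum_i b_i+\tfrac52\bigr)x^{{\bf b}'}$ and the condition $\sum_i a_i=-\tfrac52$ drops out immediately, proving both directions at once for arbitrary ${\bf a}$. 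I would recommend promoting that computation from fallback to main argument.
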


\begin{proof}
	It follows from \cite{Pe-07} that the vector
	\begin{align*}
	u&=(\frac{3}{5} h_{\varepsilon_1-\varepsilon_2}(-1) e_{\theta}(-1) + \frac{1}{5} h_{\varepsilon_2-\varepsilon_3}(-1) e_{\theta}(-1) - \frac{1}{5} h_{\varepsilon_3-\varepsilon_4}(-1) e_{\theta}(-1) - \frac{3}{5} h_{\varepsilon_4-\varepsilon_5}(-1) e_{\theta}(-1) +\\ 
	&{  +}  \left( e_{\varepsilon_1-\varepsilon_2}(-1) e_{\varepsilon_2-\varepsilon_5}(-1) + e_{\varepsilon_1-\varepsilon_3}(-1) e_{\varepsilon_3-\varepsilon_5}(-1) + e_{\varepsilon_1-\varepsilon_4}(-1) e_{\varepsilon_4-\varepsilon_5}(-1) \right) - \frac{3}{2}e_{\theta}(-2))\mathbf{1}
	\end{align*}
	is a singular vector in $V^{-5/2}(sl(5))$, which generates the maximal ideal.
	The projection of $u$ in Zhu's algebra is a vector $u'= F([u])$ given by the following formula:
	\begin{align} \label{v'-sl5}
	u'&=\frac{3}{5} e_{\varepsilon_1-\varepsilon_5} h_{\varepsilon_1-\varepsilon_2} +  \frac{1}{5} e_{\varepsilon_1-\varepsilon_5} h_{\varepsilon_2-\varepsilon_3} - \frac{1}{5} e_{\varepsilon_1-\varepsilon_5} h_{\varepsilon_3-\varepsilon_4} - \frac{3}{5} e_{\varepsilon_1-\varepsilon_5} h_{\varepsilon_4-\varepsilon_5} +  \\ & +\left(e_{\varepsilon_2-\varepsilon_5} e_{\varepsilon_1-\varepsilon_2} + e_{\varepsilon_3-\varepsilon_5} e_{\varepsilon_1-\varepsilon_3} + e_{\varepsilon_4-\varepsilon_5} e_{\varepsilon_1-\varepsilon_4}  \right)+ \frac{3}{2}e_{\varepsilon_1-\varepsilon_5} \in \mathcal{U}(sl(5)). \nonumber
	\end{align}
	Clearly, $M(\bf a)$ is an $A(L_{-5/2} (sl(5)))$--module if and only if $u'$ annihilates $M(\bf a)$. Using formula (\ref{v'-sl5}), and the definition of $sl(5)$--action 
	(\ref{sl(n)-action}), we obtain that $u'$ acts on $M(\bf a)$ as
	\begin{equation} \label{action-v'}
	 \frac{3}{5}  x_1 \partial_ 5  \left (     x_1 \partial  _1   +   x_2 \partial  _2 
	+   x_3 \partial  _3 
	+ x_4 \partial  _4 
	+  x_5 \partial  _5 + \frac{5}{2}  \right).  	\end{equation}
	Let $x^{\bf b} = x_1 ^{b_1} \cdots x_5 ^{b_5}$ be an arbitrary element of $M(\bf a)$. 
	Relation (\ref{action-v'}) now implies that
	$$ u'.x^{\bf b} = \frac{3}{5}  b_5 (b_ 1 + b_2 + b_3 + b_4 + b_5 + 5/2 )x^{\bf b'},$$
	where we denoted 
	$$x^{\bf b'} = x_1 ^{b_1+1}  x_2 ^{b_2}  x_3 ^{b_3}  x_4 ^{b_4} x_5 ^{b_5-1}.$$
	The claim of Proposition now follows from the relation $ \sum_{i=1} ^5  (b_i - a_i)  = 0 $.
\end{proof}

\begin{theorem}
	For any $t \in {\Bbb C} $, the irreducible  $L_{-5/2} (sl(4))$--module $L_{-5/2} (t \omega_1)$ is a subquotient of an $L_{-5/2} (sl(5))$--module $\widehat{ M({\bf a})}$ such that  $\widehat{ M({\bf a})}(0) \cong  M({\bf a})$, for  ${\bf a} = ( t, 0, 0, 0, -t - \frac{5}{2})$.	
\end{theorem}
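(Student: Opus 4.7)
The plan is to combine Proposition~\ref{M(a)}, the Zhu induction of Proposition~\ref{Zhu1}(2), and the conformal embedding $L_{-5/2}(sl(4)) \otimes M_c(1) \hookrightarrow L_{-5/2}(sl(5))$ from Proposition~\ref{konf-embedding}. First I would note that ${\bf a} = (t, 0, 0, 0, -t - \tfrac{5}{2})$ satisfies $\sum_{i=1}^{5} a_i = -\tfrac{5}{2}$, so by Proposition~\ref{M(a)}, $M({\bf a})$ carries the structure of an $A(L_{-5/2}(sl(5)))$--module. Proposition~\ref{Zhu1}(2) then produces a $\mathbb{Z}_{+}$--graded $L_{-5/2}(sl(5))$--module $\widehat{M({\bf a})} = \bigoplus_{n \geq 0} \widehat{M({\bf a})}(n)$ with $\widehat{M({\bf a})}(0) \cong M({\bf a})$ as $A(L_{-5/2}(sl(5)))$--modules; concretely, one can take $\widehat{M({\bf a})}$ to be the generalized Verma type module induced from $M({\bf a})$.

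Next, I would exhibit the required $\widehat{sl(4)}$--highest weight vector inside the top component. Consider
\[ v := x_1^{t}\, x_5^{-t - 5/2} \in M({\bf a}), \]
which is a well-defined basis element since the exponents $b_i$ satisfy $b_i - a_i = 0$ for every $i$ and $\sum_i (b_i - a_i) = 0$. Using the formulas $e_{\varepsilon_i - \varepsilon_j} = x_i \partial_j$ and $h_{\varepsilon_i - \varepsilon_{i+1}} = x_i \partial_i - x_{i+1} \partial_{i+1}$ from (\ref{sl(n)-action}), one checks that $e_{\varepsilon_i - \varepsilon_j}. v = 0$ for all $1 \leq i < j \leq 4$ (because the exponents of $x_2, x_3, x_4$ in $v$ vanish), and that $v$ has $sl(4)$--weight $t\,\omega_1$. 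Since $v$ lies in the top component $\widehat{M({\bf a})}(0)$, the loop part $sl(4) \otimes t\,\mathbb{C}[t]$ annihilates $v$ automatically in the $\widehat{sl(4)}$--module $\widehat{M({\bf a})}$, so $v$ is an $\widehat{sl(4)}$--highest weight vector of weight $t\,\omega_1$.

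Finally, I would restrict the $L_{-5/2}(sl(5))$--action along the conformal embedding and consider the $\widehat{sl(4)}$--submodule $\mathcal{U}(\widehat{sl(4)}).v \subseteq \widehat{M({\bf a})}$. Since $L_{-5/2}(sl(4))$ sits inside $L_{-5/2}(sl(5))$ as a vertex subalgebra (Proposition~\ref{konf-embedding}), this action factors through the simple vertex algebra $L_{-5/2}(sl(4))$, and $\mathcal{U}(\widehat{sl(4)}).v$ becomes an $L_{-5/2}(sl(4))$--highest weight module of weight $t\,\omega_1$. Its unique irreducible quotient is therefore $L_{-5/2}(t\,\omega_1)$, exhibiting this module as a subquotient of $\widehat{M({\bf a})}$.

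The main subtlety I expect is that $\widehat{M({\bf a})}$ need not be irreducible as an $L_{-5/2}(sl(5))$--module for a general $t \in \mathbb{C}$, so the stronger Proposition~\ref{singular} does not apply directly; this is precisely why the claim must be phrased at the level of subquotients rather than submodules. A minor additional point is to verify that the $\widehat{sl(4)}$--action inherited by restriction genuinely descends to $L_{-5/2}(sl(4))$ and not merely to $V^{-5/2}(sl(4))$; but this is immediate from the conformal embedding, since the ideal killed by the action on $L_{-5/2}(sl(5))$ contains the image of the maximal ideal of $V^{-5/2}(sl(4))$ under Proposition~\ref{konf-embedding}, together with Theorem~\ref{main-max}.
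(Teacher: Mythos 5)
Your proposal is correct and follows essentially the same route as the paper: checking the sum condition so that Proposition \ref{M(a)} applies, inducing via Proposition \ref{Zhu1}(2), exhibiting $x_1^{t}x_5^{-t-5/2}$ as an $\widehat{sl(4)}$--singular vector of weight $t\omega_1$ in the top level, and using the conformal embedding to conclude that the cyclic $\widehat{sl(4)}$--module it generates is a highest weight $L_{-5/2}(sl(4))$--module whose simple quotient is $L_{-5/2}(t\omega_1)$. Your extra remarks (why the action descends to the simple quotient $L_{-5/2}(sl(4))$, and why only a subquotient statement is possible) just make explicit points the paper leaves implicit.
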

\begin{proof}
	 Proposition \ref{M(a)} implies that $M({\bf a})$ is an $ A(L_{-5/2} (sl(5)))$--module for  ${\bf a} = ( t, 0, 0, 0, -t - \frac{5}{2})$, $t \in \mathbb{C}$.
	 The Proposition \ref{Zhu1} (2) now implies that there exists an $L_{-5/2} (sl(5))$--module $\widehat{ M({\bf a})}$ such that  $\widehat{ M({\bf a})}(0) \cong  M({\bf a})$.
	 Vector 
	 $$v_t := x_1 ^t x_5^{-t - \tfrac{5}{2}}$$
	 is a singular vector for $\widehat{sl(4)}$ of $sl(4)$--weight $t \omega_1$.
	 Since $L_{-5/2} (sl(4)) \otimes M_c(1)$ is conformally embedded in $L_{-5/2} (sl(5))$, we obtain that $ \widetilde L_{-5/2}(t \omega_1)=\mathcal{U}(\widehat{ sl(4)}).v_t$
	 is an $L_{-5/2}(sl(4))$--module. Clearly, this implies that $L_{-5/2}(t \omega_1)$
     is a subquotient of $\widehat{ M({\bf a})}$, since it is a quotient of 
     $\widetilde L_{-5/2}(t \omega_1)$.
	 \end{proof}
The next remark shows that there exist indecomposable $L_{-5/2}(sl(4))$--modules in the category~$\mathcal O$. 
\begin{remark} \label{remark-ind}
Consider the singular vector $v_t$ for $t\in {\Bbb Z}_{\ge 0}$. Then  we see that:
\begin{itemize}
\item $ \widetilde U_t:=  \mathcal{U}(sl(4)) v_{t} $ is an infinite-dimensional highest weight  $sl(4)$--module.
\item The highest weight of $ \widetilde U_t$ is $\mu_1(t) =t \omega_1$,  so it is dominant integral. Therefore, $ \widetilde U_t$ is an indecomposable $A( L_{-5/2}(sl(4)))$--module.
\item $\widetilde L_{-5/2}(t \omega_1)$ is indecomposable $L_{-5/2}(sl(4))$--module.
\end{itemize}
\end{remark}

 \section*{Appendix: OPE for $W^k(sl(4), f_{subreg})$}
 Recall that the vertex algebra $W^k(sl(4), f_{subreg})$, obtained by the quantum Hamiltonian reduction,  is generated by five fields $J, \bar L, W, G^+, G^-$ of conformal weights $1,2,3,1,3$. We choose the new   Virasoro field   
 $ L = \bar L - \partial J$ so that $G^{\pm}$ have conformal weight $2$.
Here we recall the OPE for $W^k(sl(4), f_{subreg})$ from the paper \cite{CL}.

 \begin{align*}
L(z)L(w) &\sim - \frac{(8+3k)(17+8k)}{2(4+k)}(z-w)^{-4} + 2L(w)(z-w)^{-2} + \partial L(w)(z-w)^{-1},\\
L(z)J(w) &\sim J(w)(z-w)^{-2} + \partial J(w)(z-w)^{-1},\\
L(z)W(w) &\sim 3W(w)(z-w)^{-2} + \partial W(w)(z-w)^{-1},\\
L(z)G^{\pm}(w) &\sim 2G^{\pm}(w)(z-w)^{-2} +\partial G^{\pm}(w)(z-w)^{-1},\\
J(z)J(w) &\sim (2+\frac{3k}{4})(z-w)^{-2},\\
J(z)G^{\pm}(w) &\sim \pm G^{\pm}(w)(z-w)^{-1},\\
\ & \\
W(z)G^{\pm}(w) &\sim \pm \frac{2(4+k)(7+3k)(16+5k)}{(8+3k)^2}G^{\pm}(w)(z-w)^{-3} \\
&+  \left( \pm \frac{3(4+k)(16+5k)}{2(8+3k)} \partial G^{\pm} - \frac{6(4+k)(16+5k)}{(8+3k)^2} : J G^{\pm}: \right) (w)(z-w)^{-2}\\
&+ \left( - \frac{8(3+k)(4+k)}{(2+k)(8+3k)} : J \partial G^{\pm} : - \frac{4(4+k)(16+15k+3k^2)}{(2+k)(8+3k)^2} : (\partial J) G^{\pm}: \right. \\
&\pm \frac{(3+k)(4+k)}{2+k} \partial ^2 G^{\pm} \mp \frac{2(4+k)^2}{(2+k)(8+3k)} :LG^{\pm}: \\
&\pm \left.  \frac{4(4+k)(16+5k)}{(2+k)(8+3k)^2} :JJG^{\pm}: \right) (w)(z-w)^{-1},\\
\ & \\
G^{+}(z)G^{-}(w)&\sim (2+k)(5+2k)(8+3k)(z-w)^{-4} + 4(2+k)(5+2k)J(w)(z-w)^{-3}\\
&+  \bigg( -(2+k)(4+k)L + 6(2+k):JJ: +2(2+k)(5+2k)\partial J \bigg) (w)(z-w)^{-2}\\
&+ \left( (k+2)W + \frac{8(2+k)(32+11k)}{3(8+3k)^2}:JJJ:- \frac{4(2+k)(4+k)}{8+3k}:LJ:+6(2+k):(\partial J)J: \right.\\
&-\frac{1}{2}(2+k)(4+k)\partial L + \left. \frac{4(2+k)(26+17k+3k^2)}{3(8+3k)}\partial^2 J \right)(w)(z-w)^{-1},\\
\ & \\
W(z)W(w) &\sim \frac{2(k+4)(2k+5)(3k+7)(5k+16)}{3k+8}(z-w)^{-6}+ \cdots.
\end{align*}
The remaining terms in the OPE of $W(z)W(w)$ can be found in the paper \cite{FS}.

\end{document}